\definecolor{orcidlogocol}{HTML}{A6CE39}
\numberwithin{equation}{section}
\newtheorem{theorem}{Theorem}[section]
\newtheorem{lemma}[theorem]{Lemma}
\newtheorem{proposition}[theorem]{Proposition}
\theoremstyle{definition}  
\newtheorem{definition}[theorem]{Definition}
\newtheorem{remark}[theorem]{Remark}
\numberwithin{equation}{section}
\numberwithin{theorem}{section}
\pgfplotsset{compat=1.18}
\begin{document}
{\LARGE{\title{{\bf Asymptotic Properties of a Forward-Backward-Forward Differential Equation and Its Discrete Version for Solving Quasimonotone Variational Inequalities}}}}

\author[1]{Yeyu Zhang$^{\orcidlink{0000-0001-7974-9636}}$}
\author[1]{Hongwei Liu\thanks{Corresponding author: hwliuxidian@163.com}}

\affil[1]{\small{School of Mathematics and Statistics, Xidian University,
\protect\\ Shaanxi, 710126, China\protect\\
{zyy835858276@163.com}, {hwliuxidian@163.com}}}

\date{}
\maketitle

\begin{abstract}
This paper investigates the asymptotic behavior of a forward-backward-forward (FBF) type differential equation and its discrete counterpart for solving quasimonotone variational inequalities (VIs). Building on recent continuous-time dynamical system frameworks for VIs, we extend these methods to accommodate quasimonotone operators. We establish weak and strong convergence under significantly relaxed conditions, without requiring strong pseudomonotonicity or sequential weak-to-weak continuity. Additionally, we prove ergodic convergence of the continuous trajectories, offering further insight into the long-term stability of the system. In the discrete setting, we propose a novel Bregman-type algorithm that incorporates a nonmonotone adaptive step-size rule based on the golden ratio technique. A key contribution of this work is demonstrating that the proposed method ensures strong convergence under the assumption of uniform continuity of the operator, thereby relaxing the standard Lipschitz continuity requirement prevalent in existing methods. Numerical experiments, including infinite-dimensional and non-Lipschitz cases, are presented to illustrate the improved convergence and broader applicability of the proposed approach.

\end{abstract}

\vspace{2mm}
\noindent
\textbf{Keywords:} Forward-backward-forward dynamical system
; Bregman algorithm; Quasimonotone variational inequality; Non-Euclidean adaptive method; Adaptive stepsize.\\[1mm]
\noindent
\textbf{2020 Mathematics Subject Classification:} 34G25; 47J20; 49J40; 65K15.

\section{Introduction}
\quad Variational inequalities (VIs) were originally introduced by G. Stampacchia in the early 1960s \cite{Stampacchia1964} as a tool for analyzing elliptic partial differential equations. Since their inception, VIs have evolved into a fundamental framework in optimization theory and applied mathematics.  
They naturally emerge across a broad spectrum of applications, such as equilibrium modeling in economics and game theory \cite{FacchineiPang2003}, traffic network analysis \cite{Nagurney1993}, studies of electricity markets \cite{Bertsekas1999}, and machine learning problems involving monotone operator theory \cite{BauschkeCombettes2011}.
\vskip 2mm
The classical formulation of a variational inequality problem in a Hilbert space \(\mathcal{H}\) reads as follows:
\begin{equation}\label{1.1}
    \text{Find } x^* \in K \subset\mathcal{H} \text{ such that } \langle F(x^*), y - x^* \rangle \geq 0, \quad \forall y \in K,
\end{equation}
where \( F: \mathcal{H} \to\mathcal{H}\) is a given continuous operator, \( K \subset\mathcal{H} \) is a nonempty closed convex subset, and \( \langle \cdot, \cdot \rangle \) denotes the inner product in \( \mathcal{H} \). This problem is referred to as \( VI(F, K) \), and we denote its solution set by \( \Omega \).
\vskip 2mm
Classical approaches to variational inequalities mainly rely on discrete iterative schemes. Among them, the projection gradient (PG) method \cite{a6} is one of the most widely used algorithms. To ensure the convergence of the PG method, it is typically required that the operator \( F \) be strongly monotone and cocoercive, along with being \( L \)-Lipschitz continuous over \( K \). As demonstrated in \cite{FacchineiPang2003}, if \( F \) is merely monotone and \( L \)-Lipschitz continuous, the sequence generated by the PG method may fail to converge.
\vskip 2mm
To address the limitation of requiring strong monotonicity, various projection-based methods have been proposed for solving monotone VIs; see, e.g., \cite{a7,a8,a9,a10,a11,a12}. Among these, a notable algorithm is the extragradient method developed by Korpelevich \cite{a11}, which is designed for finite-dimensional spaces under the assumptions that \( F \) is monotone and Lipschitz continuous. The extragradient method involves two projections onto the feasible set \( K \) per iteration. However, when \( K \) has a complex structure, evaluating these projections can become challenging or may lack an explicit closed-form expression.
To address this drawback, Censor et al.\ \cite{a12} introduced the subgradient extragradient method for solving monotone and Lipschitz continuous variational inequalities. In contrast to the classical extragradient method, which requires two projections onto the feasible set \( K \), their approach replaces the second projection with a projection onto a suitably constructed half-space \( T_n \). This modification not only preserves the convergence properties but also significantly reduces the computational burden when projections onto \( K \) are difficult to compute. Another important development is the forward-backward-forward (FBF) method proposed by Tseng in \cite{a13}. In the FBF method, the second projection step in the extragradient algorithm is replaced by an explicit forward computation involving evaluations of the operator \( F \). Similar to the extragradient framework, the FBF method also assumes that \( F \) is monotone and Lipschitz continuous. The advantage of the FBF method lies in its ability to maintain the convergence guarantees while potentially simplifying the implementation compared to methods requiring multiple projections.
In recent years, the development of algorithms for nonmonotone variational inequalities has gained considerable attention, particularly due to their applications in machine learning, see \cite{a14,a15,a16,a17,a31,a32}. Consequently, the above mentioned method  were later used to solve VIs with the cost operators being pseudomonotone and quasimonotone. 
\vskip 2mm 
In addition to discrete iterative schemes, continuous dynamical systems have been developed as powerful tools to analyze and solve variational inequalities. Motivated by the pioneering work of Bruck \cite{Bruck1975} on the asymptotic behavior of nonlinear semigroups associated with maximal monotone operators, continuous-time approaches offer a complementary perspective to discrete algorithms.
\vskip 2mm 
In particular, a projected dynamical system has been proposed and extensively studied for solving monotone variational inequalities. The system is given by
\begin{equation}\label{1.2}
    \dot{x}(t) + x(t) - P_K\left(x(t) - \lambda F(x(t))\right) = 0, \quad t \geq 0,
\end{equation}
where \( \lambda > 0 \) is a fixed parameter, \( P_K \) denotes the projection onto the closed convex set \( K \), and \( F: \mathcal{H} \to \mathcal{H}\) is a monotone and Lipschitz continuous operator. This continuous formulation can be interpreted as the continuous limit of the classical projection gradient method and has been systematically investigated in \cite{Alvarez2000, AttouchCzarnecki2011}. The projected dynamical system \eqref{1.2} exhibits several desirable properties under suitable assumptions on \( F \) and \( K \), such as the existence and uniqueness of global solutions, boundedness of trajectories, and convergence of the trajectories to a solution of the variational inequality problem \( VI(F, K) \). These properties make continuous dynamical systems a valuable framework for both theoretical analysis and algorithmic development.
\vskip 2mm 
Recently, Bot et al.\cite{a22} adopted a continuous-time perspective to investigate the solution set of \( VI(F, K) \) by studying the trajectories generated by a forward-backward-forward-type dynamical system, specifically designed for solving pseudomonotone variational inequalities:
\begin{equation} \label{4}
    \begin{cases}
y(t) = P_K(x(t) - \lambda F(x(t))) \\ 
\dot{x}(t) + x(t) = y(t) + \lambda \big[F(x(t)) - F(y(t))\big] \\ 
x(0) = x_0, 
\end{cases}
\end{equation}
The formulation of \eqref{4} traces back to the work of Banert and Bot \cite{a23}, where the continuous counterpart of Tseng’s algorithm was investigated in the broader framework of monotone inclusion problems. The existence and uniqueness of the trajectory \( x(t) \) generated by \eqref{4} were established therein as a consequence of the global Cauchy--Lipschitz theorem, utilizing the Lipschitz continuity of the operator \( F \).    
Under the assumptions that the operator $F$ is pseudo-monotone, sequentially weak-to-weak continuous, and $L$-Lipschitz continuous with constant $L>0$, and that the parameter $\lambda$ satisfies $0<\lambda<\frac{1}{L}$, it has been shown that the trajectories $x(t)$ and $y(t)$ generated by \eqref{4} converge weakly to a solution of $VI(F,K)$ as $t\to\infty$ in real Hilbert spaces. Furthermore, if $F$ is additionally strongly pseudo-monotone on $K$, then the trajectories exhibit exponential convergence to the unique solution of $VI(F,K)$. 
In addition, an explicit time discretization of the continuous system leads to Tseng's forward-backward-forward algorithm with relaxation parameters. It is also established that the sequence $\{x_n\}$ generated by the discrete scheme converges weakly to a solution of $VI(F,K)$, and linear convergence is achieved under the strong pseudo-monotonicity condition.
\vskip 2mm 
What we need to emphasize is that research on pseudomonotone VIs rely on Minty conditions, that is, let \( \Omega_{D} \) represent the solution set of the following problem, which seeks to find \( x^* \in K \) such that
\begin{equation}
    \langle F(y), y - x^* \rangle \geq 0, \quad \forall y \in K.
\end{equation}
Clearly, \( \Omega_D \) is a closed and convex set, although it may be empty. When \( F \) is a continuous mapping and \( K \) is convex, we have the inclusion \( \Omega_D \subset \Omega \). Moreover, the equality \( \Omega_D = \Omega \) holds if \( F \) is both continuous and pseudomonotone. However, it is crucial to emphasize that the reverse inclusion \( \Omega \subset \Omega_D \) does not generally hold when \( F \) is quasimonotone and continuous. In other words, a solution \( x^* \) of the variational inequality problem \(VI(F, K) \) is not necessarily contained in \( \Omega_D \). This discrepancy introduces a fundamental challenge: in the general setting where the solution \( x^* \in \Omega \), how can one design effective algorithms and identify suitable conditions to ensure convergence or feasibility with respect to \( \Omega_D \). 
\vskip 2mm 
Throughout this paper, we further investigate forward-backward-forward (FBF) type dynamical systems for solving variational inequality problems. Compared with the foundational results of Bot et al.~\cite{a23}, our work offers a broader framework under relaxed assumptions, leading to enhanced convergence properties and wider applicability. The specific contributions of this paper are as follows:

\vskip 2mm
$\bullet$ We extend the FBF-type dynamical system \eqref{4} to address quasimonotone variational inequalities. Beyond the results of Bot et al.~\cite{a23}, we establish not only the weak convergence of the trajectories $x(t)$ but also their strong convergence under weaker and more practical assumptions, notably without requiring strong pseudomonotonicity or sequential weak-to-weak continuity of $F$. Building on the strong convergence results, we further prove the ergodic (time-averaged) convergence of the trajectories, providing a refined understanding of the system's asymptotic behavior.

\vskip 2mm
$\bullet$ We investigate the time discretization of the dynamical system \eqref{4} and study the iterative approximation of solutions to quasimonotone variational inequalities in reflexive Banach spaces, extending beyond the traditional Hilbert space setting. By utilizing the Bregman distance, Bregman projections, and a golden ratio-based technique (see \cite{a26}), we establish strong convergence of the proposed algorithm under suitable conditions. Furthermore, we introduce a nonmonotone adaptive step-size strategy that eliminates the need for Lipschitz continuity of the operator $F$, relaxing it to uniform continuity, thereby significantly weakening the assumptions typically required and broadening the algorithm's applicability.

\vskip 2mm
$\bullet$ To demonstrate the practical efficiency and robustness of the proposed algorithm, we conduct numerical experiments on representative quasimonotone variational inequality problems, including examples in infinite-dimensional Hilbert spaces and traffic equilibrium models. The results show that our method converges reliably under weaker assumptions and outperforms several benchmark algorithms in terms of iteration count and solution accuracy.

\vskip 2mm 
The rest of the paper is organized as follows. In Section~\ref{sec:prelim}, we introduce the necessary preliminaries and notations. Section~\ref{sec:dynsys} presents the asymptotic analysis of a forward-backward-forward-type dynamical system for quasimonotone variational inequalities. In Section~\ref{sec:algorithm}, we propose a discrete forward-backward-forward algorithm incorporating the golden ratio and relaxation parameters, along with its convergence analysis. Section~\ref{sec:experiments} reports numerical experiments that illustrate the effectiveness and advantages of the proposed method.

\section{Preliminaries}\label{sec:prelim}
\begin{definition}\label{def:monotonicity}
Let \( \mathcal{H} \) be a Hilbert space and let \( F : \mathcal{H} \to \mathcal{H} \) be an operator. Then:

\begin{itemize}
    \item[(i)] \( F \) is said to be \textbf{monotone} on \( \mathcal{H} \) if
    \[
    \langle F(\xi) - F(\eta), \xi - \eta \rangle \geq 0, \quad \forall \xi, \eta \in \mathcal{H}.
    \]

    \item[(ii)] \( F \) is said to be \textbf{strongly monotone} on \( \mathcal{H} \) with parameter \( \mu > 0 \) if
    \[
    \langle F(\xi) - F(\eta), \xi - \eta \rangle \geq \mu \|\xi - \eta\|^2, \quad \forall \xi, \eta \in \mathcal{H}.
    \]

    \item[(iii)] \( F \) is called \textbf{pseudomonotone} on a subset \( K \subseteq \mathcal{H} \) if, for all \( \xi, \eta \in K \),
    \[
    \langle F(\xi), \eta - \xi \rangle \geq 0 \ \Rightarrow\ \langle F(\eta), \eta - \xi \rangle \geq 0.
    \]

    \item[(iv)] \( F \) is said to be \textbf{strongly pseudomonotone} on \( K \subseteq \mathcal{H} \) with parameter \( \mu > 0 \) if, for all \( \xi, \eta \in K \),
    \[
    \langle F(\xi), \eta - \xi \rangle \geq 0 \ \Rightarrow\ \langle F(\eta), \eta - \xi \rangle \geq \mu \|\eta - \xi\|^2.
    \]

    \item[(v)] \( F \) is called \textbf{quasimonotone} on \( K \subseteq \mathcal{H} \) if, for all \( \xi, \eta \in K \),
    \[
    \langle F(\xi), \eta - \xi \rangle > 0 \ \Rightarrow\ \langle F(\eta), \eta - \xi \rangle \geq 0.
    \]

    \item[(vi)] \( F \) is said to be \textbf{\( L \)-Lipschitz continuous} if there exists a constant \( L > 0 \) such that
    \[
    \|F(\xi) - F(\eta)\| \leq L \|\xi - \eta\|, \quad \forall \xi, \eta \in \mathcal{H}.
    \]

    \item[(vii)] \( F \) is said to be \textbf{uniformly continuous} if, for every \( \epsilon > 0 \), there exists a constant \( \delta = \delta(\epsilon) > 0 \) such that
    \[
    \|\xi - \eta\| < \delta \ \Rightarrow\ \|F(\xi) - F(\eta)\| < \epsilon, \quad \forall \xi, \eta \in \mathcal{H}.
    \]
\end{itemize}
\end{definition}

\begin{lemma}\cite{a27}\label{l2.2}
    If the operator $F:K\to E$ is uniformly continuous if and only if,  for all $\epsilon>0$, there exists a constant $M>0$, such that 
    \begin{equation*}
        \|Fx-Fy\|\leq M\|x-y\|+\epsilon, \forall x,y\in K.
    \end{equation*}
\end{lemma}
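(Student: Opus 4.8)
The plan is to prove the two implications separately, with essentially all of the content in the forward direction. Throughout I will use that $K$ is convex — the standing assumption on the feasible set — so that for any $x,y\in K$ the segment $\{x+t(y-x):t\in[0,1]\}$ lies in $K$ and $F$ may be evaluated along it; this is the only structural property of $K$ the argument needs.

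For the forward implication (uniform continuity $\Rightarrow$ the affine estimate), I would fix $\epsilon>0$, invoke uniform continuity to obtain $\delta=\delta(\epsilon)>0$ with $\|Fx-Fy\|<\epsilon$ whenever $\|x-y\|<\delta$, and then split into two cases for arbitrary $x,y\in K$. If $\|x-y\|<\delta$, the desired bound holds for any $M\ge 0$, since its right-hand side already exceeds $\epsilon$. If $\|x-y\|\ge\delta$, I would subdivide the segment joining $x$ and $y$ into $n:=\lfloor\|x-y\|/\delta\rfloor+1$ equal pieces with nodes $z_i:=x+\tfrac{i}{n}(y-x)\in K$ for $i=0,\dots,n$, so that consecutive nodes satisfy $\|z_i-z_{i-1}\|=\|x-y\|/n<\delta$; a chained triangle-inequality estimate then gives $\|Fx-Fy\|\le\sum_{i=1}^{n}\|Fz_i-Fz_{i-1}\|<n\epsilon\le\bigl(\|x-y\|/\delta+1\bigr)\epsilon$, which is the claimed bound with $M:=\epsilon/\delta(\epsilon)$.

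For the converse (the affine estimate $\Rightarrow$ uniform continuity), the argument is short: given $\epsilon>0$, apply the hypothesis with $\epsilon/2$ in place of $\epsilon$ to get $M=M(\epsilon/2)>0$ with $\|Fx-Fy\|\le M\|x-y\|+\epsilon/2$ for all $x,y\in K$, and then set $\delta:=\epsilon/(2M)$; any $x,y\in K$ with $\|x-y\|<\delta$ then satisfy $\|Fx-Fy\|<M\delta+\epsilon/2=\epsilon$, which is exactly uniform continuity.

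The only genuinely non-routine point is in the forward direction: the subdivision must be chosen so that the number of sub-segments $n$ grows at most linearly in $\|x-y\|$ — this is precisely the role of the floor and the $+1$ in $n=\lfloor\|x-y\|/\delta\rfloor+1$ — since that is what upgrades the crude estimate $n\epsilon$ to the required affine form $\tfrac{\epsilon}{\delta}\|x-y\|+\epsilon$. It is also worth noting explicitly that convexity of $K$ is used only to evaluate $F$ at the intermediate nodes $z_i$; if $K$ were merely path-connected the same proof would go through by chaining along a path of controlled length instead of along the segment.
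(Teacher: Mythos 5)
Your proof is correct. The paper does not prove this lemma at all --- it is quoted from Vanderbei's note \cite{a27} (``Uniform continuity is almost Lipschitz continuity'') --- and your chaining argument along the segment, with $n=\lfloor\|x-y\|/\delta\rfloor+1$ subdivisions giving $\|Fx-Fy\|<n\epsilon\le(\epsilon/\delta)\|x-y\|+\epsilon$ and hence $M=\epsilon/\delta(\epsilon)$, is exactly the standard proof of that cited result; the easy converse is also handled correctly. You are also right to flag that convexity (or at least path-connectedness with controlled path length) of $K$ is genuinely needed for the forward direction, a hypothesis the paper satisfies since $K$ is always assumed closed and convex.
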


\begin{definition}
Let \( \mathcal{H} \) be a Hilbert space, and let \( K \subseteq \mathcal{H} \) be a nonempty, closed, and convex subset. The \textbf{metric projection} of a point \( \xi \in \mathcal{H} \) onto \( K \), denoted by \( P_K(\xi) \), is defined as the unique point \( \zeta \in K \) such that
\[
\|\xi - \zeta\| = \inf_{\eta \in K} \|\xi - \eta\|.
\]
The mapping \( P_K : \mathcal{H} \to K \) is called the \textbf{projection operator} onto \( K \).
\end{definition}

\begin{proposition}[The obtuse-angle property]
Let \( \mathcal{H} \) be a Hilbert space, and let \( K \subseteq \mathcal{H} \) be a nonempty, closed, and convex set. Then for any \( \xi \in \mathcal{H} \) and \( \zeta = P_K(\xi) \), the following inequality holds:
\[
\langle \xi - \zeta, \eta - \zeta \rangle \leq 0, \quad \forall \eta \in K.
\]
In other words, the vector from \( \xi \) to its projection \( \zeta \) forms an obtuse angle (or a right angle) with any vector in \( K \) emanating from \( \zeta \).
\end{proposition}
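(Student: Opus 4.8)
The plan is to exploit the variational characterization of $\zeta = P_K(\xi)$ as the minimizer of the function $\omega \mapsto \|\xi - \omega\|$ over the convex set $K$, and to extract the inequality from a first-order perturbation along the segment joining $\zeta$ to an arbitrary point of $K$.

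First I would fix an arbitrary $\eta \in K$ and invoke convexity of $K$: for every $t \in [0,1]$ the point $\zeta_t := (1-t)\zeta + t\eta = \zeta + t(\eta - \zeta)$ again lies in $K$. Since $\zeta$ realizes the infimum $\inf_{\omega \in K}\|\xi - \omega\|$, we have $\|\xi - \zeta\| \leq \|\xi - \zeta_t\|$, and hence $\|\xi - \zeta\|^2 \leq \|\xi - \zeta_t\|^2$, for all such $t$. Next I would expand the right-hand side using the inner-product structure:
\[
\|\xi - \zeta_t\|^2 = \|(\xi - \zeta) - t(\eta - \zeta)\|^2 = \|\xi - \zeta\|^2 - 2t\langle \xi - \zeta,\ \eta - \zeta\rangle + t^2\|\eta - \zeta\|^2 .
\]
Combining this with $\|\xi - \zeta\|^2 \leq \|\xi - \zeta_t\|^2$ and cancelling $\|\xi-\zeta\|^2$ gives $0 \leq -2t\langle \xi - \zeta,\ \eta - \zeta\rangle + t^2\|\eta - \zeta\|^2$ for every $t \in (0,1]$. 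Dividing by $2t > 0$ yields $\langle \xi - \zeta,\ \eta - \zeta\rangle \leq \tfrac{t}{2}\|\eta - \zeta\|^2$, and letting $t \to 0^+$ produces the claimed inequality. Since $\eta \in K$ was arbitrary, this finishes the argument.

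There is no genuine obstacle in this direction: the only point demanding attention is that the perturbed point $\zeta_t$ must stay admissible, which is precisely where convexity of $K$ is used, while the existence and uniqueness of $\zeta$ needed for the statement to be well posed is exactly the content of the preceding definition of the metric projection. For completeness one may note that the converse implication — that any $\zeta \in K$ satisfying the obtuse-angle inequality must coincide with $P_K(\xi)$ — follows from the identity $\|\xi - \eta\|^2 = \|\xi - \zeta\|^2 + 2\langle \xi - \zeta,\ \zeta - \eta\rangle + \|\zeta - \eta\|^2$ by discarding the nonnegative last term, but that direction is not required here.
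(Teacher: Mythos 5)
Your argument is correct and complete: it is the classical first-order perturbation proof of the projection characterization, and every step (convexity giving admissibility of $\zeta_t$, the expansion of $\|\xi-\zeta_t\|^2$, division by $t$ and the limit $t\to 0^+$) is carried out properly. The paper states this proposition as a standard fact without supplying a proof, so there is nothing to compare against; your write-up is exactly the textbook argument one would insert, and the closing remark on the converse direction is a correct (if unneeded) bonus.
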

\begin{lemma}\cite{a28}
\label{lemma:asymptotic_decay}
Let \(1 \leq p < \infty\), \(1 \leq r < \infty\), and let \(A : [0, +\infty) \to [0, +\infty)\) be a locally absolutely continuous function such that \(A \in L^p([0, +\infty))\). Suppose that \(B : [0, +\infty) \to \mathbb{R}\) satisfies \(B \in L^r([0, +\infty))\), and assume that
\[
\frac{d}{dt} A(t) \leq B(t), \quad \text{for almost every } t \in [0, +\infty).
\]
Then it follows that
\[
\lim_{t \to +\infty} A(t) = 0.
\]
\end{lemma}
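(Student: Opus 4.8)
The plan is to argue by contradiction, using the $L^p$‑integrability of the nonnegative function $A$ to control oscillations of the trajectory driven by the differential inequality. A harmless preliminary reduction: if $\|B\|_{L^r}=0$, then $\dot A\le 0$ almost everywhere, so $A$ is nonincreasing, nonnegative and $p$‑integrable, which already forces $A(t)\to 0$; hence I may assume $\|B\|_{L^r}>0$. Next I would record that $\liminf_{t\to\infty}A(t)=0$: if this liminf were some $\ell>0$, then $A(t)\ge \ell/2$ for all large $t$, contradicting $\int_0^{\infty}A^p<\infty$. So there is a sequence $t_n\uparrow\infty$ with $A(t_n)\to 0$. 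Assuming toward a contradiction that $\lim_{t\to\infty}A(t)\neq 0$, we get $\limsup_{t\to\infty}A(t)>0$, hence $\epsilon>0$ and $s_n\uparrow\infty$ with $A(s_n)\ge\epsilon$; discarding finitely many terms and interleaving, I may arrange
\[
t_1<s_1<t_2<s_2<\cdots,\qquad A(t_n)<\tfrac{\epsilon}{2},\qquad A(s_n)\ge\epsilon\quad\text{for all }n.
\]

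Then, for each $n$, I would isolate the last time before $s_n$ at which $A$ reaches the level $\epsilon/2$, namely $\sigma_n:=\sup\{t\in[t_n,s_n]:A(t)\le\epsilon/2\}$. Since $A$ is locally absolutely continuous, hence continuous, one has $t_n\le\sigma_n<s_n$, $A(\sigma_n)=\epsilon/2$, and $A(t)>\epsilon/2$ for every $t\in(\sigma_n,s_n]$. Integrating the differential inequality over $[\sigma_n,s_n]$ (where $\dot A\in L^1_{\mathrm{loc}}$ and $B\in L^r\subseteq L^1_{\mathrm{loc}}$) via the fundamental theorem of calculus for absolutely continuous functions yields
\[
\frac{\epsilon}{2}\;\le\;A(s_n)-A(\sigma_n)\;=\;\int_{\sigma_n}^{s_n}\dot A(\tau)\,d\tau\;\le\;\int_{\sigma_n}^{s_n}B(\tau)\,d\tau\;\le\;\int_{\sigma_n}^{s_n}|B(\tau)|\,d\tau .
\]

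Finally I would extract the contradiction in two cases. If $r=1$, the right‑hand side is at most $\int_{\sigma_n}^{\infty}|B|$, which tends to $0$ since $\sigma_n\ge t_n\to\infty$ and $B\in L^1$; this contradicts the lower bound $\epsilon/2$. If $r>1$, Hölder's inequality gives $\int_{\sigma_n}^{s_n}|B|\le\|B\|_{L^r}\,(s_n-\sigma_n)^{(r-1)/r}$, so $s_n-\sigma_n\ge\delta:=\bigl(\tfrac{\epsilon}{2\|B\|_{L^r}}\bigr)^{r/(r-1)}>0$. Since $\sigma_n>s_{n-1}$, the intervals $(\sigma_n,s_n]$ are pairwise disjoint, each of length at least $\delta$, and $A>\epsilon/2$ on each; summing, $\int_0^{\infty}A^p\ge\sum_n(\epsilon/2)^p\,\delta=+\infty$, contradicting $A\in L^p$. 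In either case the assumption $\limsup_{t\to\infty}A(t)>0$ is untenable, so $\lim_{t\to\infty}A(t)=0$.

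I expect the main obstacle to be the interplay of the two exponents: because $r$ may be as weak as $1$, one cannot hope for a uniform positive lower bound on the lengths of the excursion intervals in that case, and must instead invoke the decay of the tail of $B\in L^1$; for $r>1$, Hölder restores such a length bound and the $L^p$‑integrability of $A$ does the rest. The remaining work is bookkeeping: making the continuity/excursion construction precise and ensuring the selected intervals are disjoint so that their lengths may be summed.
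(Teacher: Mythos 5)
Your proof is correct. Note first that the paper itself offers no proof of this lemma: it is quoted from the cited reference \cite{a28}, so there is no in-paper argument to compare against. Your excursion-interval argument is complete and sound: the reduction to $\|B\|_{L^r}>0$, the observation that $\liminf A=0$ forced by $A\in L^p$, the last-crossing time $\sigma_n$ (well defined and satisfying $A(\sigma_n)\le\epsilon/2$ because the sublevel set is closed by continuity), the integration of $\dot A\le B$ via the fundamental theorem for locally absolutely continuous functions, and the case split on $r=1$ versus $r>1$ all hold up; the disjointness of the intervals $(\sigma_n,s_n]$ follows from the interleaving, so the final divergence of $\int A^p$ is legitimate. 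For comparison, the proof in the cited source is shorter and avoids the contradiction/excursion machinery: from $A(t)\le A(s)+\int_s^t|B|$ for $s\le t$ one gets $A(t)^p\le 2^{p-1}\bigl(A(s)^p+(\int_{t-1}^t|B|)^p\bigr)$, and averaging over $s\in[t-1,t]$ yields $A(t)^p\le 2^{p-1}\bigl(\int_{t-1}^tA^p+(\int_{t-1}^t|B|)^p\bigr)$, where both terms vanish as $t\to\infty$ because $A^p\in L^1$ and $B\in L^r$ (via H\"older on the unit interval). That route handles all $1\le r<\infty$ uniformly and gives a quantitative bound, while yours is more elementary in spirit but requires the separate treatment of $r=1$, where no uniform lower bound on the excursion lengths is available; both are valid.
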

\vskip 2mm 
Let \( E \) be a real reflexive Banach space with dual \( E^* \). Let \( \Phi: E \to (-\infty, +\infty] \) be a proper, convex, and lower semicontinuous function that is Gâteaux differentiable on the interior of its effective domain, denoted by \( \operatorname{int}(\operatorname{dom} \Phi) \).

\vskip 2mm
The Fenchel conjugate of \( \Phi \), written as \( \Phi^*: E^* \to \mathbb{R} \), is defined via the supremum
\[
\Phi^*(u^*) = \sup_{u \in E} \left\{ \langle u, u^* \rangle - \Phi(u) \right\}.
\]

\vskip 2mm
For any \( x \in \operatorname{int}(\operatorname{dom} \Phi) \) and direction \( y \in E \), the directional derivative of \( \Phi \) at \( x \) along \( y \) is given by
\begin{equation}\label{dir_derivative}
\Phi^{\circ}(x, y) = \lim_{r \to 0^+} \frac{\Phi(x + r y) - \Phi(x)}{r}.
\end{equation}

\vskip 2mm
We say that \( \Phi \) is:
\begin{itemize}
  \item \textbf{Gâteaux differentiable} at \( x \in \operatorname{int}(\operatorname{dom} \Phi) \) if the limit in \eqref{dir_derivative} exists for all directions \( y \in E \). The gradient \( \nabla \Phi(x) \in E^* \) is then defined through the identity
$  \langle \nabla \Phi(x), y \rangle = \Phi^{\circ}(x, y), \quad \forall y \in E$.
  \item \textbf{Fréchet differentiable} at \( x \in \operatorname{int}(\operatorname{dom} \Phi) \) if the directional derivative \eqref{dir_derivative} is attained uniformly for all \( y \in E \) with \( \|y\| = 1 \) as \( r \to 0^+ \).
\end{itemize}

\vskip 2mm
The function \( \Phi: E \to (-\infty, +\infty] \) is referred to as a \textbf{Legendre function} if the following conditions hold:
\begin{enumerate}
  \item \( \Phi \) is Gâteaux differentiable, and \( \operatorname{int}(\operatorname{dom} \Phi) \neq \emptyset \), with $\operatorname{dom} \nabla \Phi = \operatorname{int}(\operatorname{dom} \Phi)$.
  
  \item The conjugate \( \Phi^* \) is also Gâteaux differentiable, and similarly,
 $ \operatorname{int}(\operatorname{dom} \Phi^*) \neq \emptyset, \quad \operatorname{dom} \nabla \Phi^* = \operatorname{int}(\operatorname{dom} \Phi^*)$.

\end{enumerate}

Under these conditions, the gradient mapping \( \nabla \Phi \) is bijective with its inverse given by the gradient of the conjugate:
\[
(\nabla \Phi)^{-1} = \nabla \Phi^*.
\]

\vskip 2mm
\begin{definition}
Let \( \Phi: E \to \mathbb{R} \cup \{+\infty\} \) be proper, convex, lower semicontinuous, and Gâteaux differentiable on \( \operatorname{int}(\operatorname{dom} \Phi) \). The associated \textbf{Bregman distance}(or Bregman divergence) \( D_\Phi: \operatorname{dom} \Phi \times \operatorname{int}(\operatorname{dom} \Phi) \to \mathbb{R} \) is defined as
\[
D_\Phi(x, y) = \Phi(x) - \Phi(y) - \langle \nabla \Phi(y), x - y \rangle.
\]
\end{definition}

\vskip 2mm
This Bregman divergence satisfies the identity
\begin{equation}\label{bregman_identity}
D_\Phi(x, y) + D_\Phi(y, z) - D_\Phi(x, z) = \langle \nabla \Phi(z) - \nabla \Phi(y), x - y \rangle,
\end{equation}
for all \( x \in \operatorname{dom} \Phi \) and \( y, z \in \operatorname{int}(\operatorname{dom} \Phi) \). This property is essential for the analysis of convergence in optimization algorithms. If the function $\Phi$ is strongly convex with the strong convexity constant $\rho>0$, then we have $  D_{\Psi}(x,y)\geq \frac{\rho}{2}\|x-y\|^{2}$. In addition, for any $x\in \operatorname{dom} \Phi$, $y,z,w\in \operatorname{int (dom} \Phi)$ and $a\in\mathbb{R}$, such that 
\begin{equation*}
    \nabla \Phi (y) = a\nabla \Phi (z)+(1-a)\nabla\Phi(w), 
\end{equation*}
we have 
\begin{equation}\label{2.3}
    D_{\Phi}(x,y) = a\left(D_{\Phi}(x,z)-D_{\Phi}(y,z)\right)+(1-a)\left(D_{\Phi}(x,w)-D_{\Phi}(y,w)\right). 
\end{equation}

\vskip 2mm
The function \( \Phi \) is said to be \textbf{strongly coercive} if
\[
\lim_{\|x\| \to \infty} \frac{\Phi(x)}{\|x\|} = +\infty.
\]
This condition ensures boundedness of sublevel sets, which is crucial for ensuring the existence of minimizers in variational problems.

\vskip 2mm
Given a closed convex subset \( K \subset E \), the \textbf{Bregman projection} of a point \( x \in \operatorname{int}(\operatorname{dom} \Phi) \) onto \( K \) with respect to \( \Phi \) is defined as the unique solution to
\[
\operatorname{proj}_K^\Phi(x) = \arg\min_{y \in K} D_\Phi(y, x).
\]
Moreover, \( z = \operatorname{proj}_K^\Phi(x) \) if and only if
\begin{equation}\label{proj_vi}
\langle \nabla \Phi(x) - \nabla \Phi(z), y - z \rangle \leq 0, \quad \forall y \in K.
\end{equation}
The following inequality further characterizes the projection
\begin{equation}\label{bregman_projection_ineq}
D_\Phi(y, \operatorname{proj}_K^\Phi(x)) + D_\Phi(\operatorname{proj}_K^\Phi(x), x) \leq D_\Phi(y, x), \quad \forall y \in K.
\end{equation}

\vskip 2mm
Assuming \( \Phi \) is a Legendre function, we define the following quantity
\[
V_\Phi(x, x^*) = \Phi(x) - \langle x, x^* \rangle + \Phi^*(x^*), \quad \forall x \in E, \, x^* \in E^*.
\]
This function satisfies
\[
V_\Phi(x, x^*) = D_\Phi(x, \nabla \Phi^*(x^*)).
\]
Consequently, the convexity of \( D_\Phi \) in its second variable yields the inequality
\begin{equation}\label{bregman_jensen}
D_\Phi\left(z, \nabla \Phi^*\left( \sum_{i=1}^N t_i \nabla \Phi(x_i) \right)\right) \leq \sum_{i=1}^N t_i D_\Phi(z, x_i),
\end{equation}
where \( \{x_i\} \subset E \), \( \{t_i\} \subset (0,1) \), and \( \sum_{i=1}^N t_i = 1 \).

\vskip 2mm
\begin{lemma}\label{l2.7}
Let $\{\Xi_k\}$ and $\{\Lambda_k\}$ be nonnegative sequences. If
\[
\Xi_{k+1} \leq \Xi_k - \Lambda_k + \theta_k \quad \text{for all } k > m,
\]
where $\{\theta_k\}$ is a nonnegative sequence such that $\sum_{k=m+1}^{\infty} \theta_k < \infty$, then the following hold:
\begin{itemize}
    \item[(i)] The sequence $\{\Xi_k\}$ converges;
    \item[(ii)] $\lim_{k \to \infty} \Lambda_k = 0$.
\end{itemize}

Furthermore, if we set $\theta_k := 0$, the same conclusions hold.
\end{lemma}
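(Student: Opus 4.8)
The plan is to reduce the statement to the elementary fact that a nonincreasing sequence bounded below converges, after absorbing the summable perturbation $\{\theta_k\}$ into a shifted sequence. Since $\sum_{k=m+1}^{\infty}\theta_k<\infty$, the tails $S_k:=\sum_{j=k}^{\infty}\theta_j$ are well defined, nonnegative, satisfy $S_k=\theta_k+S_{k+1}$, and $S_k\to 0$ as $k\to\infty$. Set $a_k:=\Xi_k+S_k$ for $k>m$. Using the hypothesis $\Xi_{k+1}\le \Xi_k-\Lambda_k+\theta_k$ one computes
\[
a_{k+1}=\Xi_{k+1}+S_{k+1}\le \Xi_k-\Lambda_k+\theta_k+S_{k+1}=\Xi_k-\Lambda_k+S_k=a_k-\Lambda_k ,
\]
so in particular $a_{k+1}\le a_k$ for all $k>m$ because $\Lambda_k\ge 0$.

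For part (i), note that $\{a_k\}_{k>m}$ is nonincreasing and bounded below by $0$ since $\Xi_k\ge 0$ and $S_k\ge 0$; hence it converges to some limit $\ell\ge 0$. Because $S_k\to 0$, it follows that $\Xi_k=a_k-S_k\to\ell$, which gives the convergence of $\{\Xi_k\}$.

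For part (ii), telescope the inequality $\Lambda_k\le a_k-a_{k+1}$ over $k=m+1,\dots,N$ to obtain $\sum_{k=m+1}^{N}\Lambda_k\le a_{m+1}-a_{N+1}\le a_{m+1}$. Letting $N\to\infty$ yields $\sum_{k=m+1}^{\infty}\Lambda_k\le a_{m+1}<\infty$, and therefore $\Lambda_k\to 0$. The case $\theta_k\equiv 0$ is precisely the special case $S_k\equiv 0$ of the above argument: $\{\Xi_k\}$ is then nonincreasing and bounded below, hence convergent, and $\sum_{k=m+1}^{\infty}\Lambda_k\le \Xi_{m+1}<\infty$.

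As for difficulty, there is essentially no obstacle: the only point that needs a moment's care is the legitimacy of the tail sums $S_k$ and the fact that $S_k\to 0$, which is immediate from the summability of $\{\theta_k\}$; the remainder is telescoping together with the monotone convergence of a bounded sequence.
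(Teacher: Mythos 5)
Your proof is correct, and it takes a slightly different (and in fact tighter) route than the paper's for part (i). The paper proves (ii) first by summing the inequality directly, and then argues (i) from the one-sided bound $\Xi_p \leq \Xi_q + \sum_{k=q}^{\infty}\theta_k$ for $p>q>m$, asserting that this makes $\{\Xi_k\}$ Cauchy. That assertion is not immediate as written: the bound controls how much $\Xi_k$ can \emph{increase} between $q$ and $p$ but says nothing about decreases, so one still needs an additional step (e.g.\ a $\limsup$/$\liminf$ comparison, or precisely your construction) to conclude convergence. Your device of absorbing the summable perturbation into the tail sums, $a_k := \Xi_k + S_k$ with $S_k=\sum_{j\ge k}\theta_j$, produces a genuinely nonincreasing sequence bounded below, so convergence of $\{a_k\}$ and hence of $\{\Xi_k\}=\{a_k-S_k\}$ follows from the monotone convergence theorem with no gap. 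Your treatment of (ii) by telescoping $\Lambda_k \le a_k - a_{k+1}$ is equivalent to the paper's direct summation. In short, both arguments land in the same place, but your shifted-sequence formulation closes a small logical gap in the paper's proof of part (i).
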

\begin{proof}
Since $\{\Xi_k\}$ and $\{\Lambda_k\}$ are nonnegative sequences and
\[
\Xi_{k+1} \leq \Xi_k - \Lambda_k + \theta_k \quad \text{for all } k > m,
\]
we sum both sides from $k = m+1$ to $k = N$ to obtain
\[
\Xi_{N+1} \leq \Xi_{m+1} - \sum_{k=m+1}^{N} \Lambda_k + \sum_{k=m+1}^{N} \theta_k.
\]
Rearranging terms yields
\[
\sum_{k=m+1}^{N} \Lambda_k \leq \Xi_{m+1} - \Xi_{N+1} + \sum_{k=m+1}^{N} \theta_k.
\]
Since $\Xi_k \geq 0$ and $\sum_{k=m+1}^{\infty} \theta_k < \infty$ by assumption, we conclude that $\sum_{k=m+1}^{\infty} \Lambda_k < \infty$,
which implies that $\Lambda_k \to 0$ as $k \to \infty$. This proves part (ii).
\vskip 2mm 
To prove part (i), observe that the inequality also implies
\[
\Xi_{k+1} \leq \Xi_k + \theta_k.
\]
Since the sequence $\{\theta_k\}$ is nonnegative and summable, i.e., $\sum_{k=m+1}^\infty \theta_k < \infty$, it follows that any potential increases in $\{\Xi_k\}$ are uniformly controlled by a convergent series. More precisely, for any integers $p > q > m$, we have
\[
\Xi_p \leq \Xi_q + \sum_{k=q}^{p-1} \theta_k \leq \Xi_q + \sum_{k=q}^{\infty} \theta_k,
\]
which implies that the sequence $\{\Xi_k\}$ is a Cauchy sequence. Therefore, it converges.
\vskip 2mm 
Finally, when $\theta_k := 0$ for all $k$, the same arguments apply, and both conclusions remain valid.
\end{proof}

\section{Asymptotic Analysis of a Forward-Backward-Forward-Type Dynamical System}\label{sec:dynsys}

\quad In this section, we investigate the asymptotic behavior of a forward-backward-forward-type (FBF-type) dynamical system \eqref{4} for solving quasimonotone variational inequality problems. 
 \vskip 2mm 
The analysis is conducted under the following assumptions:

\vskip 2mm
\textbf{(C1)}: The solution set \( \Omega_{D} \) is nonempty; that is, \( \Omega_{D} \neq \emptyset \).

\vskip 2mm
\textbf{(C2)}: The operator \( F:\mathcal{H} \to \mathcal{H} \) is quasimonotone and $L-$Lipschitz continuous.

\vskip 2mm
\textbf{(C3)}: The mapping \( F \) satisfies a sequential lower semicontinuity condition: whenever a sequence \( \{x_k\} \subset \mathcal{H} \) converges weakly to \( x \), i.e., \( x_k \rightharpoonup x \), it holds that
\[
\|F(x)\| \leq \liminf_{k \to \infty} \|F(x_k)\|.
\]

\vskip 2mm
\textbf{(C4)}: Let \( A := \{u \in K : F(u) = 0\} \setminus \Omega_D \). Then for every \( p \in \Omega_D \), there exists a constant \( \delta_p > 0 \) such that for any two distinct points \( y_1, y_2 \in A_p := \{p\} \cup A \), it holds that
\[
\|y_1 - y_2\| \geq 2\delta_p.
\]

\begin{remark}
 The existence and uniqueness of the trajectory \(x(t)\) generated by system \eqref{4} have been established by Banert and Bot \cite{a23}, as a consequence of the global Cauchy–Lipschitz theorem, leveraging the Lipschitz continuity of the operator \(F\).
\end{remark}
\begin{remark}
    Note that condition \textbf{(C3)} is strictly weaker than the sequential weak-to-weak continuity of the operator \(F\). A detailed justification and relevant examples can be found in recent works on solving quasimonotone variational inequality problems, see \cite{a24}. This relaxation significantly broadens the applicability of our method to a wider class of operators that may fail to satisfy weak sequential continuity.
\end{remark}
In what follows, we start our asymptotic analysis of FBF-type dynamical system for solving quasimonotone variational inequality problems. The following proposition is a natural extension of some results of Bot et al.\cite{a22}. 
\begin{proposition}\label{p3.4}
    Suppose that $\Omega_{D} \neq \emptyset$, $F$ is quasimonotone on $K$ and $L$-Lipschitz continuous, and $\lambda\in(0,\frac{1}{L})$. Then for any $x^l \in \Omega_{D}$, the following inequality holds:
    \begin{equation}
        \langle \dot{x}(t), x(t) - x^l \rangle \leq -\left(1 -\lambda L\right) \|x(t) - y(t)\|^2, \quad \forall t \in [0,+\infty).
    \end{equation}
    In addition, the function $t \mapsto \|x(t) - x^l\|^2$ is nonincreasing, and it holds that
    \begin{equation*}
        \int_0^{+\infty} \|x(t) - y(t)\|^2 \, dt < \infty, \quad \text{and} \quad \lim_{t\to\infty}\|x(t)-y(t)\|=0.
    \end{equation*}
\end{proposition}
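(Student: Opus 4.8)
The plan is to monitor the energy $\varphi(t) := \tfrac12\|x(t)-x^l\|^2$, whose time derivative along the trajectory is $\langle \dot x(t), x(t)-x^l\rangle$, and to estimate this inner product using the two lines of \eqref{4}. Substituting $\dot x(t) = y(t)-x(t)+\lambda\bigl(F(x(t))-F(y(t))\bigr)$ and decomposing $x(t)-x^l = \bigl(x(t)-y(t)\bigr) + \bigl(y(t)-x^l\bigr)$, I would split $\langle \dot x(t), x(t)-x^l\rangle$ into a ``diagonal'' part, namely $\langle y(t)-x(t)+\lambda(F(x(t))-F(y(t))),\, x(t)-y(t)\rangle$, and a ``projection'' part $\langle y(t)-x(t)+\lambda(F(x(t))-F(y(t))),\, y(t)-x^l\rangle$. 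The diagonal part equals $-\|x(t)-y(t)\|^2 + \lambda\langle F(x(t))-F(y(t)), x(t)-y(t)\rangle$, which by Cauchy--Schwarz and the $L$-Lipschitz continuity of $F$ in \textbf{(C2)} is at most $-(1-\lambda L)\|x(t)-y(t)\|^2$, producing exactly the claimed right-hand side.

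The crux is then to show that the projection part is nonpositive. I would peel off $-\lambda\langle F(y(t)), y(t)-x^l\rangle$, leaving $\langle y(t)-(x(t)-\lambda F(x(t))),\, y(t)-x^l\rangle$. Since $y(t)=P_K\bigl(x(t)-\lambda F(x(t))\bigr)$ and $x^l\in\Omega_D\subseteq K$, the obtuse-angle property applied with $\xi=x(t)-\lambda F(x(t))$ and $\eta=x^l$ gives $\langle \xi-y(t),\, x^l-y(t)\rangle\le 0$, i.e.\ this term is $\le 0$. For the remaining term, $x^l\in\Omega_D$ and $y(t)\in K$ yield, by the very definition of $\Omega_D$, that $\langle F(y(t)), y(t)-x^l\rangle\ge 0$, so $-\lambda\langle F(y(t)), y(t)-x^l\rangle\le 0$. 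Hence the projection part is nonpositive and the inequality $\langle \dot x(t), x(t)-x^l\rangle\le -(1-\lambda L)\|x(t)-y(t)\|^2$ follows; note that only the Minty property of $x^l$, not quasimonotonicity of $F$, is used here.

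Since $\lambda<1/L$ forces $1-\lambda L>0$, this gives $\dot\varphi(t)\le 0$, so $t\mapsto\|x(t)-x^l\|^2$ is nonincreasing. Integrating the differential inequality over $[0,T]$ and using $\varphi\ge 0$ yields $(1-\lambda L)\int_0^T\|x(t)-y(t)\|^2\,dt\le\varphi(0)=\tfrac12\|x_0-x^l\|^2$, and letting $T\to+\infty$ gives $\int_0^{+\infty}\|x(t)-y(t)\|^2\,dt<\infty$. For the final limit I would invoke Lemma~\ref{lemma:asymptotic_decay} with $p=r=1$ applied to $A(t):=\|x(t)-y(t)\|^2$, which we have just shown belongs to $L^1([0,+\infty))$. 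One must exhibit $B\in L^1$ with $\tfrac{d}{dt}A(t)\le B(t)$ a.e.: since $x(\cdot)$ is $C^1$ and the map $x\mapsto P_K(x-\lambda F(x))$ is $(1+\lambda L)$-Lipschitz (nonexpansiveness of $P_K$ plus $L$-Lipschitzness of $F$), the function $y(\cdot)$ is locally absolutely continuous with $\|\dot y(t)\|\le(1+\lambda L)\|\dot x(t)\|$ a.e., while the second line of \eqref{4} gives $\|\dot x(t)\|\le(1+\lambda L)\|x(t)-y(t)\|$. Combining, $\tfrac{d}{dt}\|x(t)-y(t)\|^2\le 2\|\dot x(t)-\dot y(t)\|\,\|x(t)-y(t)\|\le 2(2+\lambda L)(1+\lambda L)\|x(t)-y(t)\|^2=:B(t)\in L^1$, so the lemma yields $\lim_{t\to\infty}\|x(t)-y(t)\|^2=0$.

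The bulk of the argument is routine sign bookkeeping around the projection inequality and the definition of $\Omega_D$; the only mildly delicate point is the last step, where one must secure enough regularity of $y(\cdot)$ (local absolute continuity and the a.e.\ bound on $\dot y$) to legitimately differentiate $\|x(t)-y(t)\|^2$ and apply Lemma~\ref{lemma:asymptotic_decay}. I expect that to be the main technical obstacle, though it is resolved entirely by the Lipschitz estimates above.
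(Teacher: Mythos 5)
Your proposal is correct and follows essentially the same route as the paper: the obtuse-angle property of $P_K$ plus the Minty property of $x^l\in\Omega_D$ yield the key inequality, Cauchy--Schwarz with $L$-Lipschitz continuity gives the $-(1-\lambda L)\|x(t)-y(t)\|^2$ bound, integration gives the $L^2$ estimate, and Lemma~\ref{lemma:asymptotic_decay} with the same Lipschitz bounds on $\dot y$ (your constant $2(2+\lambda L)(1+\lambda L)$ equals the paper's $2\bigl((1+\lambda L)+(1+\lambda L)^2\bigr)$) gives the limit. Your splitting of $\langle \dot x(t), x(t)-x^l\rangle$ into a diagonal and a projection part is only a cosmetic reorganization of the paper's combination of the same two inequalities, and your side remark that quasimonotonicity is not actually used here is accurate.
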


\begin{proof}
    Since $y(t) = P_K(x(t) - \lambda F(x(t)))$, by the obtuse-angle property of the projection, we have
    \begin{equation*}
        \langle x(t) - \lambda F(x(t)) - y(t), y(t) - x^l \rangle \geq 0, \quad \forall t \in [0, +\infty).
    \end{equation*}
    Moreover, since $x^l \in \Omega_D$ and $y(t) \in K$, which implies 
    \begin{equation*}
        \langle F(y(t)), y(t) - x^l \rangle \geq 0.
    \end{equation*}
    Combining the above two inequalities yields
    \begin{equation*}
        \langle x(t) - y(t) - \lambda[F(x(t)) - F(y(t))], y(t) - x^l \rangle \geq 0.
    \end{equation*}
    Using the formulation of the dynamical system \eqref{4}, we obtain
    \begin{align*}
        &\langle x(t) - y(t) - \lambda[F(x(t)) - F(y(t))], y(t) - x(t) \rangle - \langle \dot{x}(t), x(t) - x^l \rangle \geq 0,
    \end{align*}
    which implies
    \begin{align*}
        \langle \dot{x}(t), x(t) - x^l \rangle &\leq \langle x(t) - y(t) - \lambda[F(x(t)) - F(y(t))], y(t) - x(t) \rangle \\
        &= -\|x(t) - y(t)\|^2 + \lambda \langle F(x(t)) - F(y(t)), x(t) - y(t) \rangle \\
        &\leq -\|x(t) - y(t)\|^2 + \lambda \|F(x(t)) - F(y(t))\| \cdot \|x(t) - y(t)\| \\
        &\leq -\|x(t) - y(t)\|^2 + \lambda L \|x(t) - y(t)\|^2 \\
        &= -\left(1 -\lambda L\right) \|x(t) - y(t)\|^2. 
    \end{align*}
\vskip 2mm 
    Therefore, for all $t \in [0, +\infty)$,
    \begin{equation*}
        \frac{1}{2} \frac{d}{dt} \|x(t) - x^l\|^2 = \langle \dot{x}(t), x(t) - x^l \rangle \leq -\left(1 - \lambda L\right) \|x(t) - y(t)\|^2 \leq 0,
    \end{equation*}
    which shows that the function $t \mapsto \|x(t) - x^l\|^2$ is nonincreasing.
\vskip 2mm 
    Let $S > 0$. Integrating the above inequality over $[0, S]$, we obtain
    \begin{align*}
        \int_0^S \left(1 - \lambda L\right) \|x(t) - y(t)\|^2 \, dt &\leq \int_0^S -\frac{d}{dt} \left(\frac{1}{2} \|x(t) - x^l\|^2\right) \, dt \\
        &= \frac{1}{2} \left(\|x(0) - x^l\|^2 - \|x(S) - x^l\|^2\right) \\
        &\leq \frac{1}{2} \|x(0) - x^l\|^2.
    \end{align*}
  Taking the limit as \( S \to \infty \), we obtain
\[
    \int_0^{+\infty}  \|x(t) - y(t)\|^2 \, dt < \infty.
\]

From the nonexpansiveness of \(P_K\), it follows that the trajectory \(y(t)\) is locally absolutely continuous and
\[
\|\dot{y}(t)\| \leq (1 + \lambda L)\|\dot{x}(t)\|, \quad \text{a.e. } t \geq 0.
\]
In addition,
\[
\|\dot{x}(t)\| = \|y(t) - x(t) + \lambda(F(x(t)) - F(y(t)))\|
\leq (1 + \lambda L)\|x(t) - y(t)\|.
\]
Hence,
\[
\|\dot{y}(t)\| \leq (1 + \lambda L)^2 \|x(t) - y(t)\|.
\]

Now consider the time derivative of the squared norm:
\[
\frac{d}{dt} \|x(t) - y(t)\|^2 = 2 \langle x(t) - y(t), \dot{x}(t) - \dot{y}(t) \rangle
\leq 2\|x(t) - y(t)\| \cdot \left(\|\dot{x}(t)\| + \|\dot{y}(t)\|\right).
\]
Thus,
\[
\frac{d}{dt} \|x(t) - y(t)\|^2 \leq 2\left(\|\dot{x}(t)\| + \|\dot{y}(t)\|\right)\|x(t) - y(t)\|
\leq C \|x(t) - y(t)\|^2,
\]
where \(C = 2\left((1 + \lambda L) + (1 + \lambda L)^2\right)\).

We have proved it before
\[
\int_0^{+\infty} \|x(t) - y(t)\|^2 \, dt < +\infty.
\]
Then, by Lemma \ref{lemma:asymptotic_decay}, we obtain
\[
\lim_{t \to +\infty} \|x(t) - y(t)\| = 0.
\]
\end{proof}
The following proposition can be viewed as a continuous analogue of Lemma 3.3 in our earlier work \cite{a25}.
\begin{proposition}\label{p3.5}
Suppose that assumptions \textbf{C1}-\textbf{C4} hold and $\lambda\in(0,\frac{1}{L})$. Then, one of the following alternatives must be true: either \( x^l \in \Omega_{D} \), or \( F(x^l) = 0 \), where \( x^l \) denotes a weak sequential cluster point of the trajectory \( x(t) \) as \( t \to +\infty \).
\end{proposition}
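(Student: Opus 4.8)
The plan is to exploit the structure already established in Proposition~\ref{p3.4}: since $t \mapsto \|x(t) - x^l\|^2$ is nonincreasing for every $x^l \in \Omega_D$, the trajectory $x(t)$ is bounded, so weak sequential cluster points exist. Let $x^\ell$ be one such point, realized along a sequence $t_k \to +\infty$ with $x(t_k) \rightharpoonup x^\ell$. By Proposition~\ref{p3.4} we also know $\|x(t) - y(t)\| \to 0$, hence $y(t_k) \rightharpoonup x^\ell$ as well. Because $y(t_k) \in K$ and $K$ is weakly closed (closed and convex), $x^\ell \in K$. The goal is to show that either $x^\ell \in \Omega_D$ or $F(x^\ell) = 0$.

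First I would pass to the limit in the projection characterization. Writing $y(t) = P_K(x(t) - \lambda F(x(t)))$ and using the obtuse-angle property, for every fixed $z \in K$ we have
\[
\langle x(t) - \lambda F(x(t)) - y(t),\, z - y(t)\rangle \le 0,
\]
which rearranges to
\[
\langle F(x(t)),\, z - y(t)\rangle \ge \tfrac{1}{\lambda}\langle x(t) - y(t),\, z - y(t)\rangle.
\]
Along $t_k$, the right-hand side tends to $0$ (since $\|x(t_k) - y(t_k)\| \to 0$ and $z - y(t_k)$ is bounded), and because $F$ is $L$-Lipschitz, $\|F(x(t_k)) - F(y(t_k))\| \to 0$, so for every $z \in K$,
\[
\liminf_{k\to\infty}\,\langle F(y(t_k)),\, z - y(t_k)\rangle \ge 0.
\]
Now split into two cases according to whether $\liminf_{k\to\infty}\|F(y(t_k))\| = 0$ or $> 0$. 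In the first case, condition \textbf{(C3)} gives $\|F(x^\ell)\| \le \liminf_k \|F(y(t_k))\| = 0$ (after possibly passing to a further subsequence along which the $\liminf$ is attained, using $y(t_k) \rightharpoonup x^\ell$), so $F(x^\ell) = 0$, which is the second alternative. In the second case, there is $\gamma > 0$ and a subsequence (still denoted $t_k$) with $\|F(y(t_k))\| \ge \gamma$; I would then argue, via the standard quasimonotonicity argument for VIs, that $x^\ell \in \Omega_D$. Specifically, fix $z \in K$ and set $z_\beta = x^\ell + \beta(z - x^\ell) \in K$ for $\beta \in (0,1)$; the inequality above with $z_\beta$ in place of $z$, combined with quasimonotonicity of $F$ and the lower-bound $\|F(y(t_k))\| \ge \gamma$, will yield $\langle F(z_\beta), z_\beta - x^\ell\rangle \ge 0$, hence $\langle F(z_\beta), z - x^\ell\rangle \ge 0$; letting $\beta \to 0^+$ and invoking continuity of $F$ gives $\langle F(x^\ell), z - x^\ell\rangle \ge 0$—wait, this shows $x^\ell \in \Omega$, not $\Omega_D$. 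To land in $\Omega_D$ I instead keep the Minty-type inequality: for each fixed $z \in K$, quasimonotonicity applied to the pair $(y(t_k), z)$ together with a sign analysis (using that either $\langle F(y(t_k)), z - y(t_k)\rangle$ is eventually $\ge 0$ by the $\liminf$ estimate, modulo an $\varepsilon$) forces $\langle F(z), z - y(t_k)\rangle \ge -\varepsilon_k$ with $\varepsilon_k \to 0$; passing to the weak limit $y(t_k) \rightharpoonup x^\ell$ and using that $z \mapsto \langle F(z), z - \cdot\rangle$ is weakly continuous in the last slot (it is affine and continuous) gives $\langle F(z), z - x^\ell\rangle \ge 0$ for all $z \in K$, i.e. $x^\ell \in \Omega_D$.

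The main obstacle is the case $\|F(y(t_k))\| \ge \gamma > 0$: the subtlety is that quasimonotonicity only gives $\langle F(\eta), \eta - \xi\rangle \ge 0$ from the \emph{strict} inequality $\langle F(\xi), \eta - \xi\rangle > 0$, so I cannot directly feed in the non-strict $\liminf$ estimate. The fix is condition \textbf{(C4)} and a careful perturbation: one shows that if $\langle F(y(t_k)), z - y(t_k)\rangle$ fails to be strictly positive for infinitely many $k$, then (using $\|F(y(t_k))\| \ge \gamma$ and that $F$ does not vanish on the relevant set) $y(t_k)$ must accumulate near the set $A_p = \{x^\ell\} \cup A$ of zeros of $F$ lying outside $\Omega_D$, and the separation hypothesis in \textbf{(C4)} (points of $A_p$ are $2\delta_p$-apart) combined with $\|x(t) - y(t)\| \to 0$ and the monotonicity of $t \mapsto \|x(t) - x^\ell\|$ pins the trajectory down to a single such point, again forcing $F(x^\ell) = 0$. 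Thus in all sub-cases we land in one of the two stated alternatives; assembling the case analysis cleanly, and in particular handling the strict-versus-non-strict inequality gap with \textbf{(C4)}, is the technical heart of the argument.
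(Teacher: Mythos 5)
Your setup (boundedness of the trajectory, extraction of $t_k\to\infty$ with $x(t_k)\rightharpoonup x^\ell$ and hence $y(t_k)\rightharpoonup x^\ell$, the projection inequality, and the resulting estimate $\liminf_{k}\langle F(y(t_k)),z-y(t_k)\rangle\ge 0$ for each fixed $z\in K$) matches the paper's proof, and your first case ($\liminf_k\|F(y(t_k))\|=0$, handled via \textbf{(C3)} along a suitable subsequence) is correct. The gap sits exactly where you flag it, and your proposed fix does not close it. From the non-strict estimate $\langle F(y_k),z-y_k\rangle\ge-\varepsilon_k$ quasimonotonicity delivers nothing: its hypothesis is a \emph{strict} inequality, so no ``sign analysis'' turns this into $\langle F(z),z-y_k\rangle\ge-\varepsilon_k'$. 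The rescue via \textbf{(C4)} is also not viable: the failure of strict positivity of $\langle F(y_k),z-y_k\rangle$ for a given $z$ carries no information about $F(y_k)$ being small --- the inner product can vanish simply because $F(y_k)\perp(z-y_k)$ while $\|F(y_k)\|\ge\gamma$ --- so nothing forces $y_k$ to accumulate near zeros of $F$. Indeed, \textbf{(C4)} is not used in the paper's proof of this proposition at all; it enters only later, for uniqueness of the weak cluster point.

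The missing device is a perturbation of the \emph{test point}, not of the inequality. Set $z_n:=F(y_n)/\|F(y_n)\|^2$, which is well defined and bounded by $1/\gamma$ precisely because of the lower bound $\|F(y_n)\|\ge\gamma$ (this is the only place that bound is used), and set $\varepsilon_n:=|\langle F(y_n),z-y_n\rangle|+\tfrac{1}{n+1}\to 0$ in the sub-case where $\langle F(y_n),z-y_n\rangle\to 0$. Then $\langle F(y_n),\,z+\varepsilon_n z_n-y_n\rangle=\langle F(y_n),z-y_n\rangle+\varepsilon_n>0$ holds \emph{strictly}, so quasimonotonicity applies to the pair $(y_n,\,z+\varepsilon_n z_n)$ and yields $\langle F(z+\varepsilon_n z_n),\,z+\varepsilon_n z_n-y_n\rangle\ge 0$; the $L$-Lipschitz continuity of $F$ then replaces $F(z+\varepsilon_n z_n)$ by $F(z)$ at a cost of order $\varepsilon_n L/\gamma$ times a bounded factor, and letting $n\to\infty$ with $y_n\rightharpoonup x^\ell$ gives $\langle F(z),z-x^\ell\rangle\ge 0$. (The complementary sub-case $\limsup_n\langle F(y_n),z-y_n\rangle>0$ is handled directly by quasimonotonicity along a subsequence where the inner product is strictly positive.) Since $z\in K$ is arbitrary, $x^\ell\in\Omega_D$. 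Without this construction your Case~2 does not go through, so the proposal as written has a genuine gap at the technical heart of the argument.
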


\begin{proof}
We have previously shown that the mapping \( t \mapsto \|x(t) - x^l\|^2 \) is nonincreasing, which implies the boundedness of the trajectory \( x(t) \) in the Hilbert space \( \mathcal{H} \). Hence, there exists a constant \( M > 0 \) such that \( \|x(t)\| \leq M \) for all \( t \geq 0 \).
\vskip 2mm 
Let \( x^l \in \mathcal{H} \) be a weak sequential cluster point of \( x(t) \), that is, there exists a sequence \( (t_n)_{n \geq 0} \subset [0, +\infty) \) with \( t_n \to +\infty \) and \( x(t_n) \rightharpoonup x^l \) weakly in \( \mathcal{H} \). From the established asymptotic behavior, we have
\[
\lim_{t \to +\infty} \|x(t) - y(t)\| = 0,
\]
implying that \( \|x(t_n) - y(t_n)\| \to 0 \) as \( n \to \infty \). Consequently, \( y(t_n) \rightharpoonup x^l \) weakly.
\vskip 2mm 
For simplicity, define \( x_n := x(t_n) \) and \( y_n := y(t_n) \) for all \( n \geq 0 \). We now consider two distinct cases.
\vskip 2mm
\textbf{Case 1:} Suppose \( \limsup_{n \to \infty} \|F(y_n)\| = 0 \). Then
\[
\lim_{n \to \infty} \|F(y_n)\| = \liminf_{n \to \infty} \|F(y_n)\| = 0.
\]
By the continuity assumption \textbf{C3}, we obtain
\[
0 \leq \|F(x^l)\| \leq \liminf_{n \to \infty} \|F(y_n)\| = 0,
\]
which implies \( F(x^l) = 0 \).
\vskip 2mm
\textbf{Case 2:} Suppose instead that \( \limsup_{n \to \infty} \|F(y_n)\| > 0 \). Without loss of generality, assume there exists a subsequence \( \{y_{n_k}\} \) such that
\[
\lim_{k \to \infty} \|F(y_{n_k})\| = M > 0.
\]
Then, there exists \( K \in \mathbb{N} \) such that \( \|F(y_n)\| > \frac{M}{2} \) for all \( n \geq K \). Since \( y_n = P_K(x_n - \lambda F(x_n)) \), by the characterization of projections, we have
\[
\langle y_n - x_n + \lambda F(x_n), z - y_n \rangle \geq 0, \quad \forall z \in K.
\]
This is equivalent to
\[
\langle x_n - y_n, z - y_n \rangle \leq \lambda \langle F(x_n), z - y_n \rangle, \quad \forall z \in K.
\]
Rewriting, we get
\[
\frac{1}{\lambda} \langle x_n - y_n, z - y_n \rangle - \langle F(x_n) - F(y_n), z - y_n \rangle \leq \langle F(y_n), z - y_n \rangle, \quad \forall z \in K.
\]

Fix any \( z \in K \). Taking the limit as \( n \to \infty \) and using the facts that \( \|x_n - y_n\| \to 0 \),  and \( \{y_n\} \) is bounded, we deduce:
\begin{equation} \label{ineq1}
    0 \leq \liminf_{n \to \infty} \langle F(y_n), z - y_n \rangle \leq \limsup_{n \to \infty} \langle F(y_n), z - y_n \rangle < +\infty.
\end{equation}

If \( \limsup_{n \to \infty} \langle F(y_n), z - y_n \rangle > 0 \), then there exists a subsequence \( \{y_{n_k}\} \) such that
\[
\lim_{k \to \infty} \langle F(y_{n_k}), z - y_{n_k} \rangle > 0.
\]
Thus, for sufficiently large \( k \), we have \( \langle F(y_{n_k}), z - y_{n_k} \rangle > 0 \). Invoking the quasimonotonicity of \( F \), it follows that
\[
\langle F(z), z - y_{n_k} \rangle \geq 0, \quad \forall k \geq k_0.
\]
Passing to the limit yields \( \langle F(z), z - x^l \rangle \geq 0 \), i.e., \( x^l \in \Omega_D \).
\vskip 2mm
If instead \( \limsup_{n \to \infty} \langle F(y_n), z - y_n \rangle = 0 \), then from \eqref{ineq1}, we deduce
\[
\lim_{n \to \infty} \langle F(y_n), z - y_n \rangle = 0.
\]
Define
\[
\varepsilon_n := |\langle F(y_n), z - y_n \rangle| + \frac{1}{n+1}.
\]
Clearly, \( \varepsilon_n > 0 \) and \( \varepsilon_n \to 0 \). Hence,
\begin{equation} \label{ineq2}
    \langle F(y_n), z - y_n \rangle + \varepsilon_n > 0.
\end{equation}
Let \( z_n := \frac{F(y_n)}{\|F(y_n)\|^2} \) for \( n \geq K \), so that \( \langle F(y_n), z_n \rangle = 1 \). From \eqref{ineq2}, it follows that
\[
\langle F(y_n), z + \varepsilon_n z_n - y_n \rangle > 0.
\]
By the quasimonotonicity of \( F \), we obtain
\[
\langle F(z + \varepsilon_n z_n), z + \varepsilon_n z_n - y_n \rangle \geq 0.
\]
Then
\begin{align*}
\langle F(z), z + \varepsilon_n z_n - y_n \rangle 
&= \langle F(z) - F(z + \varepsilon_n z_n), z + \varepsilon_n z_n - y_n \rangle \\
&\quad + \langle F(z + \varepsilon_n z_n), z + \varepsilon_n z_n - y_n \rangle \\
&\geq -\|F(z) - F(z + \varepsilon_n z_n)\| \cdot \|z + \varepsilon_n z_n - y_n\| \\
&\geq -\varepsilon_n L \|z_n\| \cdot \|z + \varepsilon_n z_n - y_n\| \\
&= -\varepsilon_n \frac{L}{\|F(y_n)\|} \cdot \|z + \varepsilon_n z_n - y_n\| \\
&\geq -\varepsilon_n \frac{2L}{M} \cdot \|z + \varepsilon_n z_n - y_n\|.
\end{align*}
Since \( \varepsilon_n \to 0 \) and the norm \( \|z + \varepsilon_n z_n - y_n\| \) remains bounded, we conclude
\[
\langle F(z), z - x^l \rangle \geq 0, \quad \forall z \in K.
\]
Thus, \( x^l \in \Omega_D \), as claimed.
\end{proof}

\begin{lemma}\label{l3.7}
Assume that condition \textbf{C4} hold, that is, for every \( p \in \Omega_D \), let $A_p := \{p\} \cup A \subset \mathcal{H}$ be a finite subset of a Hilbert space $\mathcal{H}$, and suppose there exists a constant $\delta_p > 0$ such that
\[
\|y_1 - y_2\| \geq 2\delta_p, \quad \forall\, y_1 \neq y_2 \in A_p.
\]
Then, there exists a finite set of unit vectors $\{r_1, \dots, r_m\} \subset \mathcal{H}$ and the same constant $\delta_p > 0$ such that the tube neighborhoods
\[
B(y, \delta_p) := \left\{ x \in H : |\langle r_i, x - y \rangle| < \delta_p, \quad i = 1, \dots, m \right\}
\]
satisfy the property:
\[
B(y_1, \delta_p) \cap B(y_2, \delta_p) = \emptyset, \quad \forall\, y_1 \neq y_2 \in A_p.
\]
\end{lemma}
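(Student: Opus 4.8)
The plan is to build the unit vectors directly from the geometry of the finite set $A_p$. Enumerate $A_p = \{a_1,\dots,a_N\}$, and for each unordered pair $\{j,k\}$ with $j\neq k$ define the unit vector
\[
r_{jk} := \frac{a_j-a_k}{\|a_j-a_k\|},
\]
which is well defined since the separation hypothesis forces $\|a_j-a_k\|\geq 2\delta_p>0$. Because $A_p$ is finite there are only $m=\binom{N}{2}$ such vectors; relabel them as $\{r_1,\dots,r_m\}$. These are the vectors I would take, together with the unchanged constant $\delta_p$.

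Next I would verify the disjointness claim by contradiction. Fix distinct $y_1=a_j$ and $y_2=a_k$ in $A_p$ and suppose some $x$ belonged to $B(a_j,\delta_p)\cap B(a_k,\delta_p)$. By the definition of the tube neighborhoods, $x$ then satisfies $|\langle r_i,x-a_j\rangle|<\delta_p$ and $|\langle r_i,x-a_k\rangle|<\delta_p$ for \emph{every} index $i$, in particular for the index with $r_i=r_{jk}$. Writing $a_j-a_k=(x-a_k)-(x-a_j)$ and using $\langle r_{jk},a_j-a_k\rangle=\|a_j-a_k\|$ together with the triangle inequality gives
\[
\|a_j-a_k\| = \langle r_{jk},\,(x-a_k)-(x-a_j)\rangle \leq |\langle r_{jk},x-a_k\rangle| + |\langle r_{jk},x-a_j\rangle| < 2\delta_p,
\]
which contradicts $\|a_j-a_k\|\geq 2\delta_p$. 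Hence $B(y_1,\delta_p)\cap B(y_2,\delta_p)=\emptyset$ for all distinct $y_1,y_2\in A_p$, which is exactly the asserted property.

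I do not expect a genuine obstacle here: the statement is essentially a packing/separation observation, and the factor $2$ in the hypothesis $\|y_1-y_2\|\geq 2\delta_p$ is precisely what the triangle inequality consumes, so the \emph{same} constant $\delta_p$ survives without any shrinking. The only points that need to be stated carefully are that finiteness of $A_p$ is what guarantees the family $\{r_1,\dots,r_m\}$ is finite, and that the tube neighborhood of a point is cut out by \emph{all} of the $r_i$ simultaneously, so in particular by the single direction $r_{jk}$ that distinguishes the chosen pair; the remaining directions play no role in the disjointness of that pair but are needed for the other pairs.
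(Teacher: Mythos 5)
Your proposal is correct and follows essentially the same route as the paper: both construct the unit vectors as normalized pairwise differences of points in the finite set $A_p$ and derive the disjointness by a triangle-inequality contradiction using the single direction associated with the given pair (the paper phrases it via the reverse triangle inequality, you via the direct one, which is an immaterial difference). No gaps.
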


\begin{proof}
Since $A_p$ is a finite set, the number of unordered distinct point pairs $(y_1, y_2)$ with $y_1 \neq y_2 \in A_p$ is finite. For each such pair, define the unit direction vector
\[
r_{(y_1,y_2)} := \frac{y_2 - y_1}{\|y_2 - y_1\|}.
\]
Collect all such distinct directions (after removing repetitions due to symmetry), and denote the resulting finite set as $\{r_1, \dots, r_m\}$.
We claim that for the above choice of directions and the constant $\delta_u$ from the assumption, the sets $B(y, \delta_p)$ are disjoint for distinct $y$.

Assume, for contradiction, that there exists $x \in \mathcal{H}$ such that $x \in B(y_1, \delta_p) \cap B(y_2, \delta_p)$ for some $y_1 \neq y_2 \in A_p$. Then, in particular, for the unit vector
\[
r := \frac{y_2 - y_1}{\|y_2 - y_1\|},
\]
we have
\[
|\langle r, x - y_1 \rangle| < \delta_p, \quad |\langle r, x - y_2 \rangle| < \delta_p.
\]

However, note that
\[
\langle r, x - y_2 \rangle = \langle r, x - y_1 \rangle + \langle r, y_1 - y_2 \rangle = \langle r, x - y_1 \rangle - \|y_2 - y_1\|.
\]
Taking absolute value yields
\[
|\langle r, x - y_2 \rangle| = \left| \langle r, x - y_1 \rangle - \|y_2 - y_1\| \right| \geq \|y_2 - y_1\| - |\langle r, x - y_1 \rangle|.
\]
Since $|\langle r, x - y_1 \rangle| < \delta_p$ and by assumption $\|y_2 - y_1\| \geq 2\delta_p$, we obtain
\[
|\langle r, x - y_2 \rangle| > 2\delta_p - \delta_p = \delta_p,
\]
which contradicts $x \in B(y_2, \delta_p)$. Therefore, such $x$ cannot exist, and we conclude
\[
B(y_1, \delta_p) \cap B(y_2, \delta_p) = \emptyset, \quad \forall\, y_1 \neq y_2 \in A_p.
\]
\end{proof}
\begin{lemma}
Assume that conditions \textbf{C1}-\textbf{C4} hold and $\lambda\in(0,\frac{1}{L})$. Then the trajectory \(\{x(t)\}\) admits at most one weak cluster point in \(\Omega_D\) as \(t \to \infty\). Moreover, there exists a constant \(N_3 \geq 0\) such that for all \(t \geq N_3\), the trajectory $x(t)$ lies in the set \(\Omega := \bigcup_{y \in A_p} B\left(y, \frac{\delta_p}{2}\right)\), and there exists a constant \(T \geq N_3\) such that $y(t)\in\Omega$ when \(t \geq T\).
\end{lemma}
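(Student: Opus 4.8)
The plan is to combine an Opial-type argument for the uniqueness of a weak cluster point in $\Omega_D$ with the tube-separation construction of Lemma~\ref{l3.7} and a weak-compactness contradiction argument for the eventual confinement of the trajectories. Throughout, the boundedness of $x(t)$ (hence the existence of weak cluster points) and the facts $\|x(t)-y(t)\|\to 0$ and $y(t)\in K$ from Propositions~\ref{p3.4}--\ref{p3.5} are the inputs.

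\textbf{Uniqueness.} By Proposition~\ref{p3.4}, for every $q\in\Omega_D$ the map $t\mapsto\|x(t)-q\|^2$ is nonincreasing, so $\ell(q):=\lim_{t\to+\infty}\|x(t)-q\|$ exists. If $p_1,p_2\in\Omega_D$ are weak cluster points, say $x(t_n)\rightharpoonup p_1$ and $x(s_n)\rightharpoonup p_2$ with $t_n,s_n\to+\infty$, then expanding $\|x(t)-p_1\|^2=\|x(t)-p_2\|^2+2\langle x(t),p_2-p_1\rangle+\|p_1\|^2-\|p_2\|^2$ and letting first $t=t_n\to+\infty$ and then $t=s_n\to+\infty$ gives $\ell(p_1)^2=\ell(p_2)^2-\|p_1-p_2\|^2$ and $\ell(p_1)^2=\ell(p_2)^2+\|p_1-p_2\|^2$, forcing $p_1=p_2$. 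Hence $x(t)$ has at most one weak cluster point in $\Omega_D$.

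\textbf{Choice of base point and tube construction.} Since $x(t)$ is bounded it has weak cluster points, and by Proposition~\ref{p3.5} each such point is either in $\Omega_D$ or a zero of $F$; moreover, since $y(t_n)\in K$ and $\|x(t_n)-y(t_n)\|\to 0$, any weak cluster point lies in the weakly closed set $K$, hence in $A$ whenever it is not in $\Omega_D$. Using the uniqueness above, fix $p\in\Omega_D$ to be the unique weak cluster point of $x(t)$ in $\Omega_D$ if one exists, and otherwise an arbitrary point of $\Omega_D$ (nonempty by \textbf{C1}). Then every weak cluster point of $x(t)$ belongs to $A_p=\{p\}\cup A$, and Lemma~\ref{l3.7} provides unit vectors $r_1,\dots,r_m$ and the constant $\delta_p>0$ with $B(y_1,\delta_p)\cap B(y_2,\delta_p)=\emptyset$ for distinct $y_1,y_2\in A_p$; set $\Omega:=\bigcup_{y\in A_p}B(y,\delta_p/2)$.

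\textbf{Confinement.} Suppose, for contradiction, that $x(\tau_n)\notin\Omega$ for some $\tau_n\to+\infty$. By boundedness pass to a subsequence with $x(\tau_{n_k})\rightharpoonup w$; then $w\in A_p$, say $w=y_0$, and since there are only finitely many $r_i$, weak convergence yields $|\langle r_i,x(\tau_{n_k})-y_0\rangle|<\delta_p/2$ for all $i$ and all large $k$, i.e.\ $x(\tau_{n_k})\in B(y_0,\delta_p/2)\subset\Omega$, a contradiction. Hence there is $N_3\geq 0$ with $x(t)\in\Omega$ for all $t\geq N_3$. As $y(t)$ is bounded (from boundedness of $x(t)$ and $\|x(t)-y(t)\|\to 0$) and its weak cluster points coincide with those of $x(t)$, the identical argument gives $T\geq N_3$ with $y(t)\in\Omega$ for all $t\geq T$. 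The main obstacle is precisely this confinement step: it relies essentially on the finiteness of $\{r_1,\dots,r_m\}$, so that a single weak limit controls all tube coordinates simultaneously — which in turn rests on the finiteness and uniform $2\delta_p$-separation of $A_p$ from condition \textbf{C4} and Lemma~\ref{l3.7}; absent finiteness, the weak-compactness argument would fail.
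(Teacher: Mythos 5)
Your proof is correct and follows essentially the same route as the paper: an Opial-type identity for the uniqueness of the weak cluster point in $\Omega_D$, followed by a weak-compactness contradiction using the tube neighborhoods of Lemma~\ref{l3.7} for the eventual confinement of $x(t)$ and $y(t)$ in $\Omega$. If anything, your confinement step is slightly more careful than the paper's, since you justify via the weak continuity of the finitely many functionals $\langle r_i,\cdot\rangle$ why the weak limit forces $x(\tau_{n_k})\in B(y_0,\delta_p/2)$ for large $k$, and you handle $y(t)$ by the same cluster-point argument rather than the paper's $t$-dependent ball radius.
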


\begin{proof}
Suppose, by contradiction, that the trajectory $\{x(t)\}_{t \geq 0}$ admits at least two distinct weak cluster points in $\Omega_D$, say $\tilde{x} \neq \bar{x}$. Let $\{x(t_i)\}$ be a subsequence of $\{x(t)\}$ that converges weakly to $\tilde{x}$ as $i\to\infty$. According to Proposition \ref{p3.4}, the limit of $\|x(t)-x^l\|$ exists for all $x^l\in\Omega_{D}$. Hence, we obtain the following:
\begin{align*}
    \lim_{t\to\infty}\|x(t)-\tilde{x}\|& = \lim_{i\to\infty}\|x(t_i)-\tilde{x}\| = \liminf_{i\to\infty}\|x(t_i)-\tilde{x}\|\\
    &<\liminf_{i\to\infty}\|x(t_i)-\bar{x}\| =  \lim_{t\to\infty}\|x(t)-\bar{x}\|\\
    & = \lim_{j\to\infty}\|x(t_j)-\bar{x}\| = \liminf_{j\to\infty}\|x(t_j)-\bar{x}\|\\
    &< \liminf_{j\to\infty}\|x(t_j)-\tilde{x}\| = \lim_{j\to\infty}\|x(t_j)-\tilde{x}\| = \lim_{t\to\infty}\|x(t)-\tilde{x}\|,
\end{align*}
which leads to contradiction. 
\vskip 2mm
Next, we show that the trajectory eventually enters the set \(\Omega\). Assume for contradiction that there exists a sequence \(\{t_k\}\) with \(t_k \to \infty\) such that \(x(t_k) \notin \Omega\) for all \(k\). Since \(\{x(t_k)\}\) is bounded, there exists a weakly convergent subsequence (still denoted \(\{x(t_k)\}\)) such that \(x(t_k) \rightharpoonup x^*\).
Note that \(\Omega\) is an open set. Since \(x(t_k) \notin \Omega\), it follows from the weak limit that \(x^* \notin \Omega\). By definition of \(\Omega\), this implies \(x^* \notin A_p\). However, the definition of $A_p$ ensures that any weak cluster point must lie in \(A_p\), yielding a contradiction. Hence, we conclude that \(x(t) \in \Omega\) for all sufficiently large \(t\), i.e., there exists \(N_3 \geq 0\) such that \(x(t) \in \Omega\) for all \(t \geq N_3\).
\vskip 2mm 
 Indeed, since \(\Omega\) is an open set and \(x(t) \in \Omega\) for \(t \geq N_3\), for each such \(t\), there exists \(\varepsilon_t > 0\) such that the open ball \(B(x(t), \varepsilon_t) \subset \Omega\). By the assumption \(\|x(t) - y(t)\| \to 0\), there exists a constant $T\geq N_3$, for sufficiently large \(t\geq T\geq N_3\), we have \(\|x(t) - y(t)\| < \varepsilon_t\), which ensures \(y(t) \in \Omega\). Hence, the trajectory \(\{y(t)\}\) also eventually lies entirely in \(\Omega\).
\end{proof}

\begin{proposition}
       Assume that conditions \textbf{C1}-\textbf{C4} hold and $\lambda\in(0,\frac{1}{L})$. Then, the trajectories \(x(t)\) and \(y(t)\) generated by \eqref{4} converge weakly to a solution of \(\Omega\) as \(t \to \infty\).
\end{proposition}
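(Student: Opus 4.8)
The strategy is to combine Proposition~\ref{p3.4}, Proposition~\ref{p3.5}, Lemma~\ref{l3.7} and the lemma immediately preceding this proposition with the elementary fact that a bounded trajectory in a Hilbert space possessing a unique weak sequential cluster point converges weakly to that point. First, by Proposition~\ref{p3.4} the map $t\mapsto\|x(t)-x^l\|$ is nonincreasing for every $x^l\in\Omega_D$, which is nonempty by \textbf{(C1)}; hence $x(t)$ is bounded and admits weak sequential cluster points as $t\to+\infty$, and since $\|x(t)-y(t)\|\to0$ the trajectories $x(t)$ and $y(t)$ have exactly the same weak cluster points, all lying in $K$ because $y(t)\in K$ and $K$ is weakly closed. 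Fix $p\in\Omega_D$; by the preceding lemma there is at most one weak cluster point of $x(t)$ in $\Omega_D$, so we may take $p$ to be that point whenever it exists. Proposition~\ref{p3.5} then guarantees that every weak cluster point $w$ of $x(t)$ satisfies either $w\in\Omega_D$, whence $w=p$, or $F(w)=0$ with $w\notin\Omega_D$, so that $w\in A$; in both cases $w\in A_p=\{p\}\cup A$.

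Next I would invoke Lemma~\ref{l3.7}, which produces unit vectors $r_1,\dots,r_m$ and a constant $\delta_p>0$ for which the tubes $B(y,\delta_p)$, $y\in A_p$, are pairwise disjoint; consequently the smaller tubes $B(y,\delta_p/2)$ are pairwise disjoint as well, and each of them is open, being a finite intersection of slabs $\{x:|\langle r_i,x-y\rangle|<\delta_p/2\}$. Thus $\Omega:=\bigcup_{y\in A_p}B(y,\delta_p/2)$ is a disjoint union of open sets, and by the preceding lemma there exists $N_3\geq0$ with $x(t)\in\Omega$ for all $t\geq N_3$. Since $x(\cdot)$ is continuous, the set $x([N_3,+\infty))$ is connected, and a nonempty connected subset of a disjoint union of open sets is contained in exactly one of them; hence there is a single $y_0\in A_p$ with
\[
x(t)\in B(y_0,\delta_p/2)\qquad\text{for all } t\geq N_3 .
\]

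It then remains to verify that $y_0$ is the unique weak cluster point of $x(t)$. If $x(t_n)\rightharpoonup w$ with $t_n\to+\infty$, then for $n$ large $|\langle r_i,x(t_n)-y_0\rangle|<\delta_p/2$ for every $i$, and letting $n\to\infty$ gives $|\langle r_i,w-y_0\rangle|\le\delta_p/2<\delta_p$, i.e.\ $w\in B(y_0,\delta_p)$. Since $w\in A_p$ by the first paragraph, $y_0\in A_p$, the tubes $\{B(y,\delta_p):y\in A_p\}$ are pairwise disjoint, and trivially $y\in B(y,\delta_p)$ for each $y\in A_p$, it follows that $w=y_0$. Hence $x(t)$ is bounded with $y_0$ as its only weak sequential cluster point, so $x(t)\rightharpoonup y_0$ as $t\to+\infty$, and then $y(t)\rightharpoonup y_0$ because $\|x(t)-y(t)\|\to0$. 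Finally $y_0\in K$, and by Proposition~\ref{p3.5} either $y_0\in\Omega_D$, which is contained in the solution set of $VI(F,K)$ since $F$ is continuous and $K$ is convex, or $F(y_0)=0$, in which case $\langle F(y_0),y-y_0\rangle=0\ge0$ for all $y\in K$; in either case $y_0$ solves $VI(F,K)$.

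The step I expect to be the crux is the topological argument in the second paragraph: upgrading the statement that the trajectory eventually lies in a union of mutually separated tubes to the stronger statement that it eventually lies in one fixed tube, via connectedness of $x([N_3,+\infty))$. Everything else is either a direct appeal to the earlier results or the routine fact that a bounded trajectory with a unique weak cluster point converges weakly; the essential subtlety is that, in contrast with the pseudomonotone case, $\Omega_D$ alone cannot serve as the anchoring set because of the possible spurious zeros of $F$, and it is precisely the disjointness furnished by Lemma~\ref{l3.7} that prevents the trajectory from drifting between a genuine solution and such a zero.
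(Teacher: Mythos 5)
Your proposal is correct, and it reaches the conclusion by a genuinely different mechanism than the paper. The paper argues by contradiction: assuming two distinct weak cluster points $y_1\neq y_2$, it locates a single time $N_5$ at which $x$ lies in the tube $B(y_1,\delta_p/2)$ while the associated $y$ lies in $B(y_2,\delta_p/2)$, and then plays the resulting lower bound $\|x-y\|\geq\delta_p/2$ against the estimate $\|x(t)-y(t)\|<\delta_p/4$ from Proposition~\ref{p3.4} to force a contradiction; the uniqueness of the cluster point is thus extracted from the interplay between the two trajectories. You instead exploit a purely continuous-time feature: the trajectory $x([N_3,+\infty))$ is connected (by continuity of $x(\cdot)$), the tubes $B(y,\delta_p/2)$, $y\in A_p$, are pairwise disjoint open sets by Lemma~\ref{l3.7}, and a nonempty connected set inside a disjoint union of open sets sits in exactly one of them; weak lower semicontinuity of $|\langle r_i,\cdot\,\rangle|$ then pins every weak cluster point to the center $y_0$ of that tube. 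Your route buys a cleaner and arguably more rigorous argument -- the paper's claim that $x$ and $y$ simultaneously occupy tubes around two \emph{different} cluster points at a common time is the least transparent step of its proof, whereas your connectedness step is airtight -- at the cost of using continuity of the trajectory in an essential way, so it would not transfer verbatim to the discrete iteration, where the paper's style of argument (or your tube-limit argument applied to a subsequence) is the one that survives. Your closing observation that the limit $y_0$ genuinely belongs to $\Omega$ (either via $\Omega_D\subset\Omega$ or via $F(y_0)=0$ with $y_0\in K$) is a worthwhile addition that the paper's proof leaves implicit.
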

\begin{proof}
From Proposition~\ref{p3.4}, we know that \(\lim_{t \to \infty} \|x(t) - y(t)\| = 0\). Thus, there exists \(N_4 > N_3\) such that for all \(t \geq N_4\),
\begin{equation*}\label{3.7}
   \|x(t) - y(t)\| < \frac{\delta_p}{4}. \tag{3.7} 
\end{equation*}
Now, assume by contradiction that the sequence \(\{x(t)\}\) has more than one weak cluster point. Let \(y_1\) and \(y_2\) be two distinct weak cluster points of \(\{x(t)\}\) as \(t \to \infty\). According to Lemma~\ref{l3.7}, there exists \(N_5\geq T \geq N_4 > N_3\) such that \(x(t)_{N_5} \in \Omega(y_1, \frac{\delta_p}{2})\) and \(y(t)_{N_5} \in \Omega(y_2, \frac{\delta_p}{2})\). This implies that for \(i = 1, 2\),
\begin{equation*}\label{3.8}
    -\frac{\delta_p}{2} < \langle r_i, x(t)_{N_5} - y_1 \rangle < \frac{\delta_p}{2}. \tag{3.8}
\end{equation*}
Since \(y(t)_{N_5} \in \Omega(y_2, \frac{\delta_p}{2})\), it follows that \(y(t)_{N_5} \in \Omega(y_2, \delta_p)\). By the property of \(\Omega\), we have \(\Omega(y_1, \delta_p) \cap \Omega(y_2, \delta_p) = \emptyset\), which implies that \(y(t)_{N_5} \notin \Omega(y_1, \delta_p)\). Therefore, there exists \(r_j\) (for some \(j \in \{1, \dots, m\}\)) such that
\begin{equation*}\label{3.9}
    \langle r_j, y(t)_{N_5} - y_1 \rangle \geq \delta_p \quad \text{or} \quad \langle r_j, y(t)_{N_5} - y_1 \rangle \leq -\delta_p. \tag{3.9}
\end{equation*}
Substituting \(i = j\) into \eqref{3.8} and combining with \eqref{3.9}, we obtain
\[
\frac{\delta_p}{2} \leq |\langle r_j, y(t)_{N_5} - x(t)_{N_5} \rangle| \leq \|r_j\| \|y(t)_{N_5} - x(t)_{N_5}\| = \|y(t)_{N_5} - x(t)_{N_5}\|.
\]

From \eqref{3.7}, we know that
\[
\|y(t)_{N_5} - x(t)_{N_5}\| < \frac{\delta_p}{4}.
\]

This leads to a contradiction, as it follows that
\[
\frac{\delta_p}{2} \leq \|y(t)_{N_5} - x(t)_{N_5}\| < \frac{\delta_p}{4}.
\]

Therefore, the sequence \(\{x(t)\}\) must have exactly one weak cluster point in \(\Omega\). 
 Since \( \|x(t) - y(t)\| \to 0 \) as \( t \to \infty \), and \( \{x(t)\} \) has a unique weak cluster point \( x^* \), it follows that \( y(t) \) must also converge weakly to \( x^* \) as \( t \to \infty \).
This completes the proof.
\end{proof}

\begin{proposition}\label{p3.9}
    Suppose that assumptions \textbf{C1}–\textbf{C4} hold, $\lambda\in(0,\frac{1}{L})$, as well as the following condition:
\vskip 2mm 
\textbf{C5:} The sequence $\{y(t)\}\subset K$. for $\epsilon\geq0$, there is 
\begin{equation*}
   \liminf_{t\to\infty}\frac{\vert \langle F(y(t)),y(t)-u\rangle\vert}{\|y(t)-u\|^{2+\epsilon}}>0, \forall u\in K.  
\end{equation*}
\vskip 2mm 
Then, the trajectory $\{x(t)\}$ generated by \eqref{4} converges strongly to a point in $\Omega$.
\end{proposition}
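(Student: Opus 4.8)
The strategy is to upgrade the weak convergence $x(t)\rightharpoonup x^{*}$ (and, since $\|x(t)-y(t)\|\to 0$, also $y(t)\rightharpoonup x^{*}$), established in the preceding propositions, to strong convergence by showing that the residual $\langle F(y(t)),y(t)-x^{*}\rangle$ tends to $0$ and then invoking the error-bound-type hypothesis \textbf{C5} with $u=x^{*}$. First I would record the standing facts: the trajectory $x(t)$ is bounded (hence so is $y(t)$, by nonexpansiveness of $P_{K}$, and $\|y(t)-x^{*}\|$ is bounded), $x^{*}\in K$ because $K$ is weakly closed and $y(t)\in K$ with $y(t)\rightharpoonup x^{*}$, and $\|x(t)-y(t)\|\to 0$ by Proposition~\ref{p3.4}.

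The core step is the residual estimate
\[
\langle F(y(t)),\,y(t)-x^{*}\rangle\;\le\;\Big(L+\tfrac1\lambda\Big)\,\|x(t)-y(t)\|\,\|y(t)-x^{*}\|,\qquad t\ge 0,
\]
which I would obtain by applying the obtuse-angle property to $y(t)=P_{K}\big(x(t)-\lambda F(x(t))\big)$ with the test point $x^{*}\in K$ --- yielding $\langle F(x(t)),y(t)-x^{*}\rangle\le\tfrac1\lambda\langle x(t)-y(t),y(t)-x^{*}\rangle$ --- and then adding $\langle F(y(t))-F(x(t)),y(t)-x^{*}\rangle\le L\|x(t)-y(t)\|\,\|y(t)-x^{*}\|$ and estimating the remaining inner product by Cauchy--Schwarz. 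If the weak limit satisfies $x^{*}\in\Omega_{D}$, then since $y(t)\in K$ the Minty inequality gives $\langle F(y(t)),y(t)-x^{*}\rangle\ge 0$, so the displayed bound squeezes $\langle F(y(t)),y(t)-x^{*}\rangle$ between $0$ and a quantity tending to $0$ (as $\|y(t)-x^{*}\|$ stays bounded while $\|x(t)-y(t)\|\to0$); hence $\langle F(y(t)),y(t)-x^{*}\rangle\to 0$.

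Finally, in this case \textbf{C5} with $u=x^{*}$ provides $c>0$ and $T_{0}$ such that, for all $t\ge T_{0}$,
\[
\|y(t)-x^{*}\|^{2+\epsilon}\;\le\;\frac1c\,\langle F(y(t)),y(t)-x^{*}\rangle\;\longrightarrow\;0,
\]
so $\|y(t)-x^{*}\|\to0$, and then $\|x(t)-x^{*}\|\le\|x(t)-y(t)\|+\|y(t)-x^{*}\|\to0$; that is, $x(t)\to x^{*}$ strongly, with $x^{*}\in\Omega_{D}\subset\Omega$.

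The delicate point is the alternative permitted by Proposition~\ref{p3.5}: the weak limit $x^{*}$ may instead be a zero of $F$ outside $\Omega_{D}$ (an element of the set $A$ from \textbf{C4}), in which case the Minty inequality --- and hence the sign control $\langle F(y(t)),y(t)-x^{*}\rangle\ge 0$ --- is lost. There one must use $F(x^{*})=0$ and $L$-Lipschitz continuity to get $|\langle F(y(t)),y(t)-x^{*}\rangle|\le L\|y(t)-x^{*}\|^{2}$ and then play this lower-order control against the upper bound above together with \textbf{C5} to still force $\|y(t)-x^{*}\|\to 0$ (or else argue that this degenerate case cannot occur under \textbf{C1}--\textbf{C5}). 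I expect this to be the only real obstacle; once the residual estimate is available, the $\Omega_{D}$ case is routine.
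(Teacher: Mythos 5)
Your Case~I argument (weak limit $x^{*}\in\Omega_D$) is essentially the paper's: the obtuse-angle property plus Lipschitz continuity give
$\langle F(y(t)),y(t)-x^{*}\rangle\le\bigl(L+\tfrac1\lambda\bigr)\|x(t)-y(t)\|\,\|y(t)-x^{*}\|$, the Minty inequality supplies the sign, and \textbf{C5} converts the vanishing residual into $\|y(t)-x^{*}\|\to0$. The paper writes the same combination as $c\lambda\|y(t)-x^{*}\|^{2+\epsilon}\le(1+\lambda L)\|y(t)-x^{*}\|\,\|x(t)-y(t)\|$ and even extracts the rate $\|y(t)-x^{*}\|\le\bigl[\tfrac{1+\lambda L}{c\lambda}\bigr]^{1/(1+\epsilon)}\|x(t)-y(t)\|^{1/(1+\epsilon)}$, but that is only a cosmetic difference.

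The genuine gap is Case~II, $x^{*}\in A$ (so $F(x^{*})=0$ but $x^{*}\notin\Omega_D$), which you correctly flag as the obstacle but do not resolve. Your proposed fix fails: combining $|\langle F(y(t)),y(t)-x^{*}\rangle|\le L\|y(t)-x^{*}\|^{2}$ with \textbf{C5} gives $c\|y(t)-x^{*}\|^{2+\epsilon}\le L\|y(t)-x^{*}\|^{2}$, i.e.\ only the a priori bound $\|y(t)-x^{*}\|^{\epsilon}\le L/c$ (and nothing at all when $\epsilon=0$); since the exponent $2$ is \emph{smaller} than $2+\epsilon$, this cannot force convergence to zero. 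Likewise, without the Minty sign the one-sided upper bound on $\langle F(y(t)),y(t)-x^{*}\rangle$ is compatible with that inner product being large and negative, so no squeeze is available. The paper closes this case by a different mechanism: it first proves $\|F(y(t))\|\to0$ by contradiction --- if $\limsup_{t\to\infty}\|F(y(t))\|>0$, the quasimonotonicity argument of Proposition~\ref{p3.5} (Case~2) shows $x^{*}\in\Omega_D$, contradicting $x^{*}\in A$ --- and then uses Cauchy--Schwarz rather than Lipschitz continuity in \textbf{C5}:
\[
c\|y(t)-x^{*}\|^{2+\epsilon}\le|\langle F(y(t)),y(t)-x^{*}\rangle|\le\|F(y(t))\|\,\|y(t)-x^{*}\|,
\]
whence $c\|y(t)-x^{*}\|^{1+\epsilon}\le\|F(y(t))\|\to0$ and strong convergence follows. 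This step --- deducing $\|F(y(t))\|\to0$ from the dichotomy of Proposition~\ref{p3.5} --- is the missing idea; the degenerate case does occur in general and cannot simply be excluded under \textbf{C1}--\textbf{C5}.
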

\begin{proof}
    By assumption \textbf{C5}, there exist constants $c>0$ and $N'\in\mathbb{N}$ such that for all $t > N'$, we have
    \begin{equation*}\label{3.10}
        \vert \langle F(y(t)),y(t)-u\rangle\vert\geq c\|y(t)-u\|^{2+\epsilon}\geq 0. \tag{3.10}
    \end{equation*}
For the purpose of discussion, we will divide the proof into two cases as follows for
discussion.
\vskip 2mm 
\textbf{Case I:} If $x^*\in \Omega_{D} $, then from $y(t)\in K$, we can obtain 
\begin{equation*}
    \langle F(y(t)), y(t)-x^*\rangle\geq 0, 
\end{equation*}
hence, combining with \eqref{3.10}, we have 
\begin{equation*}
     \langle F(y(t)),y(t)-u\rangle\geq c\|y(t)-u\|^{2+\epsilon}\geq 0.
\end{equation*}
According to the Lipschitz continuous property of $F$, we get 
\begin{align*}
     \langle F(x(t)),x^*-y(t)\rangle& =  \langle F(x(t))-F(y(t)),x^*-y(t)\rangle- \langle F(y(t)),y(t)-x^*\rangle\\
     &\leq \| F(x(t))-F(y(t))\|\|y(t)-x^*\|-c\|y(t)-x^*\|^{2+\epsilon}\\
     &\leq L\|x(t)-y(t)\|\|y(t)-x^*\|-c\|y(t)-x^*\|^{2+\epsilon}, 
\end{align*}
which, in combination with $ \langle x(t) - \lambda F(x(t)) - y(t), y(t) - x^* \rangle \geq 0, \quad \forall t \in [0, +\infty)$, gives 
\begin{align*}
    \langle x^*-y(t),x(t)-y(t)\rangle&\leq \lambda\langle F(x(t)), x^*-y(t)\rangle\\
    &\leq \lambda L\|x(t)-y(t)\|\|y(t)-x^*\|-c\lambda\|y(t)-x^*\|^{2+\epsilon}. 
\end{align*}
which implies that 
\begin{align*}
    c\lambda\|y(t)-x^*\|^{2+\epsilon}&\leq \lambda L\|x(t)-y(t)\|\|y(t)-x^*\|- \langle x^*-y(t),x(t)-y(t)\rangle\\
    &\leq\lambda L\|x(t)-y(t)\|\|y(t)-x^*\|+\|x^*-y(t)\|\|x(t)-y(t)\|\\
     & = \left(\lambda L+1\right)\|x^*-y(t)\|\|x(t)-y(t)\|. 
\end{align*}
Due to $\|x(t)-y(t)\|\to 0$ as $t\to\infty$, we derive that 
\begin{align*}
    \|x(t)-x^*\|&\leq \|x(t)-y(t)\|+\|y(t)-x^*\|\\
    &\leq  \|x(t)-y(t)\|+\left[\frac{1+\lambda L}{c\lambda}\right]^{\frac{1}{1+\epsilon}}\cdot\|x(t)-y(t)\|^{\frac{1}{1+\epsilon}}\to 0, \quad \text{as} \quad t\to\infty.
\end{align*}
Thus, $x(t)\to x^*$ strongly.
\vskip 2mm 
\textbf{Case II:} If $x^*\notin\Omega_{D}$, i.e., $x^*\in A$, Next, we prove that $\|F(y(t))\|\to 0$ as $t\to\infty$. 
\vskip 2mm 
Since $x(t)\rightharpoonup x^*$ and $\|x(t)-y(t)\|\to 0$ as $t\to\infty$, thus  $y(t)\rightharpoonup x^*\in K$. Based on the Lipschitz continuity of $F$ and the sequence $y(t)$ is bounded, then $0\leq \limsup_{t\to\infty}\|F(y(t))\|<+\infty$. We adopt the method of proof by contradiction. Assume that $\limsup_{t\to\infty}\|F(y(t))\|\neq 0$, i.e., $\limsup_{t\to\infty}\|F(y(t))\|> 0$. Similar to the proof of Proposition \ref{p3.5}, we can prove that $x^*\in\Omega_{D}$, which contradicts $x^*\notin \Omega_{D}$, hence, $\|F(y(t))\|\to 0$ as $t\to\infty$. Therefore, by \eqref{3.10}, we obtain 
\begin{equation*}
    c\|y(t)-x^*\|^{2+\epsilon}\leq  \vert \langle F(y(t)),y(t)-x^*\rangle\vert\leq \|F(y(t))\|\|y(t)-x^*\|. 
\end{equation*}
Assume that the differential system does not stop within a finite number of steps, i.e., $\|y(t)-x^*\|\neq 0$, then 
\begin{equation*}
     c\|y(t)-x^*\|^{1+\epsilon}\leq \|F(y(t))\|, 
\end{equation*}
which means that $\|y(t)-x^*\|\to 0$ as $t\to\infty$. Hence, 
\begin{align*}
    \|x(t)-x^*\|\leq \|x(t)-y(t)\|+\|y(t)-x^*\|\to 0, \quad \text{as} \quad t\to\infty. 
\end{align*}
In conclusion, we finally obtain that the trajectory $x(t)$ converges strongly to $x^*\in \Omega$.     
\end{proof}

\begin{remark}
In above analysis, we introduced condition \textbf{C5} to replace the commonly used assumption of strong pseudomonotonicity for the operator $F$. This condition only requires an asymptotic nondegeneracy along the trajectory $\{y(t)\}$, namely that the normalized gap $\frac{|\langle F(y(t)), y(t) - u \rangle|}{\|y(t) - u\|^{2 + \epsilon}}$ remains bounded away from zero as $t \to \infty$. Importantly, \textbf{C5} is a significantly weaker requirement compared to strong pseudomonotonicity, which is a global property imposing a uniform quadratic growth lower bound for the residual across the entire feasible set $K$. Our result shows that under this milder assumption, the continuous dynamical system \eqref{4} still achieves strong convergence of its trajectory to a solution of the variational inequality problem.
\end{remark}
\vskip 2mm
We have established the weak convergence of the trajectories generated by the proposed dynamical system under mild assumptions, and further proved strong convergence under additional conditions. To complement these results, we now investigate the \emph{ergodic convergence} of the system.
Although strong convergence already guarantees the pointwise limit of the trajectory \(x(t)\), analyzing the ergodic behavior provides additional insights into the average long-term dynamics of the system. Such results can be useful, for instance, in evaluating the global stability of the method, or in justifying the effectiveness of time-averaged outputs in practical implementations.
\begin{theorem}
    Suppose that assumptions \textbf{C1}–\textbf{C5} are satisfied and $\lambda\in(0,\frac{1}{L})$. Then, the time-average \( \bar{x}(T) \) defined by
    \[
    \bar{x}(T) := \frac{1}{T} \int_0^T x(t)\,dt
    \]
    converges strongly to a solution of $\Omega$ as \( T \to \infty \).
\end{theorem}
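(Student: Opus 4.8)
The plan is to reduce the ergodic statement to the already-established strong convergence of $x(t)$. Proposition~\ref{p3.9} (together with the surrounding results) gives a point $x^* \in \Omega$ such that $\|x(t) - x^*\| \to 0$ as $t \to \infty$; the whole task is then to transfer this to the Cesàro average $\bar{x}(T)$. First I would write
\[
\bar{x}(T) - x^* = \frac{1}{T}\int_0^T \big(x(t) - x^*\big)\,dt,
\]
which is legitimate since $x(\cdot)$ is (locally absolutely) continuous and bounded, hence Bochner integrable on $[0,T]$, and $\tfrac{1}{T}\int_0^T x^*\,dt = x^*$. Taking norms and applying the triangle inequality for the Bochner integral yields
\[
\|\bar{x}(T) - x^*\| \le \frac{1}{T}\int_0^T \|x(t) - x^*\|\,dt.
\]
So it suffices to show the right-hand side tends to $0$, i.e. that the Cesàro mean of the scalar nonnegative function $\varphi(t) := \|x(t) - x^*\|$ vanishes at infinity.

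The second step is the elementary Cesàro lemma for scalar functions: if $\varphi \ge 0$ is locally integrable and $\varphi(t) \to 0$, then $\tfrac{1}{T}\int_0^T \varphi(t)\,dt \to 0$. I would prove this inline: fix $\epsilon > 0$, choose $N_\epsilon$ with $\varphi(t) < \epsilon/2$ for $t \ge N_\epsilon$, split the integral as $\tfrac{1}{T}\int_0^{N_\epsilon} \varphi + \tfrac{1}{T}\int_{N_\epsilon}^T \varphi$; the first term is $\le \tfrac{1}{T}\int_0^{N_\epsilon}\varphi(t)\,dt \to 0$ as $T \to \infty$ (the numerator is a fixed finite constant, using that $\varphi$ is bounded by Proposition~\ref{p3.4}), and the second is $\le \tfrac{(T - N_\epsilon)}{T}\cdot\tfrac{\epsilon}{2} \le \tfrac{\epsilon}{2}$. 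Hence the limsup is $\le \epsilon/2$ for every $\epsilon$, giving the claim. Combining with the displayed inequality above, $\|\bar{x}(T) - x^*\| \to 0$, and since $x^* \in \Omega$ this is exactly strong ergodic convergence to a solution of $\Omega$.

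I do not expect a genuine obstacle here; the only points requiring a word of care are the measurability/integrability of $t \mapsto x(t)$ (covered by the local absolute continuity established in Proposition~\ref{p3.4}) and the boundedness of $\varphi$ (also from Proposition~\ref{p3.4}, which shows $t \mapsto \|x(t) - x^l\|^2$ is nonincreasing, hence bounded). A cleaner alternative avoiding even the Cesàro lemma is to note that for $T > N_\epsilon$,
\[
\|\bar{x}(T) - x^*\| \le \frac{1}{T}\int_0^{N_\epsilon}\|x(t) - x^*\|\,dt + \frac{T - N_\epsilon}{T}\sup_{t \ge N_\epsilon}\|x(t) - x^*\|,
\]
and let $T \to \infty$ then $\epsilon \to 0$. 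I would present the argument in this compact form, citing Proposition~\ref{p3.9} for the existence of $x^*$ and its strong limit property, and Proposition~\ref{p3.4} for boundedness and regularity of the trajectory.
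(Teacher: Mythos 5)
Your proposal is correct, and at its core it follows the same strategy as the paper: invoke the strong convergence $x(t)\to x^*$ established in Proposition~\ref{p3.9}, then show that strong pointwise convergence of a bounded trajectory forces convergence of its Ces\`aro mean by splitting the integral at a time $T_0$ beyond which $\|x(t)-x^*\|<\varepsilon/2$. The difference is that your version is leaner. The paper, after the same split, further decomposes the tail integral through $y(t)$, writing $\|x(t)-x^*\|\le\|x(t)-y(t)\|+\|y(t)-x^*\|$, and controls the $\|x(t)-y(t)\|$ contribution via the Cauchy--Schwarz inequality together with the bound $\int_0^\infty\|x(t)-y(t)\|^2\,dt<\infty$ from Proposition~\ref{p3.4}. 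That detour is not needed once strong convergence of $x(t)$ itself is in hand, and your direct estimate
\[
\|\bar{x}(T)-x^*\|\le\frac{1}{T}\int_0^{T_0}\|x(t)-x^*\|\,dt+\frac{T-T_0}{T}\sup_{t\ge T_0}\|x(t)-x^*\|
\]
reaches the same conclusion with fewer ingredients. Your attention to the measurability and Bochner integrability of $t\mapsto x(t)$ (via local absolute continuity) and to the boundedness of $\|x(t)-x^*\|$ (via the nonincreasing property from Proposition~\ref{p3.4}) covers the only technical points that need care; nothing is missing.
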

\begin{proof}
From above analysis, we know $x(t)\to x^*$ and $\|x(t)-y(t)\|\to 0$ as $t\to\infty$. Hence, fix arbitrary $\varepsilon>0$. Then by the strong convergence of $x(t)$, there exists $T_1>0$ such that
\begin{equation*}
        \|x(t)-x^*\|<\frac{\varepsilon}{2}, \quad \forall t\ge T_1. 
\end{equation*}
Also, from $\int_0^{\infty}\|x(t)-y(t)\|^2dt<\infty$, there exists $T_2>0$ such that 
\begin{align*}
    \int_{T_2}^{\infty}\|x(t)-y(t)\|^2\,dt<\frac{\varepsilon^2}{8}.
\end{align*}
Let $T_0=\max\{T_1,T_2\}$. We split the time-average into two parts:  
\begin{equation*}
    \bar{x}(T)-x^* = \frac{1}{T} \int_0^{T_0} [x(t)-x^*]\,dt + \frac{1}
{T} \int_{T_0}^{T} [x(t)-x^*]\,dt. 
\end{equation*}
Taking norms and using the triangle inequality, we have 
\begin{equation*}
       \| \bar{x}(T)-x^*\|\leq \frac{1}{T} \int_0^{T_0} \|x(t)-x^*\|\,dt + \frac{1}
{T} \int_{T_0}^{T} \|x(t)-x^*\|\,dt =: I_{1}(T)+I_{2}(T). 
\end{equation*}
For the first term, since $x(t)$ is bounded, say $\|x(t)-x^*\|\le M$, then 
\begin{equation*}
    I_{1}(T) = \frac{1}{T} \int_0^{T_0} \|x(t)-x^*\|\,dt\leq \frac{1}{T} \int_0^{T_0} M\,dt=\frac{T_{0}}{T}M\to 0, \quad \text{as}\quad T\to\infty.
\end{equation*}
For the second term, split further using triangle inequality, which yields 
\begin{equation*}
     I_2(T) \le \frac{1}{T} \int_{T_0}^T \|x(t)-y(t)\|\,dt + \frac{1}{T} \int_{T_0}^T \|y(t)-x^*\|\,dt \ =: J_1(T) + J_2(T). 
\end{equation*}
Let \( f(t) = \|x(t) - y(t)\| \) and \( g(t) \equiv 1 \). By Cauchy-Schwarz inequality, we obtain 
\[
\int_{T_0}^T \|x(t) - y(t)\|\,dt = \int_{T_0}^T f(t)\cdot g(t)\,dt \le 
\left( \int_{T_0}^T \|x(t) - y(t)\|^2\,dt \right)^{1/2}
\left( \int_{T_0}^T 1^2\,dt \right)^{1/2}.
\]
Since \( \int_{T_0}^T 1^2\,dt = T - T_0 \), we get 
\[
\int_{T_0}^T \|x(t) - y(t)\|\,dt \le \sqrt{T - T_0} \left( \int_{T_0}^T \|x(t) - y(t)\|^2\,dt \right)^{1/2}, 
\]
which means that 
\begin{align*}
    J_{1}(T) &=  \frac{1}{T}\int_{T_0}^T \|x(t) - y(t)\|\,dt\leq  \frac{1}{T} \sqrt{T - T_0} \left( \int_{T_0}^T \|x(t) - y(t)\|^2\,dt \right)^{1/2}\\
    & \leq \frac{1}{T} \sqrt{T} \left( \int_{T_0}^T \|x(t) - y(t)\|^2\,dt \right)^{1/2}\leq \frac{1}{\sqrt{T}}\cdot \frac{\varepsilon}{2\sqrt{2}} = \frac{\varepsilon}{2\sqrt{2T}}. 
\end{align*}
Hence,  For large $T$, we have 
\begin{align*}
    J_{1}(T)<\frac{\varepsilon}{4}.
\end{align*}
Since for $t\ge T_0\ge T_1$, we have $\|x(t)-x^*\|<\frac{\varepsilon}{2}$, we get for all $t\ge T_0$: 
\begin{equation*}
     \|y(t)-x^*\| = \|y(t)-x(t)\|+\|x(t)-x^*\| < \frac{\varepsilon}{2} + \|x(t)-y(t)\|.
\end{equation*}
Thus 
\begin{align*}
    J_2(T)& = \frac{1}{T} \int_{T_0}^T \|y(t)-x^*\|\,dt 
    \le \frac{1}{T} \int_{T_0}^T \Bigl(\frac{\varepsilon}{2} + \|x(t)-y(t)\|\Bigr) dt\\
    &= \frac{T- T_0}{T}\frac{\varepsilon}{2} + J_1(T)<\frac{\varepsilon}{2}+\frac{\varepsilon}{4} = \frac{3\varepsilon}{4}.
\end{align*}
Combining these estimates, for sufficiently large $T$, 
\begin{equation*}
     \|\bar{x}(T)-x^*\| \le I_1(T) + J_1(T) + J_2(T) < 0 +
\frac{\varepsilon}{4} + \frac{3\varepsilon}{4} = \varepsilon. 
\end{equation*}
Thus $\bar{x}(T)\to x^*$ as $T\to\infty$, which completes the proof of ergodic convergence.
\end{proof}

\section{The forward-backward-forward algorithm based on the golden ratio and relaxation parameters}\label{sec:algorithm}
\quad\quad By applying an explicit time discretization to the dynamical system \eqref{4} with stepsize $\gamma_k > 0$ and an initial point $x_0 \in \mathcal{H}$, we obtain the following recursive formula for each iteration $k \geq 0$:
\[
x_k + \frac{x_{k+1} - x_k}{\gamma_k} = P_K(x_k - \lambda F(x_k)) + \lambda \left[ F(x_k) - F\left(P_K(x_k - \lambda F(x_k))\right) \right].
\]
To simplify notation and highlight the projection step, we define
\[
y_k := P_K(x_k - \lambda F(x_k)).
\]
Substituting this into the previous equation yields the equivalent iterative scheme:
\[
\begin{cases}
y_k = P_K(x_k - \lambda F(x_k)), \\
x_{k+1} = \gamma_k \left( y_k + \lambda (F(x_k) - F(y_k)) \right) + (1 - \gamma_k) x_k.
\end{cases}
\]
This iterative process corresponds precisely to the \emph{relaxed forward-backward-forward algorithm}, where $(\gamma_k)_{k \geq 0}$ acts as a sequence of relaxation parameters. Notably, when the relaxation parameter is set to $\gamma_k = 1$ for all $k$, the method reduces to the classical forward-backward-forward (FBF) algorithm originally proposed in \cite{a13}. 
\vskip 2mm 
In this section, we explore the iterative approximation of solutions to quasimonotone variational inequalities within reflexive Banach spaces by employing the Bregman distance function and Bregman projection. Unlike most existing works that assume Lipschitz continuity of the operator \(F\), our method only requires uniform continuity. Prior algorithms that drop the Lipschitz assumption typically rely on line search techniques to determine stepsize. In contrast, we propose an adaptive stepsize strategy that avoids line search, offering lower computational cost and broader applicability. To the best of our knowledge, this is the first Bregman-type relaxed FBF method under uniform continuity using adaptive stepsizes.
\vskip 2mm 
To improve the performance of the relaxed forward-backward-forward (FBF) algorithm, we incorporate an additional extrapolation step inspired by the golden ratio technique before each main iteration. Specifically, prior to computing $y_k$ and $x_{k+1}$, we introduce an extrapolated point $w_k$, defined as a convex combination of the current and previous iterates, guided by a golden ratio-based parameter. This extrapolation strategy has been shown in various studies to accelerate the convergence of iterative schemes.
\vskip 2mm 
The following conditions are required. 
\vskip 2mm 
\textbf{(D1)}: The solution set \( \Omega_{D} \) is nonempty; that is, \( \Omega_{D} \neq \emptyset \).

\vskip 2mm
\textbf{(D2)}: The operator \( F:E \to E^* \) is quasimonotone and \textbf{uniformly continuous} on $E$.
\vskip 2mm
\vskip 2mm
\textbf{(D3)}: The function $\Phi:E\to \mathbb{R}$ is strongly convex with modulus $\rho>\rho_{0}>0$, where $\rho_{0}$ is a given constant depending on the algorithm parameters. It is also Legendre, proper, and lower semicontinuous; uniformly Fréchet differentiable; and bounded on every bounded subset of $K$.
\vskip 2mm 
\textbf{(D4)}: The mapping \( F \) satisfies a sequential lower semicontinuity condition: whenever a sequence \( \{x_k\} \subset E \) converges weakly to \( x \), i.e., \( x_k \rightharpoonup x \), it holds that
\[
\|F(x)\| \leq \liminf_{k \to \infty} \|F(x_k)\|.
\]
\\
\noindent\rule[0.25\baselineskip]{\textwidth}{0.5pt}\\
\textbf{Algorithm 1}\\
\noindent\rule[0.25\baselineskip]{\textwidth}{0.5pt}

\vskip 1mm
\textbf{Initialization:} 
Choose parameters $\gamma_k = \gamma \in (0,1)$, $\mu \in (0,1)$, and $\psi \in (1, +\infty)$. 
Let $x_0, x_1 \in E$ be initial points, and let $\{\eta_k\}_{k\in\mathbb{N}} \subset [0, \infty)$ satisfy $\sum_{k=1}^{\infty} \eta_k < \infty$.

\vskip 2mm
\textbf{Iterative Steps:} For each iteration $k \geq 1$, compute $x_{k+1}$ as follows:

\vskip 2mm
\textbf{Step 1.} Compute extrapolation, projection, correction, and update:
\begin{equation*}
\begin{cases}
w_k = \nabla \Phi^{*} \left( \dfrac{(\psi - 1)\nabla \Phi(x_k) + \nabla \Phi(w_{k-1})}{\psi} \right), \\[1.5ex]
y_k = \operatorname{proj}_K^{\Phi} \left( \nabla \Phi^{*} \left( \nabla \Phi(w_k) - \lambda_k F(w_k) \right) \right), \\[1.5ex]
x_{k+1} = \nabla \Phi^{*} \left( (1 - \gamma) \nabla \Phi(x_k) + \gamma \left[ \nabla \Phi(y_k) - \lambda_k (F(y_k) - F(w_k)) \right] \right).
\end{cases}
\end{equation*}

\vskip 2mm
\textbf{Step 2.} Update the stepsize $\lambda_{k+1}$ by
\begin{equation}\label{e18}
\lambda_{k+1} =
\begin{cases}
\min\left\{ \dfrac{\mu \|w_k - y_k\|}{\|F(w_k) - F(y_k)\|},\; \lambda_k + \eta_k \right\}, & \text{if } F(w_k) \neq F(y_k), \\[1.5ex]
\lambda_k  + \eta_k, & \text{otherwise}.
\end{cases}
\end{equation}

\vskip 2mm
\textbf{Step 3.} Set $k := k + 1$ and return to \textbf{Step 1}.
\vskip 2mm

\noindent\rule[0.25\baselineskip]{\textwidth}{0.5pt}
\vskip 2mm 
\begin{lemma}\label{l4.1}
Let the sequence $\{\lambda_k\}$ be generated by Algorithm~1, and suppose that Assumption \textbf{(D2)} holds. Then:
\begin{itemize}
    \item[(i)] $\{\lambda_k\}$ is bounded above by $\lambda_1 + \Xi$, where $\Xi := \sum_{k=1}^\infty \eta_k$;
    \item[(ii)] $\{\lambda_k\}$ converges to some $\lambda \geq 0$, i.e., $\lim_{k \to \infty} \lambda_k = \lambda$;
    \item[(iii)] The series $\sum_{k=0}^\infty (\lambda_{k+1} - \lambda_k)_+$ and $\sum_{k=0}^\infty (\lambda_{k+1} - \lambda_k)_-$ both converge, where
    \[
    (\lambda_{k+1} - \lambda_k)_+ := \max\{0, \lambda_{k+1} - \lambda_k\}, \quad
    (\lambda_{k+1} - \lambda_k)_- := \max\{0, \lambda_k - \lambda_{k+1}\}.
    \]
\end{itemize}
\end{lemma}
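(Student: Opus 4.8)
The plan is to analyze the step-size recursion \eqref{e18} directly, treating the two cases separately and exploiting the summability of $\{\eta_k\}$. For part (i), I would show by induction that $\lambda_{k+1} \le \lambda_k + \eta_k$ in both branches of \eqref{e18}: in the ``otherwise'' branch this is an equality, and in the first branch the minimum is bounded above by its second argument $\lambda_k + \eta_k$. Unrolling this recursion gives $\lambda_{k+1} \le \lambda_1 + \sum_{j=1}^{k}\eta_j \le \lambda_1 + \Xi$, and since every term in \eqref{e18} is nonnegative (note $\mu\|w_k-y_k\|/\|F(w_k)-F(y_k)\| \ge 0$ and $\lambda_k + \eta_k \ge 0$ inductively from $\lambda_1 \ge 0$), the sequence stays in $[0,\lambda_1+\Xi]$.

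For part (ii), the obstacle is that $\{\lambda_k\}$ is not monotone, so boundedness alone is not enough. The standard remedy is to build an auxiliary monotone sequence: set $\zeta_k := \lambda_k + \sum_{j=k}^{\infty}\eta_j$ (the tail sum is finite by assumption). Then $\zeta_{k+1} - \zeta_k = (\lambda_{k+1} - \lambda_k) - \eta_k \le 0$ using $\lambda_{k+1} - \lambda_k \le \eta_k$ from part (i), so $\{\zeta_k\}$ is nonincreasing; it is also bounded below by $0$ since $\lambda_k \ge 0$ and the tail sum is nonnegative. Hence $\zeta_k$ converges, and because $\sum_{j=k}^{\infty}\eta_j \to 0$, we get $\lambda_k = \zeta_k - \sum_{j=k}^{\infty}\eta_j \to \lambda$ for some $\lambda \ge 0$. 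This is essentially the content already used in Lemma~\ref{l2.7}-style arguments, adapted to a signed increment.

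For part (iii), I would decompose the increment as $\lambda_{k+1} - \lambda_k = (\lambda_{k+1}-\lambda_k)_+ - (\lambda_{k+1}-\lambda_k)_-$. From part (i), $(\lambda_{k+1}-\lambda_k)_+ \le \eta_k$, so $\sum_k (\lambda_{k+1}-\lambda_k)_+ \le \Xi < \infty$. For the negative part, telescoping gives $\sum_{k=1}^{N}\big[(\lambda_{k+1}-\lambda_k)_+ - (\lambda_{k+1}-\lambda_k)_-\big] = \lambda_{N+1} - \lambda_1$, which converges to $\lambda - \lambda_1$ by part (ii); since the positive-part series converges, the negative-part series $\sum_k (\lambda_{k+1}-\lambda_k)_-$ must also converge (difference of a convergent series and a convergent partial-sum sequence). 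No single step here is genuinely hard; the only subtlety worth stating carefully is the non-monotonicity in part (ii), which is why the auxiliary sequence $\zeta_k$ is introduced rather than invoking monotone convergence directly on $\{\lambda_k\}$.
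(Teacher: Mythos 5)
Your proof is correct and complete, and it is essentially the standard argument for this kind of nonmonotone adaptive step-size rule: the paper itself gives no details here, deferring entirely to \cite{a25}, where the same ingredients ($\lambda_{k+1}\le\lambda_k+\eta_k$, summability of $\{\eta_k\}$, nonnegativity of $\lambda_k$, and the positive/negative increment decomposition) are used. Your only deviation is cosmetic — you derive (ii) first via the auxiliary nonincreasing sequence $\zeta_k=\lambda_k+\sum_{j\ge k}\eta_j$ and then get (iii) by telescoping, whereas the cited argument typically proves (iii) first (positive parts dominated by $\eta_k$; divergence of the negative-part series would force $\lambda_k\to-\infty$, contradicting $\lambda_k\ge 0$) and reads off (ii) — and your version correctly yields only $\lambda\ge 0$, consistent with the paper's remark that no positive lower bound on $\lambda_k$ is available under mere uniform continuity of $F$.
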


\begin{proof}
The proof of this lemma can refer to our earlier work \cite{a25}. The main difference is that we cannot obtain the lower bound value of the step size $\{\lambda_{k}\}$ when the operator $F$ is uniformly continuous.
\end{proof}
For the convergence analysis, we suppose that Algorithm 1 does not terminate after a finite number of iterations. 
\begin{lemma}\label{l4.2}
Assume that conditions \textbf{(D1)}-\textbf{(D3)} hold. Let $z_k = \nabla \Phi^* \left( \nabla \Phi(y_k) - \lambda_k \left(F(y_k) - F(w_k)\right) \right)$ and $\rho_{0}=1$. Then the sequence $\{x_k\}$ generated by Algorithm 1 satisfies the following:
\vskip 2mm
(1) The following limits hold:
\[
\lim_{k \to \infty} \|w_k - w_{k-1}\| = 
\lim_{k \to \infty} \|w_k - x_k\| = 
\lim_{k \to \infty} \|w_k - y_k\| = 
\lim_{k \to \infty} \|z_k - y_k\| = 0.
\]
(2) For any $p \in \Omega_D$, the Bregman distance $D_\Phi(p, x_k)$ converges; that is,
$\lim_{k \to \infty} D_\Phi(p, x_k) \text{ exists}$.
\end{lemma}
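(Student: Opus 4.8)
The plan is to derive a single Lyapunov inequality of the form $\Gamma_{k+1}\le\Gamma_k-Q_k+\theta_k$ with $Q_k\ge 0$ and $\sum_k\theta_k<\infty$, for the energy $\Gamma_k:=D_\Phi(p,x_k)+\tfrac{\gamma}{\psi-1}D_\Phi(p,w_{k-1})$, and then to apply Lemma~\ref{l2.7}. The starting point is to rewrite Step~1 in dual form: $\nabla\Phi(x_{k+1})=(1-\gamma)\nabla\Phi(x_k)+\gamma\nabla\Phi(z_k)$ and $\psi\,\nabla\Phi(w_k)=(\psi-1)\nabla\Phi(x_k)+\nabla\Phi(w_{k-1})$. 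Applying the identity~\eqref{2.3} to each of these two convex combinations decomposes $D_\Phi(p,x_{k+1})$ as a convex combination of $D_\Phi(p,x_k)$ and $D_\Phi(p,z_k)$ minus the nonnegative terms $(1-\gamma)D_\Phi(x_{k+1},x_k)$ and $\gamma D_\Phi(x_{k+1},z_k)$, and decomposes $D_\Phi(p,w_k)$ as a convex combination of $D_\Phi(p,x_k)$ and $D_\Phi(p,w_{k-1})$ minus $\tfrac{\psi-1}{\psi}D_\Phi(w_k,x_k)$ and $\tfrac1\psi D_\Phi(w_k,w_{k-1})$. Everything then reduces to controlling $D_\Phi(p,z_k)$ in terms of $D_\Phi(p,w_k)$.

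The core estimate bounds $D_\Phi(p,z_k)$. Using the three-point identity~\eqref{bregman_identity} together with $\nabla\Phi(z_k)-\nabla\Phi(y_k)=-\lambda_k\bigl(F(y_k)-F(w_k)\bigr)$ gives $D_\Phi(p,z_k)=D_\Phi(p,y_k)-D_\Phi(z_k,y_k)+\lambda_k\langle F(y_k)-F(w_k),\,p-z_k\rangle$. I would split $p-z_k=(p-y_k)+(y_k-z_k)$. For the first part, use $p\in\Omega_D$ and $y_k\in K$ (so $\langle F(y_k),p-y_k\rangle\le 0$) and the Bregman-projection inequality~\eqref{proj_vi} at the test point $p$, which turns $\lambda_k\langle F(w_k),y_k-p\rangle$ into $\langle\nabla\Phi(w_k)-\nabla\Phi(y_k),y_k-p\rangle$ and then, via~\eqref{bregman_identity} once more, into $D_\Phi(p,w_k)-D_\Phi(p,y_k)-D_\Phi(y_k,w_k)$; the $D_\Phi(p,y_k)$ terms cancel. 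For the second part, use Cauchy--Schwarz, the stepsize rule~\eqref{e18} in the form $\lambda_{k+1}\|F(w_k)-F(y_k)\|\le\mu\|w_k-y_k\|$, Young's inequality, and the strong-convexity bounds $D_\Phi(z_k,y_k)\ge\tfrac\rho2\|z_k-y_k\|^2$, $D_\Phi(y_k,w_k)\ge\tfrac\rho2\|y_k-w_k\|^2$. The target is $D_\Phi(p,z_k)\le D_\Phi(p,w_k)-\tfrac{\rho-\mu}{2}\bigl(\|z_k-y_k\|^2+\|y_k-w_k\|^2\bigr)+\theta_k^{(1)}$, with nonnegative coefficient since $\rho>\rho_0=1>\mu$, and $\theta_k^{(1)}$ a summable remainder addressed next.

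The step I expect to be the main obstacle is precisely that $\lambda_k$, not $\lambda_{k+1}$, multiplies $\|F(w_k)-F(y_k)\|$ there: unlike the Lipschitz setting, Algorithm~1 under mere uniform continuity gives no positive lower bound for $\{\lambda_k\}$ (cf.\ the remark after Lemma~\ref{l4.1}), so one cannot just pass $\lambda_k/\lambda_{k+1}\to1$. I would handle this by the estimate $\lambda_k\|F(w_k)-F(y_k)\|\le\mu\|w_k-y_k\|+C(\lambda_{k+1}-\lambda_k)_-$, valid for every $k$: if $\lambda_{k+1}\ge\lambda_k$ it is immediate from $\lambda_{k+1}\|F(w_k)-F(y_k)\|\le\mu\|w_k-y_k\|$; if $\lambda_{k+1}<\lambda_k$, the minimum in~\eqref{e18} must be attained at the first branch, so $\|F(w_k)-F(y_k)\|=\mu\|w_k-y_k\|/\lambda_{k+1}$, whence $\lambda_k\|F(w_k)-F(y_k)\|=\mu\|w_k-y_k\|+(\lambda_k-\lambda_{k+1})\|F(w_k)-F(y_k)\|$, and since uniform continuity makes $F$ bounded on bounded sets (Lemma~\ref{l2.2}), the correction is $\le C(\lambda_{k+1}-\lambda_k)_-$. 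By Lemma~\ref{l4.1}(iii) this is summable, so $\theta_k^{(1)}$ (a fixed multiple of $(\lambda_{k+1}-\lambda_k)_-$) is summable. The boundedness of the iterates that this presupposes is obtained beforehand from the Lyapunov inequality itself by a standard strong induction.

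Finally, substituting the $z_k$-estimate and the $w_k$-decomposition into the $x_{k+1}$-decomposition and collecting terms, the coefficient of $D_\Phi(p,x_k)$ becomes $1$ and that of $D_\Phi(p,w_{k-1})$ becomes $\tfrac{\gamma}{\psi-1}$ — which is exactly why $\tfrac{\gamma}{\psi-1}$ is the correct weight in $\Gamma_k$ — yielding $\Gamma_{k+1}\le\Gamma_k-Q_k+\theta_k$, where $Q_k$ is a nonnegative combination of $D_\Phi(w_k,x_k)$, $D_\Phi(w_k,w_{k-1})$, $D_\Phi(x_{k+1},x_k)$, $D_\Phi(x_{k+1},z_k)$ and $\|z_k-y_k\|^2+\|y_k-w_k\|^2$, and $\theta_k=\gamma\theta_k^{(1)}$. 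Lemma~\ref{l2.7} then gives that $\lim_k\Gamma_k$ exists and $Q_k\to 0$; strong convexity of $\Phi$ converts $Q_k\to 0$ into $\|w_k-x_k\|,\ \|w_k-w_{k-1}\|,\ \|w_k-y_k\|,\ \|z_k-y_k\|\to 0$, which is part~(1). For part~(2) one must still upgrade convergence of $\Gamma_k$ to convergence of $D_\Phi(p,x_k)$ alone: combining $\|w_{k-1}-x_{k-1}\|\to0$ and $\|x_k-x_{k-1}\|\to0$ with the uniform continuity of $\nabla\Phi$ on bounded sets guaranteed by~\textbf{(D3)} and the identity~\eqref{bregman_identity} shows $D_\Phi(p,w_{k-1})-D_\Phi(p,x_k)\to0$; substituting this into $\Gamma_k=D_\Phi(p,x_k)+\tfrac{\gamma}{\psi-1}D_\Phi(p,w_{k-1})$ forces $D_\Phi(p,x_k)$ to converge.
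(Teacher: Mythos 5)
Your overall architecture coincides with the paper's: the same Lyapunov quantity $\Gamma_k=D_\Phi(p,x_k)+\tfrac{\gamma}{\psi-1}D_\Phi(p,w_{k-1})$, the same decompositions via \eqref{2.3} and \eqref{bregman_identity}, the same reduction to bounding $D_\Phi(p,z_k)$ by $D_\Phi(p,w_k)$ minus quadratic terms, and the same appeal to Lemma~\ref{l2.7}; your route to part (2) via $D_\Phi(p,w_{k-1})-D_\Phi(p,x_k)\to 0$ is an acceptable variant of the paper's bookkeeping. The genuine divergence is in how the cross term $\lambda_k\langle z_k-y_k,F(w_k)-F(y_k)\rangle$ is tamed when $\{\lambda_k\}$ has no positive lower bound. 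The paper splits into the cases $\lambda_k\to\lambda>0$ and $\lambda_k\to 0$ (the latter into two sub-cases), and in the critical sub-case $\lambda_{k+1}<\lambda_k$ it combines Lemma~\ref{l2.2} with the identity $\|w_k-y_k\|=\lambda_{k+1}\|F(w_k)-F(y_k)\|/\mu$ to conclude $\|F(w_k)-F(y_k)\|\le 1$ for all large $k$ \emph{without any a priori boundedness of the iterates}. Your unified inequality $\lambda_k\|F(w_k)-F(y_k)\|\le\mu\|w_k-y_k\|+C(\lambda_{k+1}-\lambda_k)_-$ is elegant and avoids the case analysis, but the constant $C$ is a bound on $\|F(w_k)-F(y_k)\|$ that presupposes boundedness of $\{w_k\},\{y_k\}$, which in turn is extracted from the very Lyapunov inequality being proved. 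This is the one place where your argument is weaker than the paper's: "standard strong induction" does not immediately close the loop, because the summable perturbation $\sum_k\theta_k$ controlling the growth of $\Gamma_k$ itself depends (through $C$) on the uniform bound on $\Gamma_k$ you are trying to establish. The circle can be broken, but it requires a discrete Gronwall-type bootstrap: setting $B_k:=\max_{j\le k}\Gamma_j$ and noting that $C$ may be taken of the form $C_0(1+\sqrt{B_k})$ (via Lemma~\ref{l2.2} and strong convexity), one gets $B_{k+1}\le B_k+C_0(1+B_k)(\lambda_{k+1}-\lambda_k)_-$ and hence a finite uniform bound from Lemma~\ref{l4.1}(iii). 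You should either carry out this Gronwall step explicitly or adopt the paper's device from Lemma~\ref{l2.2}, which sidesteps the issue entirely; as written, the boundedness claim is the only unproved link in an otherwise correct chain.
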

\begin{proof}
By the definition of the Bregman distance, we have
\begin{align*}
    D_{\Phi}(p, z_k) 
    &= D_{\Phi}\left(p, \nabla \Phi^*\left(\nabla \Phi(y_k) - \lambda_k (F(y_k) - F( w_k))\right)\right) \\
    &= \Phi(p) - \langle p - z_k, \nabla \Phi(y_k) - \lambda_k (F(y_k) - F(w_k)) \rangle - \Phi(z_k) \\
    &= \Phi(p) - \langle p - y_k, \nabla \Phi(y_k) \rangle - \Phi(y_k) + \langle p - y_k, \nabla \Phi(y_k) \rangle + \Phi(y_k) - \Phi(z_k) \\
    &\quad - \langle p - z_k, \nabla \Phi(y_k) \rangle - \lambda_k \langle p - z_k, F(w_k) - F( y_k) \rangle \\
    &= D_{\Phi}(p, y_k) - \Phi(z_k) + \Phi(y_k) - \langle p - z_k, \nabla \Phi(y_k) \rangle \\
    &\quad + \langle p - y_k, \nabla \Phi(y_k) \rangle - \lambda_k \langle p - z_k, F(w_k) - F( y_k) \rangle \\
    &= D_{\Phi}(p, y_k) - D_{\Phi}(z_k, y_k) + \lambda_k \langle z_k - p, F(w_k) - F(y_k) \rangle. \tag{4.1}\label{4.1}
\end{align*}

Next, applying the three-point identity \eqref{bregman_identity} for the Bregman distance, we derive
\begin{align*}
    D_{\Phi}(p, y_k) 
    &= D_{\Phi}(p, w_k) - D_{\Phi}(y_k, w_k) + \langle p - y_k, \nabla \Phi(w_k) - \nabla \Phi(y_k) \rangle. \tag{4.2}\label{4.2}
\end{align*}

Combining equations \eqref{4.1} and \eqref{4.2}, we obtain
\begin{align*}
    D_{\Phi}(p, z_k) 
    &= D_{\Phi}(p, w_k) - D_{\Phi}(y_k, w_k) - D_{\Phi}(z_k, y_k) 
    + \langle p - y_k, \nabla \Phi(w_k) - \nabla \Phi(y_k) \rangle \\
    &\quad + \lambda_k \langle z_k - y_k, F w_k - F y_k \rangle + \lambda_k \langle p - y_k, F y_k \rangle. \tag{4.3}\label{4.3}
\end{align*}

Since \( p \in \Omega_D \subset \Omega \subset K \) and by the definition of \( y_k \), together with the obtuse-angle property, we have the inequality
\begin{equation*}
    \langle p - y_k, \nabla \Phi(w_k) - \lambda_k B w_k - \nabla \Phi(y_k) \rangle \leq 0. \tag{4.4}\label{4.4}
\end{equation*}

Moreover, as both \( y_k \in K \) and \( p \in \Omega_{D} \subset K \), we also have
\begin{equation*}
    \lambda_k \langle F y_k, p - y_k \rangle \leq 0. \tag{4.5}\label{4.5}
\end{equation*}

Substituting inequalities \eqref{4.4} and \eqref{4.5} into equation \eqref{4.3}, we deduce
\begin{align*}
    D_{\Phi}(p, z_k) 
    &\leq D_{\Phi}(p, w_k) - D_{\Phi}(y_k, w_k) - D_{\Phi}(z_k, y_k) + \lambda_k \langle z_k - y_k, F(w_k) - F(y_k) \rangle. \tag{4.6}\label{4.6}
\end{align*}
Next, we will consider the following two situations. 
\vskip 2mm 
\textbf{Case I:} Assume that $\lambda_{k}\to \lambda>0$ as $k\to\infty$. According to \eqref{4.6} and $D_{\Phi}(x,y)\geq \frac{\rho}{2}\|x-y\|^2\geq\frac{1}{2}\|x-y\|^2$, we obtain 
\begin{align*}
    D_{\Phi}(p, z_k) 
    &\leq D_{\Phi}(p, w_k) - D_{\Phi}(y_k, w_k) - D_{\Phi}(z_k, y_k) + \lambda_k \langle z_k - y_k, F(w_k) - F(y_k) \rangle\\
    &\leq D_{\Phi}(p, w_k) - D_{\Phi}(y_k, w_k) - D_{\Phi}(z_k, y_k) + \lambda_k \| z_k - y_k\| \|F(w_k) - F(y_k)\|\\
    & \leq D_{\Phi}(p, w_k) - D_{\Phi}(y_k, w_k) - D_{\Phi}(z_k, y_k) + \frac{\mu\lambda_{k}}{\lambda_{k+1}} \| z_k - y_k\| \|w_k - y_k\|\\
    & \leq D_{\Phi}(p, w_k) - D_{\Phi}(y_k, w_k) - D_{\Phi}(z_k, y_k) + \frac{\mu\lambda_{k}}{2\lambda_{k+1}} \| z_k - y_k\|^2 +\frac{\mu\lambda_{k}}{2\lambda_{k+1}}\|w_k - y_k\|^2\\
    & \leq D_{\Phi}(p, w_k) -\left(1-\frac{\mu\lambda_{k}}{\rho\lambda_{k+1}}\right) D_{\Phi}(y_k, w_k) -\left(1-\frac{\mu\lambda_{k}}{\rho\lambda_{k+1}}\right) D_{\Phi}(z_k, y_k).\tag{4.7}\label{4.7}
\end{align*}
It follows from \eqref{2.3} and $\nabla \Phi(x_{k})=\frac{\psi}{\psi-1}\nabla {\Phi}(w_{k})-\frac{1}{\psi-1}\nabla{\Phi}(w_{k-1})$ that 
\begin{equation*}
    D_{\Phi}(p,x_{k}) = \frac{\psi}{\psi-1}\left[D_{\Phi}(p,w_{k})-D_{\Phi}(x_{k},w_{k})\right]-\frac{1}{\psi-1}\left[D_{\Phi}(p,w_{k-1})-D_{\Phi}(x_{k},w_{k-1})\right].
\end{equation*}
By rearranging the above equation, we get 
\begin{equation*}
    \frac{\psi}{\psi-1}D_{\Phi}(p,w_{k}) = \frac{1}{\psi-1}D_{\Phi}(p,w_{k-1})-\frac{1}{\psi-1}D_{\Phi}(x_{k},w_{k-1})+\frac{\psi}{\psi-1}D_{\Phi}(x_{k},w_{k})+D_{\Phi}(p,x_{k}). \tag{4.8}\label{4.8}
\end{equation*}
Using \eqref{2.3} and $\nabla \Phi(w_{k})=\frac{\psi-1}{\psi}\nabla {\Phi}(x_{k})+\frac{1}{\psi}\nabla{\Phi}(w_{k-1})$ again, we conclude that 
\begin{align*}
    D_{\Phi}(x_{k},w_{k}) = \frac{1}{\psi} D_{\Phi}(x_{k},w_{k-1})-\frac{1}{\psi} D_{\Phi}(w_{k},w_{k-1})-\frac{\psi-1}{\psi} D_{\Phi}(w_{k},x_{k}). \tag{4.9}\label{4.9}
\end{align*}
Substituting \eqref{4.9} into \eqref{4.8} yields the result
\begin{align*}
    \left(1+\frac{1}{\psi-1}\right)D_{\Phi}(p,w_{k}) & = \frac{\psi}{\psi-1}\left(\frac{1}{\psi} D_{\Phi}(x_{k},w_{k-1})-\frac{1}{\psi} D_{\Phi}(w_{k},w_{k-1})-\frac{\psi-1}{\psi} D_{\Phi}(w_{k},x_{k})\right)\\
    &\quad+D_{\Phi}(p,x_{k})+\frac{1}{\psi-1}D_{\Phi}(p,w_{k-1})-\frac{1}{\psi-1}D_{\Phi}(x_{k},w_{k-1})\\
    & = D_{\Phi}(p,x_{k})+\frac{1}{\psi-1}D_{\Phi}(p,w_{k-1})-\frac{1}{\psi-1} D_{\Phi}(w_{k},w_{k-1})-D_{\Phi}(w_{k},x_{k})\tag{4.10}\label{4.10}
\end{align*}
Substituting the above equation into \eqref{4.7} yields
\begin{align*}
     D_{\Phi}(p, z_k)&\leq -\left(1-\frac{\mu\lambda_{k}}{\rho\lambda_{k+1}}\right) D_{\Phi}(y_k, w_k) -\left(1-\frac{\mu\lambda_{k}}{\rho\lambda_{k+1}}\right) D_{\Phi}(z_k, y_k)+D_{\Phi}(p, x_k)\\
     &\quad -\frac{1}{\psi-1}D_{\Phi}(p, w_k)+\frac{1}{\psi-1}D_{\Phi}(p, w_{k-1})-\frac{1}{\psi-1}D_{\Phi}(w_k, w_{k-1})-D_{\Phi}(w_{k},x_{k}).\tag{4.11}\label{4.11}
\end{align*}
In addition, from the definition of $\{x_{k+1}\}$ and \eqref{bregman_jensen}, we obtain 
\begin{align*}
    D_{\Phi}(p,x_{k+1}) &= D_{\Phi}(p,\nabla \Phi^{*} \left( (1 - \gamma) \nabla \Phi(x_k) + \gamma \nabla \Phi(z_k) \right))
    \leq (1-\gamma)D_{\Phi}(p,x_{k})+\gamma D_{\Phi}(p,z_{k}).\tag{4.12}\label{4.12}
\end{align*}
Similarly, according to \eqref{4.11} and \eqref{4.12}, it follows that 
\begin{align*}
    D_{\Phi}(p,x_{k+1})+\frac{\gamma}{\psi-1}D_{\Phi}(p,w_{k})&\leq  D_{\Phi}(p,x_{k})+\frac{\gamma}{\psi-1}D_{\Phi}(p,w_{k-1})-\frac{\gamma}{\psi-1}D_{\Phi}(w_k,w_{k-1})-\gamma D_{\Phi}(w_{k},x_{k})\\
    &\quad -\gamma\left(1-\frac{\mu\lambda_{k}}{\rho\lambda_{k+1}}\right) D_{\Phi}(y_k, w_k) -\gamma\left(1-\frac{\mu\lambda_{k}}{\rho\lambda_{k+1}}\right) D_{\Phi}(z_k, y_k).\tag{4.13}\label{4.13}
\end{align*}
Let $\Upsilon_{k} = D_{\Phi}(p,x_{k})+\frac{\gamma}{\psi-1}D_{\Phi}(p,w_{k-1})$ and 
\begin{equation*}
   \Theta_{k}=\frac{\gamma}{\psi-1}D_{\Phi}(w_k,w_{k-1})+\gamma D_{\Phi}(w_{k},x_{k})+\gamma\left(1-\frac{\mu\lambda_{k}}{\rho\lambda_{k+1}}\right) D_{\Phi}(y_k, w_k)+\gamma\left(1-\frac{\mu\lambda_{k}}{\rho\lambda_{k+1}}\right) D_{\Phi}(z_k, y_k). 
\end{equation*}
Hence, we derive that 
\begin{align*}
    \Upsilon_{k+1}\leq \Upsilon_{k}-\Theta_{k}. 
\end{align*}
According to Lemma \ref{l2.7}, it is not difficult to see that the limit of $\Theta_{k}\to 0$ as $k\to\infty$ and $\lim_{k\to\infty}\Upsilon_{k}$ exists. Therefore, 
\begin{equation*}
    \lim_{k\to\infty}D_{\Phi}(w_k,w_{k-1}) =  \lim_{k\to\infty}D_{\Phi}(w_{k},x_{k}) =   \lim_{k\to\infty}D_{\Phi}(y_k, w_k) =  \lim_{k\to\infty}D_{\Phi}(z_k, y_k) =0.   
\end{equation*}
which implies that $
\lim_{k \to \infty} \|w_k - w_{k-1}\| = 
\lim_{k \to \infty} \|w_k - x_k\| = 
\lim_{k \to \infty} \|w_k - y_k\| = 
\lim_{k \to \infty} \|z_k - y_k\| = 0$ 
and $\lim_{k\to\infty}D_{\Phi}(x_{k},w_{k-1})=0$. 
\vskip 2mm 
In addition, from the definition of $\{w_{k}\}$, we have 
\begin{align*}
    \Upsilon_k &= D_{\Phi}(p,x_{k})+\frac{\gamma}{\psi-1}D_{\Phi}(p,w_{k-1})\\
    & =\frac{\psi}{\psi-1}\left[D_{\Phi}(p,w_{k})-D_{\Phi}(x_{k},w_{k})\right]-\frac{1}{\psi-1}\left[D_{\Phi}(p,w_{k-1})-D_{\Phi}(x_{k},w_{k-1})\right]+\frac{\gamma}{\psi-1}D_{\Phi}(p,w_{k-1})\\
    &\leq \frac{\psi}{\psi-1}D_{\Phi}(p,w_{k})-\frac{\psi}{\psi-1}D_{\Phi}(x_{k},w_{k})+\frac{1}{\psi-1}D_{\Phi}(x_{k},w_{k-1})\\
    &\quad-\frac{\gamma}{\psi-1}D_{\Phi}(p,w_{k-1})+\frac{\gamma}{\psi-1}D_{\Phi}(p,w_{k-1})\\
    & = \frac{\psi}{\psi-1}D_{\Phi}(p,w_{k})-\frac{\psi}{\psi-1}D_{\Phi}(x_{k},w_{k})+\frac{1}{\psi-1}D_{\Phi}(x_{k},w_{k-1}).\\
\end{align*}
Because of $\lim_{k\to\infty}D_{\Phi}(x_{k},w_{k})=\lim_{k\to\infty}D_{\Phi}(x_{k},w_{k-1})=0$ and the limit of $\Upsilon_{k}$ exists, we obtain easily that the limit of $D_{\Phi}(p,w_{k})$ exists. Hence, it turns out that $\lim_{k \to \infty} D_\Phi(p, x_{k})$  exists and the sequence $\{x_{k}\}$ and $\{w_{k}\}$ are bounded. 
\vskip 2mm 
\textbf{Case II:} Assume that $\lambda_{k}\to 0$ as $k\to\infty$. Using Lemma \ref{l2.2}, by taking $\epsilon=\frac{1}{2}$, we conclude that 
\begin{equation*}
    \|F(w_{k})-F(y_{k})\|\leq M\|w_{k}-y_{k}\|+\frac{1}{2}. \tag{4.14}\label{4.14}
\end{equation*}
According to $\lim_{k\to\infty}\lambda_{k} = 0$, there exists $K$ such that for all $k>K$, $\lambda_{k}<\frac{\mu}{2M}$. Next, we will consider the following two situations.
\vskip 2mm 
\textbf{Case A:} If $\lambda_{k+1}\geq \lambda_{k}$. According to the definition of $\lambda_{k}$, we get 
\begin{equation*}
    \lambda_{k+1}\leq \frac{\mu \|w_{k}-y_{k}\|}{  \|F(w_{k})-F(y_{k})\|}. 
\end{equation*}
Hence, it follows from \eqref{4.6} that 
\begin{align*}
    D_{\Phi}(p, z_k) 
    &\leq D_{\Phi}(p, w_k) - D_{\Phi}(y_k, w_k) - D_{\Phi}(z_k, y_k) + \lambda_k \langle z_k - y_k, F(w_k) - F(y_k) \rangle\\
    &\leq D_{\Phi}(p, w_k) - D_{\Phi}(y_k, w_k) - D_{\Phi}(z_k, y_k) + \lambda_k \| z_k - y_k\| \|F(w_k) - F(y_k)\|\\
    & \leq D_{\Phi}(p, w_k) - D_{\Phi}(y_k, w_k) - D_{\Phi}(z_k, y_k) + \frac{\mu\lambda_{k}}{\lambda_{k+1}} \| z_k - y_k\| \|w_k - y_k\|\\
    & \leq D_{\Phi}(p, w_k) -\left(1-\frac{\mu\lambda_{k}}{\rho\lambda_{k+1}}\right) D_{\Phi}(y_k, w_k) -\left(1-\frac{\mu\lambda_{k}}{\rho\lambda_{k+1}}\right) D_{\Phi}(z_k, y_k).\\
    & \leq D_{\Phi}(p, w_k) -\left(1-\frac{\mu\lambda_{k}}{\rho\lambda_{k}}\right) D_{\Phi}(y_k, w_k) -\left(1-\frac{\mu\lambda_{k}}{\rho\lambda_{k}}\right) D_{\Phi}(z_k, y_k).\\
    & \leq D_{\Phi}(p, w_k) -\left(1-\frac{\mu}{\rho}\right) D_{\Phi}(y_k, w_k) -\left(1-\frac{\mu}{\rho}\right) D_{\Phi}(z_k, y_k).
\end{align*}
The following proof is similar to the above \textbf{Case I}, so we omitted.
\vskip 2mm 
\textbf{Case B:} If $\lambda_{k+1}< \lambda_{k}$. Then 
\begin{equation*}
    \lambda_{k+1}= \frac{\mu \|w_{k}-y_{k}\|}{  \|F(w_{k})-F(y_{k})\|}. 
\end{equation*}
Hence, from \eqref{4.14}, we get 
\begin{equation*}
    \|\|F(w_{k})-F(y_{k})\|\leq \frac{M}{\mu}\lambda_{k+1}\|F(w_{k})-F(y_{k})\|+\frac{1}{2}.
\end{equation*}
According to $\lambda_{k+1}<\lambda_{k}<\frac{\mu}{2M}$. Hence, we have 
\begin{equation*}
    \|F(w_{k})-F(y_{k})\|\leq 1. 
\end{equation*}
Similarly, from \eqref{4.6} and  $D_{\Phi}(x,y)\geq \frac{\rho}{2}\|x-y\|^2\geq\frac{1}{2}\|x-y\|^2$, it follows that 
\begin{align*}
      D_{\Phi}(p, z_k) 
    &\leq D_{\Phi}(p, w_k) - D_{\Phi}(y_k, w_k) - D_{\Phi}(z_k, y_k) + \lambda_k \langle z_k - y_k, F(w_k) - F(y_k) \rangle\\
    &\leq D_{\Phi}(p, w_k) - D_{\Phi}(y_k, w_k) - D_{\Phi}(z_k, y_k) + \lambda_k \| z_k - y_k\| \|F(w_k) - F(y_k)\|\\
   & \leq D_{\Phi}(p, w_k) - D_{\Phi}(y_k, w_k) - D_{\Phi}(z_k, y_k)+\frac{1}{2}\lambda_{k}^2\|F(w_{k})-F(y_{k})\|^2+\frac{1}{2}\|z_{k}-y_{k}\|^2\\
    & \leq D_{\Phi}(p, w_k) - D_{\Phi}(y_k, w_k) - D_{\Phi}(z_k, y_k)+\frac{1}{2}\lambda_{k}^2\|F(w_{k})-F(y_{k})\|^2+\frac{1}{\rho}D_{\Phi}(z_{k},y_{k})\\
    & \leq D_{\Phi}(p, w_k) - D_{\Phi}(y_k, w_k) - (1-\frac{1}{\rho})D_{\Phi}(z_k, y_k)+\frac{1}{2}\lambda_{k}^2\|F(w_{k})-F(y_{k})\|^2\\
    & \leq D_{\Phi}(p, w_k)-(1-\frac{1}{\rho})D_{\Phi}(z_k, y_k)-\frac{\rho}{2}\|y_{k}-w_{k}\|^2+\frac{1}{2}\lambda_{k}^2\|F(w_{k})-F(y_{k})\|^2\\
     & \leq D_{\Phi}(p, w_k)-(1-\frac{1}{\rho})D_{\Phi}(z_k, y_k)-\frac{\rho}{2}\frac{\lambda_{k+1}^2}{\mu^2}\|F(y_{k})-F(w_{k})\|^2+\frac{1}{2}\lambda_{k}^2\|F(w_{k})-F(y_{k})\|^2\\
     & = D_{\Phi}(p, w_k)-(1-\frac{1}{\rho})D_{\Phi}(z_k, y_k)-\frac{\rho}{2}(\frac{1}{\mu^2}-1)\lambda_{k+1}^2\|F(y_{k})-F(w_{k})\|^2\\
     &\quad -\frac{\rho}{2}\lambda_{k+1}^2\|F(y_{k})-F(w_{k})\|^2+\frac{1}{2}\lambda_{k}^2\|F(w_{k})-F(y_{k})\|^2\\
     &= D_{\Phi}(p, w_k)-(1-\frac{1}{\rho})D_{\Phi}(z_k, y_k)-\frac{\rho}{2}(1-\mu^2)\frac{\lambda_{k+1}^2}{\mu^2}\|F(y_{k})-F(w_{k})\|^2+\frac{1}{2}\lambda_{k}^2-\frac{\rho}{2}\lambda_{k+1}^2\\
       &= D_{\Phi}(p, w_k)-(1-\frac{1}{\rho})D_{\Phi}(z_k, y_k)-\frac{\rho}{2}(1-\mu^2)\|y_{k}-w_{k}\|^2+\frac{\rho}{2}\lambda_{k}^2-\frac{\rho}{2}\lambda_{k+1}^2\\
        &= D_{\Phi}(p, w_k)-(1-\frac{1}{\rho})D_{\Phi}(z_k, y_k)-\frac{\rho}{2}(1-\mu^2)\|y_{k}-w_{k}\|^2 +\frac{\rho}{2}(\lambda_{k+1}+\lambda_{k})(\lambda_{k+1}-\lambda_{k})_{-}\\
         &= D_{\Phi}(p, w_k)-(1-\frac{1}{\rho})D_{\Phi}(z_k, y_k)-\frac{\rho}{2}(1-\mu^2)\|y_{k}-w_{k}\|^2 +\frac{\rho}{2}\frac{\mu}{M}(\lambda_{k+1}-\lambda_{k})_{-}\tag{4.15}\label{4.15}.
\end{align*}
Furthermore, according to \eqref{4.10} and the definition of $\{x_{k+1}\}$, we have 
\begin{align*}
    D_{\Phi}(p,z_{k})&\leq D_{\Phi}(p,x_{k})-\frac{1}{\psi-1}D_{\Phi}(p, w_k)+\frac{1}{\psi-1}D_{\Phi}(p, w_{k-1})-\frac{1}{\psi-1}D_{\Phi}(w_k, w_{k-1})-D_{\Phi}(w_{k},x_{k})\\
    &\quad -(1-\frac{1}{\rho})D_{\Phi}(z_k, y_k)-\frac{\rho}{2}(1-\mu^2)\|y_{k}-w_{k}\|^2 +\frac{\rho}{2}\frac{\mu}{M}(\lambda_{k+1}-\lambda_{k})_{-}, 
\end{align*}
and 
\begin{align*}
    D_{\Phi}(p,x_{k+1})+\frac{\gamma}{\psi-1}D_{\Phi}(p,w_{k})&\leq  D_{\Phi}(p,x_{k})+\frac{\gamma}{\psi-1}D_{\Phi}(p,w_{k-1})-\frac{\gamma}{\psi-1}D_{\Phi}(w_{k},w_{k-1})-\gamma D_{\Phi}(w_{k},x_{k})\\
    &\quad -\gamma(1-\frac{1}{\rho})D_{\Phi}(z_k, y_k)-\gamma\frac{\rho}{2}(1-\mu^2)\|y_{k}-w_{k}\|^2 +\frac{\gamma \rho}{2}\frac{\mu}{M}(\lambda_{k+1}-\lambda_{k})_{-}.\tag{4.16}\label{4.16} 
\end{align*}
Similarly, Let $\hat{\Upsilon}_{k} = D_{\Phi}(p,x_{k})+\frac{\gamma}{\psi-1}D_{\Phi}(p,w_{k-1})$, $\theta_{k} = \frac{\gamma \rho}{2}\frac{\mu}{M}(\lambda_{k+1}-\lambda_{k})_{-}$ and 
\begin{align*}
    \hat{\Theta}_{k} = \frac{\gamma}{\psi-1}D_{\Phi}(w_{k},w_{k-1})+\gamma D_{\Phi}(w_{k},x_{k})+\gamma(1-\frac{1}{\rho})D_{\Phi}(z_k, y_k)+\gamma\frac{\rho}{2}(1-\mu^2)\|y_{k}-w_{k}\|^2.
\end{align*}
Hence, we have 
\begin{equation*}
    \hat{\Upsilon}_{k+1}\leq \hat{\Upsilon}_{k}-\hat{\Theta}_{k}+\theta_{k}. 
\end{equation*}
Due to $\sum_{k=0}^\infty (\lambda_{k+1} - \lambda_k)_-<\infty$, we conclude that the limit of $\hat{\Theta}_{k}\to 0$ as $k\to\infty$ and $\lim_{k\to\infty}\hat{\Upsilon}_{k}$ exists.
The following proof is similar to the above \textbf{Case I}, so we omit.
\vskip 2mm 
Hence, combining the above discussion, conclusions (1) and (2) still hold, which completes the proof. 
\end{proof}
\begin{lemma}\label{l4.3}
    Suppose that $\lambda_k \to 0$ as $k \to \infty$, and define the index set $\mathcal{I} := \{k \in \mathbb{N} : \lambda_{k+1} < \lambda_k\}$. Let $\{k_i\}_{i \geq 1}$ denote the strictly increasing sequence of indices in $\mathcal{I}$. Then, for any $p \in \Omega_D$ and for all $k_i \in \mathcal{I}$, the following assertions hold:
    \vskip 2mm 
    (i) $\displaystyle\lim_{i \to \infty} \frac{\|w_{k_i} - y_{k_i}\|}{\lambda_{k_i}} = 0$;
    \vskip 2mm 
    (ii) $\displaystyle\lim_{i \to \infty} \frac{1}{\lambda_{k_i}} \left( D_{\Phi}(p, x_{k_{i+1}}) - D_{\Phi}(p, x_{k_i}) + \frac{\gamma}{\psi - 1} \left( D_{\Phi}(p, w_{k_i}) - D_{\Phi}(p, w_{k_{i-1}}) \right) \right) = 0$.
\end{lemma}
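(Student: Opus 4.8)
\emph{Proof plan.} Part~(i) is almost immediate. By Lemma~\ref{l4.2}(1) we have $\|w_k-y_k\|\to0$, and since $F$ is uniformly continuous (\textbf{(D2)}), this forces $\|F(w_k)-F(y_k)\|\to0$. For $k_i\in\mathcal I$ we must have $\lambda_{k_i+1}<\lambda_{k_i}\le\lambda_{k_i}+\eta_{k_i}$, so in \eqref{e18} necessarily $F(w_{k_i})\neq F(y_{k_i})$ and the first branch attains the minimum, i.e. $\lambda_{k_i+1}=\mu\|w_{k_i}-y_{k_i}\|/\|F(w_{k_i})-F(y_{k_i})\|$. Hence
\[
0\le\frac{\|w_{k_i}-y_{k_i}\|}{\lambda_{k_i}}\le\frac{\|w_{k_i}-y_{k_i}\|}{\lambda_{k_i+1}}=\frac{\|F(w_{k_i})-F(y_{k_i})\|}{\mu}\xrightarrow[i\to\infty]{}0 .
\]

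For part~(ii), first observe that the bracketed expression equals $\Upsilon_{k_i+1}-\Upsilon_{k_i}$, where $\Upsilon_k:=D_\Phi(p,x_k)+\tfrac{\gamma}{\psi-1}D_\Phi(p,w_{k-1})$ is exactly the energy used in the proof of Lemma~\ref{l4.2} (Case~II). To bound it from above I would revisit \eqref{4.16}, but keep the perturbation in the sharper form $\tfrac{\gamma\rho}{2}\bigl(\lambda_{k_i}^2-\lambda_{k_i+1}^2\bigr)$ (i.e. \emph{before} bounding $\lambda_{k_i}+\lambda_{k_i+1}$ by $\mu/M$); dropping the nonnegative descent term gives $\Upsilon_{k_i+1}-\Upsilon_{k_i}\le\tfrac{\gamma\rho}{2}\bigl(\lambda_{k_i}^2-\lambda_{k_i+1}^2\bigr)\le\tfrac{\gamma\rho}{2}\lambda_{k_i}^2$, so that $\tfrac1{\lambda_{k_i}}(\Upsilon_{k_i+1}-\Upsilon_{k_i})\le\tfrac{\gamma\rho}{2}\lambda_{k_i}\to0$ (here we use $\lambda_{k_i}\to0$, the defining hypothesis). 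Thus $\limsup_i\tfrac1{\lambda_{k_i}}(\Upsilon_{k_i+1}-\Upsilon_{k_i})\le0$.

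For the reverse inequality I would write $\Upsilon_{k_i+1}-\Upsilon_{k_i}$ as an \emph{exact} identity. Expanding $D_\Phi(p,w_k)$ and $D_\Phi(p,x_{k+1})$ with the identities \eqref{bregman_identity} and \eqref{2.3} along $\nabla\Phi(w_k)=\tfrac{\psi-1}{\psi}\nabla\Phi(x_k)+\tfrac1\psi\nabla\Phi(w_{k-1})$ and $\nabla\Phi(x_{k+1})=(1-\gamma)\nabla\Phi(x_k)+\gamma\nabla\Phi(z_k)$, and using \eqref{4.3} together with the projection characterization \eqref{proj_vi} for $y_k$ and with $p\in\Omega_D$, all ``$D_\Phi(p,\cdot)$''-terms cancel and one is left with
\begin{align*}
\Upsilon_{k+1}-\Upsilon_k
&=-\tfrac{\gamma}{\psi-1}D_\Phi(w_k,w_{k-1})-\gamma D_\Phi(w_k,x_k)-\gamma D_\Phi(y_k,w_k)-\gamma D_\Phi(z_k,y_k)\\
&\quad-(1-\gamma)D_\Phi(x_{k+1},x_k)-\gamma D_\Phi(x_{k+1},z_k)+\gamma E_k+\gamma G_k+\gamma\lambda_k\langle z_k-y_k,\,F(w_k)-F(y_k)\rangle,
\end{align*}
with $E_k:=\langle p-y_k,\nabla\Phi(w_k)-\lambda_kF(w_k)-\nabla\Phi(y_k)\rangle\le0$ and $G_k:=\lambda_k\langle p-y_k,F(y_k)\rangle\le0$. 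Consequently $\Upsilon_{k_i+1}-\Upsilon_{k_i}\ge-(\text{the six nonnegative Bregman terms})+\gamma(E_{k_i}+G_{k_i})$, and it suffices to show each of these quantities divided by $\lambda_{k_i}$ tends to $0$. Since $\Phi$ is $\rho$-strongly convex, $\nabla\Phi^*$ is $\tfrac1\rho$-Lipschitz and $\nabla\Phi(z_k)-\nabla\Phi(y_k)=\lambda_k\bigl(F(w_k)-F(y_k)\bigr)$, whence $D_\Phi(z_{k_i},y_{k_i})$ and $D_\Phi(x_{k_i+1},z_{k_i})$ are $O\!\bigl(\lambda_{k_i}^2\|F(w_{k_i})-F(y_{k_i})\|^2\bigr)=o(\lambda_{k_i})$; $D_\Phi(y_{k_i},w_{k_i})/\lambda_{k_i}\to0$ follows from part~(i) and the uniform continuity of $\nabla\Phi$ on bounded sets (\textbf{(D3)}); and $|E_{k_i}|,|G_{k_i}|$ are treated the same way, the increments against which $p-y_{k_i}$ is paired being $o(\lambda_{k_i})$. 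The genuinely delicate step — which I expect to be the main obstacle — is controlling $D_\Phi(w_{k_i},x_{k_i})/\lambda_{k_i}$ and $D_\Phi(w_{k_i},w_{k_i-1})/\lambda_{k_i}$: merely knowing $\|w_k-x_k\|\to0$ (Lemma~\ref{l4.2}(1)) is insufficient, and one must iterate the golden-ratio contraction $\nabla\Phi(w_k)-\nabla\Phi(x_k)=\tfrac1\psi\bigl(\nabla\Phi(w_{k-1})-\nabla\Phi(x_k)\bigr)$ to majorize these increments by the $z_j$–$y_j$ gaps at earlier steps (which are $O(\lambda_j)$) and then invoke $\lambda_{k_i}\to0$ together with Lemma~\ref{l4.1} on the summable variation of $\{\lambda_k\}$. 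Combining $\limsup_i\le0$ with $\liminf_i\ge0$ gives (ii).
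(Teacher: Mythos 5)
Your part (i) is correct and coincides with the paper's argument: for $k_i\in\mathcal{I}$ the rule \eqref{e18} forces $F(w_{k_i})\neq F(y_{k_i})$ and $\lambda_{k_i+1}=\mu\|w_{k_i}-y_{k_i}\|/\|F(w_{k_i})-F(y_{k_i})\|<\lambda_{k_i}$, so $\|w_{k_i}-y_{k_i}\|/\lambda_{k_i}\le\|F(w_{k_i})-F(y_{k_i})\|/\mu\to0$ by Lemma~\ref{l4.2}(1) and uniform continuity. Writing $\Upsilon_k:=D_\Phi(p,x_k)+\tfrac{\gamma}{\psi-1}D_\Phi(p,w_{k-1})$, your upper estimate in part (ii) --- drop the nonnegative descent terms in \eqref{4.16}, retain the perturbation in the form $\tfrac{\gamma\rho}{2}\bigl(\lambda_{k_i}^2-\lambda_{k_i+1}^2\bigr)\le\tfrac{\gamma\rho}{2}\lambda_{k_i}^2$, and divide by $\lambda_{k_i}$ --- is, up to an immaterial factor of $\gamma$, the paper's \emph{entire} proof of (ii): the paper establishes only $\limsup_i\lambda_{k_i}^{-1}(\Upsilon_{k_i+1}-\Upsilon_{k_i})\le0$ and then asserts the two-sided limit.

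The gap is therefore in the $\liminf\ge0$ half, which you correctly identify as necessary but do not close. Two concrete problems with your sketch. First, the term you call $G_k=\lambda_k\langle p-y_k,F(y_k)\rangle$ is \emph{not} $o(\lambda_k)$ in general: $G_{k_i}/\lambda_{k_i}=-\langle F(y_{k_i}),y_{k_i}-p\rangle$, and nothing proved up to this point forces $\langle F(y_{k_i}),y_{k_i}-p\rangle\to0$; under \textbf{C5} this quantity is bounded below by $c'\|y_{k_i}-p\|^{2+\epsilon}$, so showing it vanishes is essentially equivalent to the strong convergence one is ultimately after and cannot be taken for granted here. Second, as you yourself flag, $D_\Phi(w_{k_i},x_{k_i})/\lambda_{k_i}$ and $D_\Phi(w_{k_i},w_{k_i-1})/\lambda_{k_i}$ are not controlled: Lemma~\ref{l4.2}(1) gives no rate relative to $\lambda_{k_i}$, and the golden-ratio telescoping majorizes these increments by earlier gaps of size $O(\lambda_j)$ with $j<k_i$, where $\lambda_j$ may be far larger than $\lambda_{k_i}$. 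So your write-up proves (i) and one inequality of (ii), but not the stated limit. For what it is worth, the paper's own proof has the same deficiency --- it never addresses the lower bound --- and the direction actually invoked later (in Case II of the strong convergence theorem, where one needs $\limsup_i\bigl[-\lambda_{k_i}^{-1}(\Upsilon_{k_i+1}-\Upsilon_{k_i})\bigr]\le0$) is precisely the unproven $\liminf$ half; there the $\langle F(y_{k_i}),y_{k_i}-p\rangle$ term is reinstated and handled via \textbf{C5} rather than discarded. Your diagnosis of where the difficulty sits is thus more candid than the reference, but neither argument, as written, establishes (ii).
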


\begin{proof}
    Since $\lambda_k \to 0$ as $k \to \infty$, the set $\mathcal{I} := \{k \in \mathbb{N} : \lambda_{k+1} < \lambda_k\}$ is infinite. Hence, the sequence $\{k_i\}$ is well-defined and strictly increasing. By the definition of $\lambda_k$, we have
    \[
        \lambda_{k_{i+1}} = \frac{\mu \|w_{k_i} - y_{k_i}\|}{\|F(w_{k_i}) - F(y_{k_i})\|} < \lambda_{k_i}, \quad \text{for all } i \geq 1,
    \]
    which implies
    \[
        \frac{\|w_{k_i} - y_{k_i}\|}{\lambda_{k_i}} \leq \frac{1}{\mu} \|F(w_{k_i}) - F(y_{k_i})\|.
    \]
    Since it follows from Lemma~\ref{l4.2} (part (1)) that $\|w_k - y_k\| \to 0$, and $F$ is uniformly continuous, we obtain
    \[
        \lim_{i \to \infty} \|F(w_{k_i}) - F(y_{k_i})\| = 0,
    \]
    and hence,
    \[
        \lim_{i \to \infty} \frac{\|w_{k_i} - y_{k_i}\|}{\lambda_{k_i}} = 0.
    \]
    This completes the proof of part (i).

    \vskip 2mm
    Regarding part (ii), we recall inequality~\eqref{4.15}, which without any rescaling on $\lambda_k$, yields
   \begin{align*}
    D_{\Phi}(p,x_{k+1})+\frac{\gamma}{\psi-1}D_{\Phi}(p,w_{k})&\leq  D_{\Phi}(p,x_{k})+\frac{\gamma}{\psi-1}D_{\Phi}(p,w_{k-1})-\frac{\gamma}{\psi-1}D_{\Phi}(w_{k},w_{k-1})-\gamma D_{\Phi}(w_{k},x_{k})\\
    &\quad -\gamma(1-\frac{1}{\rho})D_{\Phi}(z_k, y_k)-\gamma\frac{\rho}{2}(1-\mu^2)\|y_{k}-w_{k}\|^2 +\frac{\rho}{2}\lambda_{k}^2-\frac{\rho}{2}\lambda_{k+1}^2\\
    &\leq D_{\Phi}(p,x_{k})+\frac{\gamma}{\psi-1}D_{\Phi}(p,w_{k-1})+\frac{\rho}{2}\lambda_{k}^2, 
\end{align*}
    where the last inequality follows by omitting nonpositive terms. Taking differences at the indices $\{k_i\}$ and dividing by $\lambda_{k_i}$, we obtain
    \[
        \frac{1}{\lambda_{k_i}} \left( D_{\Phi}(p, x_{k_{i+1}}) - D_{\Phi}(p, x_{k_i}) + \frac{\gamma}{\psi - 1} (D_{\Phi}(p, w_{k_i}) - D_{\Phi}(p, w_{k_{i-1}})) \right) \leq \frac{\rho}{2} \lambda_{k_i}.
    \]
    Since $\lambda_{k_i} \to 0$ as $i \to \infty$, the right-hand side converges to zero. Therefore,
    \[
        \lim_{i \to \infty} \frac{1}{\lambda_{k_i}} \left( D_{\Phi}(p, x_{k_{i+1}}) - D_{\Phi}(p, x_{k_i}) + \frac{\gamma}{\psi - 1} (D_{\Phi}(p, w_{k_i}) - D_{\Phi}(p, w_{k_{i-1}})) \right) = 0,
    \]
    which concludes the proof of part (ii).
\end{proof}
\begin{proposition}\label{p4.4}
    Suppose that assumptions \textbf{D1}–\textbf{D4} hold. Then there exists a weak cluster point \( p \) of the sequence \( \{y_k\} \) such that \( p \in \Omega_D \) or \( F(p) = 0 \).
\end{proposition}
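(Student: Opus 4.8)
The plan is to follow the blueprint of Proposition~\ref{p3.5}, transplanted to the Bregman/reflexive‑Banach setting and adapted to the nonmonotone adaptive stepsize.

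\textbf{Step 1: compactness and a dichotomy.} By Lemma~\ref{l4.2}, the sequences $\{x_k\}$, $\{w_k\}$, and hence $\{y_k\}$ are bounded, and $\lim_k\|w_k-y_k\|=0$. Since $E$ is reflexive, $\{y_k\}$ has a weakly convergent subsequence $y_{k_j}\rightharpoonup p$; as $K$ is closed and convex, hence weakly closed, and $y_k\in K$, we get $p\in K$, and $w_{k_j}\rightharpoonup p$ as well. Now inspect $\limsup_j\|F(y_{k_j})\|$. If it equals $0$, then $\|F(y_{k_j})\|\to0$ and assumption \textbf{(D4)} yields $\|F(p)\|\le\liminf_j\|F(y_{k_j})\|=0$, i.e.\ $F(p)=0$, and we are done. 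Otherwise, passing to a further subsequence we may assume $\|F(y_{k_j})\|\to M>0$, so $\|F(y_{k_j})\|\ge M/2$ for large $j$; it remains to prove $p\in\Omega_D$ in this case.

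\textbf{Step 2: a variational inequality along the iterates.} Put $u_k:=\nabla\Phi^*(\nabla\Phi(w_k)-\lambda_k F(w_k))$, so that $y_k=\operatorname{proj}_K^\Phi(u_k)$ and, by \eqref{proj_vi}, $\langle\nabla\Phi(w_k)-\lambda_k F(w_k)-\nabla\Phi(y_k),\,z-y_k\rangle\le0$ for all $z\in K$. Dividing by $\lambda_k>0$ and splitting $F(w_k)=F(y_k)+(F(w_k)-F(y_k))$ gives, for every $z\in K$, $\langle F(y_k),z-y_k\rangle\ge\frac1{\lambda_k}\langle\nabla\Phi(w_k)-\nabla\Phi(y_k),z-y_k\rangle-\langle F(w_k)-F(y_k),z-y_k\rangle$. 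Since $\|w_k-y_k\|\to0$, uniform continuity of $F$ (\textbf{(D2)}) forces $\|F(w_k)-F(y_k)\|\to0$, so the last term vanishes in the limit. For the first term on the right we use Lemma~\ref{l4.1}: if $\lambda_k\to\lambda>0$, then because $\Phi$ is uniformly Fréchet differentiable and bounded on bounded sets its gradient $\nabla\Phi$ is uniformly continuous on bounded sets, hence $\|\nabla\Phi(w_k)-\nabla\Phi(y_k)\|\to0$ and the term vanishes; if $\lambda_k\to0$, we instead select $p$ as a weak cluster point of $\{y_k\}$ along the (infinite) index set $\mathcal I=\{k:\lambda_{k+1}<\lambda_k\}$ of Lemma~\ref{l4.3}, and combine the sharper decay $\|w_{k_i}-y_{k_i}\|/\lambda_{k_i}\to0$ from Lemma~\ref{l4.3}(i) with the modulus of continuity of $\nabla\Phi$ on the bounded set containing the iterates to make $\frac1{\lambda_{k_i}}\langle\nabla\Phi(w_{k_i})-\nabla\Phi(y_{k_i}),z-y_{k_i}\rangle$ vanish as well. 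Either way, $\liminf_{j\to\infty}\langle F(y_{k_j}),z-y_{k_j}\rangle\ge0$ for every $z\in K$.

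\textbf{Step 3: quasimonotone limit.} This is exactly Case~2 of the proof of Proposition~\ref{p3.5}. Fix $z\in K$. If $\limsup_j\langle F(y_{k_j}),z-y_{k_j}\rangle>0$, then $\langle F(y_{k_j}),z-y_{k_j}\rangle>0$ along a further subsequence, quasimonotonicity gives $\langle F(z),z-y_{k_j}\rangle\ge0$, and $y_{k_j}\rightharpoonup p$ yields $\langle F(z),z-p\rangle\ge0$. If instead $\limsup_j\langle F(y_{k_j}),z-y_{k_j}\rangle=0$, then by Step~2 this limit is $0$; set $\varepsilon_j:=|\langle F(y_{k_j}),z-y_{k_j}\rangle|+\frac1{j+1}>0$ and $z_j:=F(y_{k_j})/\|F(y_{k_j})\|^2$ (well defined, with $\|z_j\|\le2/M$ eventually), so $\langle F(y_{k_j}),z+\varepsilon_jz_j-y_{k_j}\rangle=\langle F(y_{k_j}),z-y_{k_j}\rangle+\varepsilon_j>0$; quasimonotonicity gives $\langle F(z+\varepsilon_jz_j),z+\varepsilon_jz_j-y_{k_j}\rangle\ge0$, and since $\|\varepsilon_jz_j\|\to0$, uniform continuity of $F$ makes $\|F(z+\varepsilon_jz_j)-F(z)\|\to0$, whence $\langle F(z),z-y_{k_j}\rangle\ge-\varepsilon_j\|F(z)\|\|z_j\|-o(1)\to0$; letting $j\to\infty$ gives $\langle F(z),z-p\rangle\ge0$. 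As $z\in K$ is arbitrary, $p\in\Omega_D$.

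\textbf{Expected main obstacle.} Steps~1 and~3 are routine adaptations (Bregman distances in place of norms, $\nabla\Phi,\nabla\Phi^*$ to move between $E$ and $E^*$). The one genuinely delicate point is the $\lambda_k\to0$ branch of Step~2, where dividing by $\lambda_k$ is dangerous: one must exploit that the adaptive rule \eqref{e18} makes $\|w_{k_i}-y_{k_i}\|$ vanish strictly faster than $\lambda_{k_i}$ along $\mathcal I$ (Lemma~\ref{l4.3}(i)) so that the gradient term $\frac1{\lambda_{k_i}}\langle\nabla\Phi(w_{k_i})-\nabla\Phi(y_{k_i}),z-y_{k_i}\rangle$ still disappears; obtaining the quantitative comparison between $\|\nabla\Phi(w_{k_i})-\nabla\Phi(y_{k_i})\|$ and $\|w_{k_i}-y_{k_i}\|$ from the regularity of $\nabla\Phi$ on bounded sets is where the real work lies.
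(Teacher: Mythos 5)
Your proposal is correct and follows essentially the same route as the paper: the paper's own proof of Proposition~\ref{p4.4} simply defers to the argument of Proposition~\ref{p3.5}, splits into the cases $\lambda_k\to\lambda>0$ and $\lambda_k\to0$, and in the latter case restricts to the index set $\mathcal I$ and invokes Lemma~\ref{l4.3}(i) exactly as you do, with your Step~3 reproducing Case~2 of Proposition~\ref{p3.5} (with uniform continuity correctly substituted for Lipschitz continuity). The one obstacle you flag --- passing from $\|w_{k_i}-y_{k_i}\|/\lambda_{k_i}\to0$ to the vanishing of $\frac1{\lambda_{k_i}}\langle\nabla\Phi(w_{k_i})-\nabla\Phi(y_{k_i}),z-y_{k_i}\rangle$ --- is genuine, and the paper itself sidesteps it by writing the projection inequality with $w_{k_i}-y_{k_i}$ in place of $\nabla\Phi(w_{k_i})-\nabla\Phi(y_{k_i})$; closing it cleanly requires $\nabla\Phi$ to be Lipschitz on bounded sets (or a comparable modulus estimate), a point worth making explicit in either version.
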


\begin{proof}
    The proof proceeds in a manner analogous to that of Proposition~\ref{p3.5}, with the primary distinction arising when the stepsizes \( \lambda_k \to 0 \) as \( k \to \infty \). In such a case, we can only assert the existence of one weak cluster point \( p \) of \( \{y_k\} \) satisfying \( p \in \Omega_D \) or \( F(p) = 0 \), rather than the stronger conclusion in Proposition~\ref{p3.5} which guarantees this property for all weak cluster points.
    
    \vskip 2mm
    \noindent
    \textbf{Case 1:} \( \lambda_k \to \lambda > 0 \) as \( k \to \infty \). In this situation, the convergence \( \lambda_{k_i} \to \lambda > 0 \) also holds for any subsequence \( \{k_i\} \). Following the same argument as in Proposition~\ref{p3.5}, we conclude that every weak cluster point \( p \) of \( \{y_k\} \) satisfies \( p \in \Omega_D \) or \( F(p) = 0 \).

    \vskip 2mm
    \noindent
    \textbf{Case 2:} \( \lambda_k \to 0 \) as \( k \to \infty \). In this case, define the index set
    \[
        \mathcal{I} := \{k \in \mathbb{N} : \lambda_{k+1} < \lambda_k\}.
    \]
    Since \( \lambda_k \to 0 \), the set \( \mathcal{I} \) is infinite. Denote the strictly increasing sequence of indices in \( \mathcal{I} \) by \( \{k_i\} \). The boundedness of \( \{x_{k_i}\} \) implies the existence of a weakly convergent subsequence. Without loss of generality, assume \( x_{k_i} \rightharpoonup p \) as \( i \to \infty \). Moreover, by Lemma~\ref{l4.2} (part(1)), we also have \( y_{k_i} \rightharpoonup p \) and \( p \in K \).
    
    \vskip 1mm
    The essential difference in this case lies in the counterpart of Case 2 in Proposition~\ref{p3.5}. Recall that  the following inequality:
    \[
        \frac{1}{\lambda_{k_i}} \langle w_{k_i} - y_{k_i}, z - y_{k_i} \rangle - \langle F(w_{k_i}) - F(y_{k_i}), z - y_{k_i} \rangle \leq \langle F(y_{k_i}), z - y_{k_i} \rangle, \quad \forall z \in K.
    \]
    Since \( \|w_{k_i} - y_{k_i}\| \to 0 \) and \( F \) is uniformly continuous on bounded sets, it follows that
    \[
        \|F(w_{k_i}) - F(y_{k_i})\| \to 0.
    \]
    Combined with \( \frac{\|w_{k_i} - y_{k_i}\|}{\lambda_{k_i}} \to 0 \), the left-hand side of the inequality tends to 0 as \( i \to \infty \). Hence, taking limits, we obtain:
    \[
        0 \leq \liminf_{i \to \infty} \langle F(y_{k_i}), z - y_{k_i} \rangle \leq \limsup_{i \to \infty} \langle F(y_{k_i}), z - y_{k_i} \rangle < +\infty.
    \]
    The remaining steps follow similarly to those in the proof of Proposition~\ref{p3.5}, leading to the conclusion that the weak limit point \( p \) satisfies \( p \in \Omega_D \) or \( F(p) = 0 \). For brevity, we omit the details here.
\end{proof}
\begin{theorem}
Assume that \(F(p) \neq 0\) for all \(p \in \Omega \setminus \Omega_D\). Under assumptions \textbf{D1}–\textbf{D4} and \textbf{C5}, the sequence \(\{x_k\}\) generated by Algorithm 1 converges strongly to a point \(p \in \Omega_D\).
\end{theorem}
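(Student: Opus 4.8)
The plan is to locate a weak cluster point $\bar p$ of the generated sequence inside $\Omega_D$, to use \textbf{C5} to turn the asymptotic vanishing of the gap $\langle F(y_k),y_k-\bar p\rangle$ into $\|y_k-\bar p\|\to0$, and then to upgrade this to strong convergence of $\{x_k\}$ through the Bregman distance. First I would collect what is already available: by Lemma~\ref{l4.1}, $\lambda_k\to\lambda\ge0$; by Lemma~\ref{l4.2}, the sequences $\{x_k\},\{w_k\},\{y_k\},\{z_k\}$ are bounded, $\|w_k-w_{k-1}\|,\|w_k-x_k\|,\|w_k-y_k\|,\|z_k-y_k\|\to0$ (hence also $\|x_k-y_k\|\to0$), and $\lim_k D_\Phi(p,x_k)$ exists for every $p\in\Omega_D$; and by Proposition~\ref{p4.4} there is a weak cluster point $\bar p$ of $\{y_k\}$ with $\bar p\in\Omega_D$ or $F(\bar p)=0$. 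Since $\bar p$ is a weak limit of points of the weakly closed convex set $K$, $\bar p\in K$; and $F(\bar p)=0$ gives $\langle F(\bar p),y-\bar p\rangle=0$ for all $y\in K$, so $\bar p\in\Omega$, and then the standing hypothesis ``$F(p)\neq0$ on $\Omega\setminus\Omega_D$'' forces $\bar p\in\Omega_D$. Thus $\bar p\in\Omega_D$ in every case, and consequently, as $y_k\in K$,
\begin{equation*}
\langle F(y_k),y_k-\bar p\rangle\ge0\qquad\text{for all }k.
\end{equation*}

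Next I would prove $\langle F(y_k),y_k-\bar p\rangle\to0$. If $\lambda>0$, the Bregman-projection characterization \eqref{proj_vi} of $y_k$ at the test point $\bar p\in K$ gives $\lambda_k\langle F(w_k),y_k-\bar p\rangle\le\langle\nabla\Phi(w_k)-\nabla\Phi(y_k),y_k-\bar p\rangle$, whence
\begin{equation*}
\lambda_k\langle F(y_k),y_k-\bar p\rangle\le\|\nabla\Phi(w_k)-\nabla\Phi(y_k)\|\,\|y_k-\bar p\|+\lambda_k\|F(y_k)-F(w_k)\|\,\|y_k-\bar p\|;
\end{equation*}
dividing by $\lambda_k\ge\lambda/2$ for $k$ large and using uniform continuity of $F$ on $E$ and of $\nabla\Phi$ on bounded subsets (from \textbf{D2}, \textbf{D3}), the right-hand side tends to $0$ because $\|w_k-y_k\|\to0$ and $\{y_k\}$ is bounded; with the sign $\langle F(y_k),y_k-\bar p\rangle\ge0$ this gives $\langle F(y_k),y_k-\bar p\rangle\to0$ for the whole sequence. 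If $\lambda=0$, dividing by $\lambda_k$ is no longer admissible, and I would localise on $\mathcal I:=\{k:\lambda_{k+1}<\lambda_k\}$; by Proposition~\ref{p4.4}, $\bar p$ may be taken as a weak limit of $\{x_{k_i}\}$ and of $\{y_{k_i}\}$ along a strictly increasing sequence $\{k_i\}\subseteq\mathcal I$. Revisiting the derivation of \eqref{4.15}--\eqref{4.16} in Case~B of Lemma~\ref{l4.2}, but \emph{retaining} the nonpositive term $-\lambda_k\langle F(y_k),y_k-\bar p\rangle$ that arises from \eqref{4.5} with $p=\bar p$ instead of discarding it, yields at each $k=k_i$
\begin{equation*}
\gamma\lambda_{k_i}\langle F(y_{k_i}),y_{k_i}-\bar p\rangle\le\bigl(\Upsilon_{k_i}-\Upsilon_{k_i+1}\bigr)+\tfrac{\gamma\rho}{2}\bigl(\lambda_{k_i}^2-\lambda_{k_i+1}^2\bigr),\qquad\Upsilon_k:=D_\Phi(\bar p,x_k)+\tfrac{\gamma}{\psi-1}D_\Phi(\bar p,w_{k-1}).
\end{equation*}
Dividing by $\lambda_{k_i}$, the last term is at most $\tfrac{\gamma\rho}{2}\lambda_{k_i}\to0$, while $(\Upsilon_{k_i}-\Upsilon_{k_i+1})/\lambda_{k_i}\to0$ by Lemma~\ref{l4.3}(ii); with the sign this gives $\langle F(y_{k_i}),y_{k_i}-\bar p\rangle\to0$. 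In either case, applying \textbf{C5} with $u=\bar p$ forces $\|y_k-\bar p\|\to0$ (respectively $\|y_{k_i}-\bar p\|\to0$ along $\{k_i\}$).

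Finally I would deduce strong convergence of $\{x_k\}$. From $\|x_k-y_k\|\to0$, the previous step gives $\|x_k-\bar p\|\to0$ when $\lambda>0$, which already completes that case. When $\lambda=0$ it gives $\|x_{k_i}-\bar p\|\to0$, and since $\Phi$ is finite-valued and continuous with $\nabla\Phi$ bounded on bounded subsets, $D_\Phi(\bar p,x_{k_i})=\Phi(\bar p)-\Phi(x_{k_i})-\langle\nabla\Phi(x_{k_i}),\bar p-x_{k_i}\rangle\to0$; since $\lim_k D_\Phi(\bar p,x_k)$ exists by Lemma~\ref{l4.2}, it must equal $0$, and strong convexity of $\Phi$ then yields $\tfrac{\rho}{2}\|x_k-\bar p\|^2\le D_\Phi(\bar p,x_k)\to0$, i.e. $x_k\to\bar p\in\Omega_D$ strongly. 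The step I expect to be the main obstacle is the degenerate regime $\lambda_k\to0$: there $\lambda_k\langle F(y_k),y_k-\bar p\rangle\to0$ does not by itself control the gap, so the argument must be run along $\mathcal I$ and must exploit the quantitative rate $(\Upsilon_{k_i+1}-\Upsilon_{k_i})/\lambda_{k_i}\to0$ supplied by Lemma~\ref{l4.3}, after which the convergence of $D_\Phi(\bar p,x_k)$ is what promotes the subsequential conclusion to the full sequence; a secondary technical point is that $\|F(w_k)-F(y_k)\|\to0$ and $\|\nabla\Phi(w_k)-\nabla\Phi(y_k)\|\to0$ have to be drawn from uniform continuity rather than from a Lipschitz estimate.
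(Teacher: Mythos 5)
Your proposal is correct and follows essentially the same route as the paper: the same case split on $\lambda_k\to\lambda>0$ versus $\lambda_k\to0$, the same identification of a weak cluster point in $\Omega_D$ via Proposition~\ref{p4.4} and the hypothesis $F(p)\neq0$ on $\Omega\setminus\Omega_D$, the same use of \textbf{C5} to convert the vanishing gap into $\|y_k-\bar p\|\to0$ (with Lemma~\ref{l4.3}(ii) handling the degenerate regime along $\mathcal I$), and the same promotion to full-sequence strong convergence through the existence of $\lim_k D_\Phi(\bar p,x_k)$. The only minor variation is in Case~I, where you derive $\langle F(y_k),y_k-\bar p\rangle\to0$ directly from the Bregman-projection characterization and uniform continuity before invoking \textbf{C5}, whereas the paper inserts the \textbf{C5} lower bound into the Lyapunov recursion \eqref{4.13} and telescopes; both are valid and lead to the same conclusion.
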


\begin{proof}
The proof is divided into two cases:

\vskip 2mm
\textbf{Case I:} \(\lambda_k \to \lambda > 0\) as \(k \to \infty\).

By Lemma \ref{l4.2}, there exists a subsequence \(\{x_{k_i}\}\) of \(\{x_k\}\) such that
$x_{k_i} \rightharpoonup p \in \Omega$.
Since \(F(p) \neq 0\) for all \(p \in \Omega \setminus \Omega_D\), it follows that \(p \in \Omega_D\). Given \(\lim_{i \to \infty} \|x_{k_i} - y_{k_i}\| = 0\), we also have
$y_{k_i} \rightharpoonup p \in \Omega_D$.
Because \(\{y_{k_i}\} \subset K\), assumption \textbf{C5} guarantees the existence of constants \(K' > K\) and \(c' > 0\) such that for all \(k > K'\),
\begin{equation*}
\langle F(y_{k_i}), y_{k_i} - p \rangle \geq c' \|y_{k_i} - p\|^{2 + \epsilon}\tag{4.17}\label{eq:4.17}.
\end{equation*}

Combining inequalities \eqref{4.3}, \eqref{4.7}, and \eqref{eq:4.17}, we deduce
\[
\begin{aligned}
D_{\Phi}(p, z_{k_i}) &\leq D_{\Phi}(p, w_{k_i}) - \left(1 - \frac{\mu \lambda_{k_i}}{\rho \lambda_{k_{i+1}}}\right) D_{\Phi}(y_{k_i}, w_{k_i}) - \left(1 - \frac{\mu \lambda_{k_i}}{\rho \lambda_{k_{i+1}}}\right) D_{\Phi}(z_{k_i}, y_{k_i}) - \lambda_{k_i} \langle F(y_{k_i}), y_{k_i} - p \rangle \\
&\leq D_{\Phi}(p, w_{k_i}) - \left(1 - \frac{\mu \lambda_{k_i}}{\rho \lambda_{k_{i+1}}}\right) D_{\Phi}(y_{k_i}, w_{k_i}) - \left(1 - \frac{\mu \lambda_{k_i}}{\rho \lambda_{k_{i+1}}}\right) D_{\Phi}(z_{k_i}, y_{k_i}) - c' \lambda_{k_i} \|y_{k_i} - p\|^{2 + \epsilon}.
\end{aligned}
\]
From inequality \eqref{4.13}, we obtain
\[
\begin{aligned}
D_{\Phi}(p, x_{k+1}) + \frac{\gamma}{\psi - 1} D_{\Phi}(p, w_k) &\leq D_{\Phi}(p, x_k) + \frac{\gamma}{\psi - 1} D_{\Phi}(p, w_{k-1}) - \frac{\gamma}{\psi - 1} D_{\Phi}(w_k, w_{k-1}) - \gamma D_{\Phi}(w_k, x_k) \\
&\quad - \gamma \left(1 - \frac{\mu \lambda_k}{\rho \lambda_{k+1}}\right) \bigl(D_{\Phi}(y_k, w_k) + D_{\Phi}(z_k, y_k)\bigr) - c' \lambda_k \|y_k - p\|^{2 + \epsilon},
\end{aligned}
\]
which can be rearranged as
\[
\begin{aligned}
c' \lambda_{k_i} \|y_{k_i} - p\|^{2 + \epsilon} &\leq \bigl[D_{\Phi}(p, x_k) - D_{\Phi}(p, x_{k+1})\bigr] + \frac{\gamma}{\psi - 1} \bigl[D_{\Phi}(p, w_{k-1}) - D_{\Phi}(p, w_k)\bigr] - \frac{\gamma}{\psi - 1} D_{\Phi}(w_k, w_{k-1}) \\
&\quad - \gamma \left(1 - \frac{\mu \lambda_k}{\rho \lambda_{k+1}}\right) \bigl(D_{\Phi}(y_k, w_k) + D_{\Phi}(z_k, y_k)\bigr) - \gamma D_{\Phi}(w_k, x_k).
\end{aligned}
\]
Noting that \(D_{\Phi}(p, w_k)\) and \(D_{\Phi}(p, x_k)\) have limits by Lemma \ref{l4.2}, and
\[
\lim_{k \to \infty} D_{\Phi}(w_k, w_{k-1}) = \lim_{k \to \infty} D_{\Phi}(w_k, x_k) = \lim_{k \to \infty} D_{\Phi}(y_k, w_k) = \lim_{k \to \infty} D_{\Phi}(z_k, y_k) = 0,
\]
it follows that
\[
\lim_{i \to \infty} \|y_{k_i} - p\| = 0.
\]
Therefore,
\[
\|x_{k_i} - p\| \leq \|x_{k_i} - y_{k_i}\| + \|y_{k_i} - p\| \to 0 \quad \text{as } i \to \infty.
\]
Since \(\lim_{k \to \infty} D_{\Phi}(p, x_k)\) exists, this implies the entire sequence converges strongly:
\[
\|x_k - p\| \to 0 \quad \text{as } k \to \infty.
\]

\vskip 2mm
\textbf{Case II:} \(\lambda_k \to 0\) as \(k \to \infty\).

In this case, the set
\[
\mathcal{I} := \{ k \in \mathbb{N} : \lambda_{k+1} < \lambda_k \}
\]
is infinite. Let \(\{k_i\}\) denote the strictly increasing sequence enumerating \(\mathcal{I}\). Then \(\lambda_{k_i} \to 0\) and \(\lambda_{k_{i+1}} \leq \lambda_{k_i}\) as \(i \to \infty\).
By Proposition \ref{p4.4} and the assumption \(F(p) \neq 0\) for all \(p \in \Omega \setminus \Omega_D\), we have $x_{k_i} \rightharpoonup p \in \Omega_D$.
Since \(\|x_{k_i} - y_{k_i}\| \to 0\), it follows that
$y_{k_i} \rightharpoonup p \in \Omega_D$.
Hence, assumption \textbf{C5} ensures that inequality \eqref{eq:4.17} holds.
\vskip 2mm 
Recalling inequalities \eqref{4.3} and \eqref{4.15}, and noting no scaling is applied to \(\lambda_k\), we have
\[
D_{\Phi}(p, z_{k_i}) \leq D_{\Phi}(p, w_{k_i}) - \left(1 - \frac{1}{\rho}\right) D_{\Phi}(z_{k_i}, y_{k_i}) - \frac{\rho}{2} (1 - \mu^2) \|y_{k_i} - w_{k_i}\|^2 + \frac{\rho}{2} \lambda_{k_i}^2 - \frac{\rho}{2} \lambda_{k_{i+1}}^2 - \lambda_{k_i} \langle F(y_{k_i}), y_{k_i} - p \rangle.
\]

Similarly, by \eqref{4.16}, we get 
\begin{align*}
    D_{\Phi}(p,x_{k_{i+1}})+\frac{\gamma}{\psi-1}D_{\Phi}(p,w_{k_i})&\leq  D_{\Phi}(p,x_{k_i})+\frac{\gamma}{\psi-1}D_{\Phi}(p,w_{k_{i-1}})-\frac{\gamma}{\psi-1}D_{\Phi}(w_{k_i},w_{k_{i-1}})-\gamma D_{\Phi}(w_{k_i},x_{k_i})\\
    &\quad -\gamma(1-\frac{1}{\rho})D_{\Phi}(z_{k_i}, y_{k_i})-\gamma\frac{\rho}{2}(1-\mu^2)\|y_{k_i}-w_{k_i}\|^2 \\
    &\quad+\frac{\rho}{2}\lambda_{k_i}^2-\frac{\rho}{2}\lambda_{k_{i+1}}^2-\lambda_{k_i}\langle F(y_{k_i}), y_{k_i}-p\rangle\\
    &\leq D_{\Phi}(p,x_{k_i})+\frac{\gamma}{\psi-1}D_{\Phi}(p,w_{k_{i-1}})+\frac{\rho}{2}\lambda_{k_i}^2-c'\lambda_{k_i}\|y_{k_i}-p\|^{2+\epsilon},
\end{align*}
which can be written as 
\begin{align*}
    c'\|y_{k_i}-p\|^{2+\epsilon}&\leq\frac{1}{\lambda_{k_i}}([D_{\Phi}(p,x_{k_i})-D_{\Phi}(p,x_{k_{i+1}})]+\frac{\gamma}{\psi-1}\left(D_{\Phi}(p,w_{k_{i-1}})-D_{\Phi}(p,w_{k_i})\right))+\frac{\rho}{2}\lambda_{k_i}.
\end{align*}
Since $\lambda_{k_i}\to 0$ as $i\to\infty$ and Lemma \ref{l4.3} (part (ii))
 \[
        \lim_{i \to \infty} \frac{1}{\lambda_{k_i}} \left( D_{\Phi}(p, x_{k_{i+1}}) - D_{\Phi}(p, x_{k_i}) + \frac{\gamma}{\psi - 1} (D_{\Phi}(p, w_{k_i}) - D_{\Phi}(p, w_{k_{i-1}})) \right) = 0,
    \]
we derive that $\lim_{i\to\infty}\|y_{k_i}-p\|=0$. Hence, 
\begin{equation*}
    \|x_{k_i}-p\|\leq \|x_{k_i}-y_{k_i}\|+\|y_{k_i}-p\|\to 0, \quad i\to\infty. 
\end{equation*}
Consequently, by the fact that $\lim_{k\to\infty}D_{\Phi}(p,x_{k})$ exists, we have 
\begin{equation*}
    \|x_{k}-p\|\to 0, \quad k\to\infty. 
\end{equation*}

This completes the proof.
\end{proof}

\begin{theorem}[Ergodic Convergence of Discrete Iterates]
Assume that \(F(p) \neq 0\) for all \(p \in \Omega \setminus \Omega_D\). Assume that assumptions \textbf{D1}–\textbf{D4} and \textbf{C5} hold, and let \( \{s_k\}_{k \geq 1} \) be a sequence of strictly positive weights satisfying
\[
S_N := \sum_{k=1}^N s_k \to +\infty \quad \text{as } N \to \infty.
\]
Then, the (weighted) ergodic averages
\[
\bar{x}_N := \frac{1}{S_N} \sum_{k=1}^N s_k x_k
\]
converge strongly to \( x^* \) as \( N \to \infty \). In particular, by choosing \( s_k = 1 \), the unweighted Cesàro average
\[
\bar{x}_N = \frac{1}{N} \sum_{k=1}^N x_k
\]
also converges strongly to \( x^* \).
\end{theorem}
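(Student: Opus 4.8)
The plan is to reduce the statement to the classical Toeplitz (Silverman--Toeplitz) averaging principle: a sequence that converges strongly to a limit has its weighted Cesàro averages converging to the same limit, provided the weights are strictly positive and their partial sums diverge. The crucial input is the preceding theorem, which already guarantees that the full sequence $\{x_k\}$ generated by Algorithm~1 converges strongly to $x^* \in \Omega_D$; in particular $\{x_k\}$ is bounded, so there is a constant $M>0$ with $\|x_k - x^*\| \le M$ for all $k \ge 1$. No further structural properties of $F$ or $\Phi$ are needed beyond this.

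First I would write
\[
\bar{x}_N - x^* = \frac{1}{S_N}\sum_{k=1}^N s_k (x_k - x^*),
\]
so that, by the triangle inequality,
\[
\|\bar{x}_N - x^*\| \le \frac{1}{S_N}\sum_{k=1}^N s_k \|x_k - x^*\|.
\]
Then, fixing an arbitrary $\varepsilon > 0$, I would invoke the strong convergence $\|x_k - x^*\| \to 0$ to choose an index $K_0$ with $\|x_k - x^*\| < \varepsilon/2$ for all $k > K_0$, and split the right-hand side at $K_0$ into a head term $\frac{1}{S_N}\sum_{k=1}^{K_0} s_k \|x_k - x^*\|$ and a tail term $\frac{1}{S_N}\sum_{k=K_0+1}^N s_k \|x_k - x^*\|$.

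The tail term is bounded by $\frac{\varepsilon}{2}\cdot\frac{1}{S_N}\sum_{k=K_0+1}^N s_k \le \frac{\varepsilon}{2}$, uniformly in $N$, since the weights are positive. For the head term, $K_0$ is now fixed, so the numerator satisfies $\sum_{k=1}^{K_0} s_k \|x_k - x^*\| \le M \sum_{k=1}^{K_0} s_k =: c_{K_0}$, a fixed constant, while $S_N \to +\infty$ by hypothesis; hence there exists $N_0 \ge K_0$ with $c_{K_0}/S_N < \varepsilon/2$ for all $N \ge N_0$. Combining the two estimates yields $\|\bar{x}_N - x^*\| < \varepsilon$ for all $N \ge N_0$, which proves $\bar{x}_N \to x^*$ strongly; the unweighted Cesàro case is recovered by taking $s_k \equiv 1$, for which $S_N = N \to \infty$. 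This is precisely the discrete counterpart of the proof of the continuous ergodic theorem given earlier in the paper, and it is entirely elementary — there is no genuine obstacle, the only nontrivial ingredient being the already-established strong convergence and boundedness of $\{x_k\}$, so the remaining work is purely bookkeeping in the head/tail split.
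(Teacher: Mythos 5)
Your proposal is correct and follows essentially the same argument as the paper: both invoke the strong convergence $x_k \to x^*$ from the preceding theorem, split the weighted average at a cutoff index, bound the tail by $\varepsilon/2$ using positivity of the weights, and kill the head term using $S_N \to +\infty$ together with boundedness of $\{x_k\}$. No gap; the only difference is notational.
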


\begin{proof}
Fix any \( \varepsilon > 0 \). Since \( x_k \to x^* \) strongly, there exists \( K \in \mathbb{N} \) such that
\[
\|x_k - x^*\| < \frac{\varepsilon}{2}, \quad \forall k \geq K.
\]
Note that the boundedness of \( \{x_k\} \) implies there exists \( M > 0 \) such that
\[
\|x_k - x^*\| \leq M, \quad \forall k \in \mathbb{N}.
\]
We now decompose
\[
\bar{x}_N - x^* = \frac{1}{S_N} \sum_{k=1}^N s_k (x_k - x^*) = \frac{1}{S_N} \sum_{k=1}^{K-1} s_k (x_k - x^*) + \frac{1}{S_N} \sum_{k=K}^N s_k (x_k - x^*).
\]
Taking norms and using the triangle inequality yields:
\[
\|\bar{x}_N - x^*\| \leq \frac{1}{S_N} \sum_{k=1}^{K-1} s_k \|x_k - x^*\| + \frac{1}{S_N} \sum_{k=K}^N s_k \|x_k - x^*\|.
\]
For the first term, since \( s_k > 0 \) and \( \|x_k - x^*\| \leq M \),
\[
\frac{1}{S_N} \sum_{k=1}^{K-1} s_k \|x_k - x^*\| \leq \frac{M \sum_{k=1}^{K-1} s_k }{S_N} \to 0 \quad \text{as } N \to \infty,
\]
because \( S_N \to +\infty \).

For the second term, using \( \|x_k - x^*\| < \frac{\varepsilon}{2} \) for all \( k \geq K \),
\[
\frac{1}{S_N} \sum_{k=K}^N s_k \|x_k - x^*\| < \frac{\varepsilon}{2} \cdot \frac{ \sum_{k=K}^N s_k }{ S_N } \leq \frac{\varepsilon}{2}.
\]
Combining the above, for sufficiently large \( N \),
\[
\|\bar{x}_N - x^*\| < 0 + \frac{\varepsilon}{2} = \frac{\varepsilon}{2} < \varepsilon.
\]
Since \( \varepsilon > 0 \) was arbitrary, we conclude
\[
\bar{x}_N \to x^* \quad \text{strongly as } N \to \infty, 
\]
which completes the proof.
\end{proof}

\section{Numerical experiments}\label{sec:experiments}
\vskip 2mm 
\textbf{Example 5.1} 
Consider the variational inequality problem over the box constraint
\[
C = \{ x \in \mathbb{R}^3 : -5 \leq x_i \leq 5, \; i=1,2,3 \},
\]
with the operator
\[
F(x) = \bigl( e^{ - \| x \|^2 } + q \bigr) M x,
\]
where \( q = 0.2 \) and
\[
M =
\begin{bmatrix}
1 & 0 & -1 \\ 
0 & 1.5 & 0 \\ 
-1 & 0 & 2
\end{bmatrix}.
\]
Since \( e^{ - \| x \|^2 } + q > 0 \) for all \( x \), the solution satisfies
$M x^* = 0$. It is easy verify that the exact solution is
$x^* = [0; 0; 0]$.
As mentioned in \cite{a22},  the operator $F$ is continuous and strongly pseudomonotone with constant  $\eta = q\cdot\lambda_{\min}\approx0.0764$, where $\lambda_{\min}$ is the smallest eigenvalue of $M$, and Lipschtz continuous with constant $L\approx5.0679$. 
The initial point is chosen as \textbf{Case 1}:$x_0 = (-5,\ 4,\ 7)^T$; \textbf{Case 2}:$x_0 = (-4,\ 3,\ 5)^T$. We compare the performance of four different strategies for the stepsize $\lambda\in\{0.05, 0,1,0.2\}$.
\vskip 2mm 
Figure~\ref{fig:comparison} plots the component trajectories $x_1(t), x_2(t), x_3(t)$ under all three strategies. It is evident that the selection of $\lambda$ significantly affects the rate of convergence. Specifically, as the values of $\lambda$ decrease, the convergence of the trajectories deteriorates.
\vskip 2mm 
Figure~\ref{fig:energy_decay} illustrates the decay of the Lyapunov energy function
\[
V(t) = \frac{1}{2} \| x(t) - x^* \|^2
\]
under different fixed step sizes \(\lambda \in \{0.05, 0.1, 0.2\}\) within the proposed continuous-time forward-backward-forward (FBF) dynamical system. It is observed that the energy function decreases monotonically towards zero, confirming the stability and convergence of the system. Furthermore, larger step sizes (\(\lambda = 0.2\)) lead to a significantly faster decay of the energy function, while smaller step sizes (\(\lambda = 0.05\)) exhibit slower convergence.

\begin{figure}[ht]
\centering
\begin{minipage}{0.45\linewidth}
    \centering
    \includegraphics[width=\linewidth]{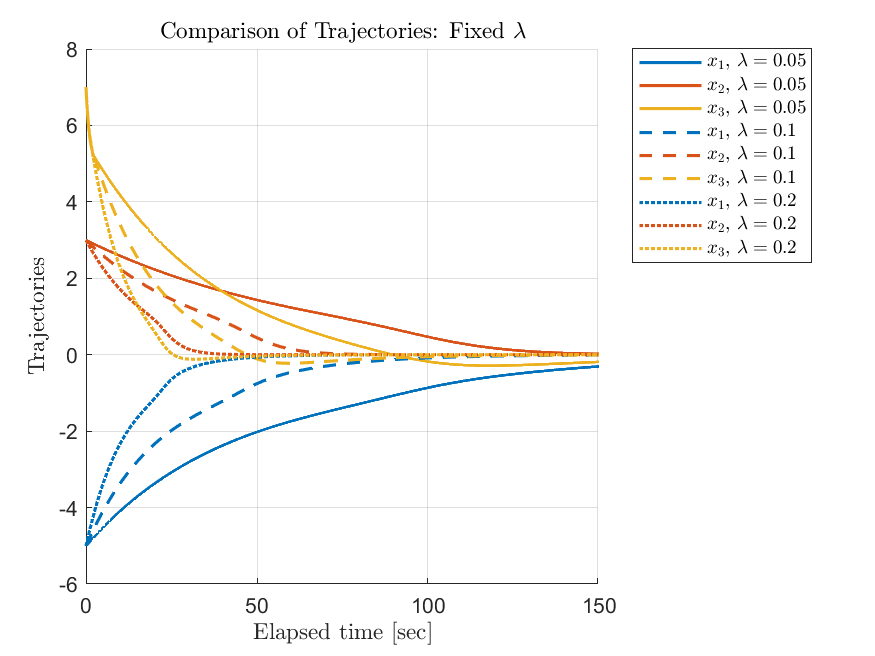}
\end{minipage} \hfill
\begin{minipage}{0.45\linewidth}
    \centering
    \includegraphics[width=\linewidth]{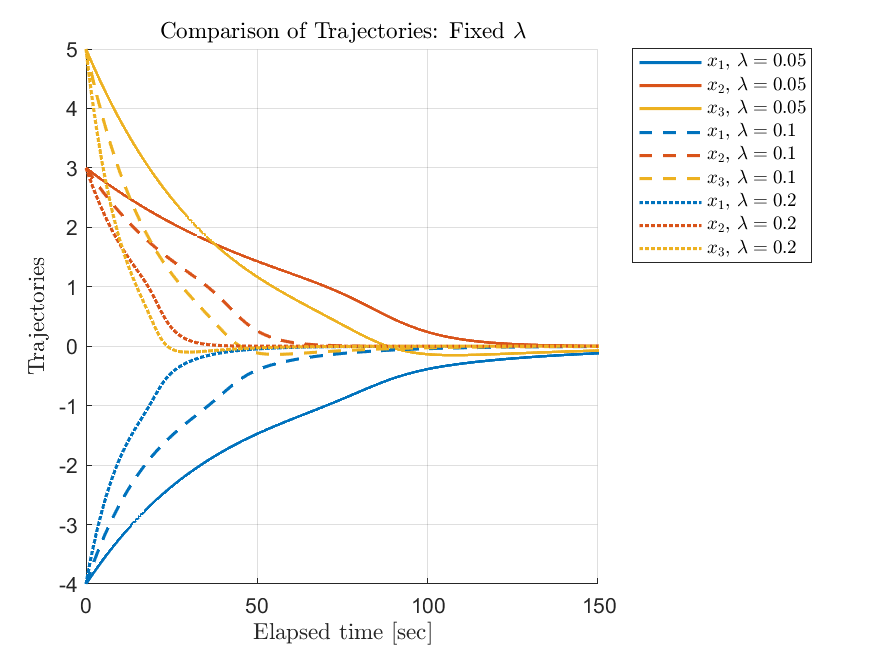}
\end{minipage}
\caption{The convergence trajectories of Example 5.1; Left:Case 1; Right: Case 2.}
\label{fig:comparison}
\end{figure}
\vskip 2mm
\begin{figure}[ht]
\centering
\begin{minipage}{0.45\linewidth}
    \centering
    \includegraphics[width=\linewidth]{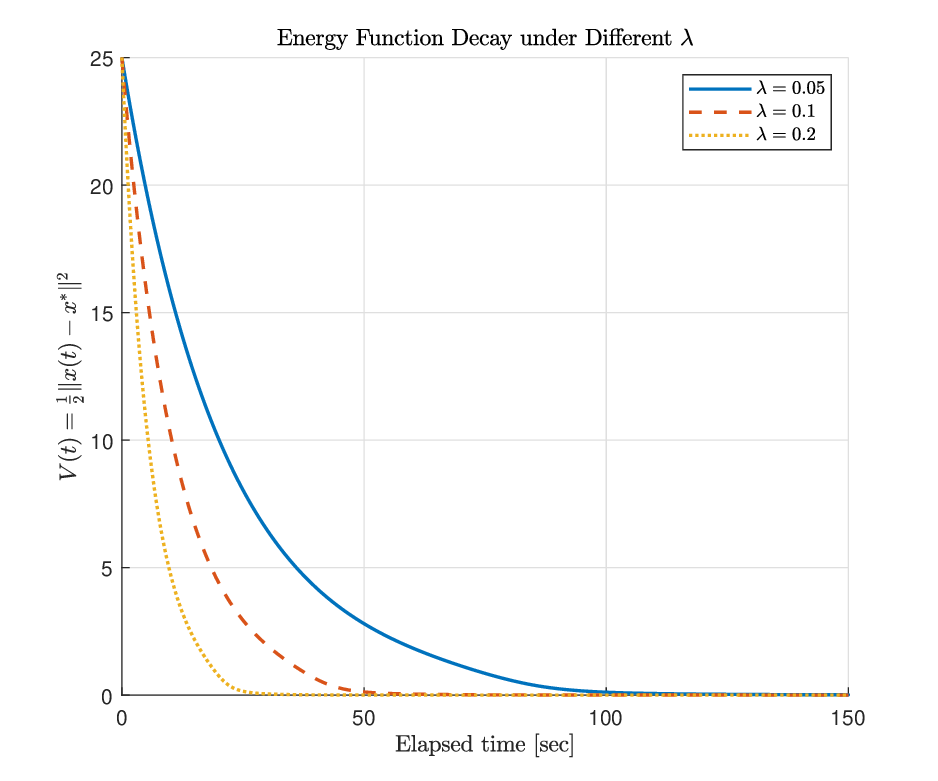}
\end{minipage} \hfill
\begin{minipage}{0.45\linewidth}
    \centering
    \includegraphics[width=\linewidth]{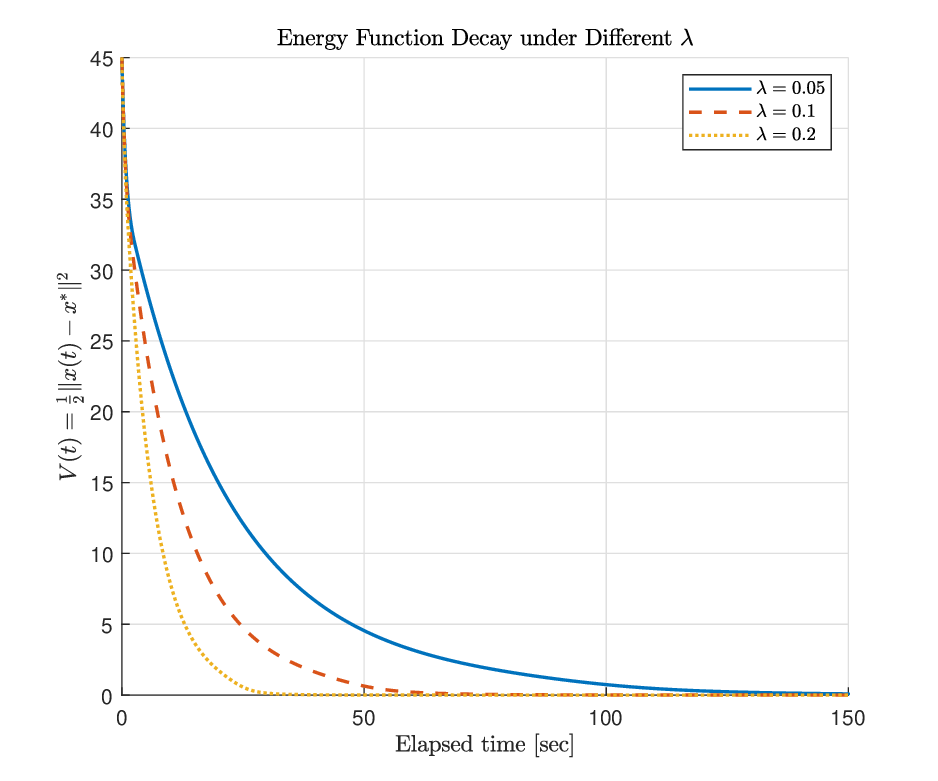}
\end{minipage}
\caption{Decay of the Lyapunov energy function under different step sizes \(\lambda\) of Example 5.1;; Left:Case 1; Right: Case 2.}
\label{fig:energy_decay}
\end{figure}
\vskip 2mm

\textbf{Example 5.2:}\quad In this numerical experiment, we revisit a classical problem originally introduced in \cite{a29}, which is formulated in the context of infinite-dimensional Hilbert spaces. Specifically, we let $E = \ell^2 := \{x=(x_1,x_2,\dots): \sum_{i=1}^\infty |x_i|^2 < \infty\}$, the space of square-summable sequences. Given two real constants $a,b$ satisfying $0 < \frac{b}{2} < a$, we define the constraint set $K := \{x \in \ell^2 : \|x\| \leq a\}$ and the operator $F : \ell^2 \to \ell^2$ by $F(x) = (b - \|x\|) x$. It can be verified that this operator is quasi-monotone and Lipschitz continuous, with the set of solutions $\Omega_D = \{0\}$.
\vskip 2mm 
We compare the performance of our proposed Algorithm 1 against several existing iterative schemes, namely Algorithm 1 from Oyewole et al.~\cite{a30}, Algorithm 1 from Malitsky~\cite{a26}. All algorithms are tested with the squared norm function $f(x) = \frac{1}{2} \|x\|^2$, except when otherwise specified.

\vskip 2mm
\noindent The parameter settings used in each method are as follows:
\vskip 2mm
$\bullet$ For our Algorithm 1: we choose $\lambda_1 = 0.15$, $\mu = 0.8$, $\gamma = 0.9$, and $\psi =\frac{1+\sqrt{5}}{2}$. 
\vskip 2mm
$\bullet$ For Oyewole et al.~\cite{a30} (Algorithm 1): set $\lambda_1 = 0.15$, $\mu = 0.8$, $\delta_k = \frac{1}{k^{1.1}}$, $\beta_k = 1+\delta_k$ and $\psi =\frac{1+\sqrt{5}}{2}$.
\vskip 2mm
$\bullet$ For Malitsky~\cite{a26} (Algorithm 1): use $\lambda_1 = 0.15$ and $\psi =\frac{1+\sqrt{5}}{2}$.
\vskip 2mm
All algorithms are executed until the stopping criterion $E_{k}=\|x_{k+1} - x_k\| < 10^{-5}$ is met, or the maximum number of iterations reaches 1000. We consider the following four parameter settings for $(a, b)$:
\textbf{Case I:} $(a, b) = (2, 3)$; \textbf{Case II:} $(a, b) = (3, 5)$; \textbf{Case III:} $(a, b) = (4, 7)$; \textbf{Case IV:} $(a, b) = (5, 9)$. 
The corresponding numerical results, including iteration counts and computational performance, are summarized in Figure \ref{4-2} and Table \ref{Table-1}.
\begin{figure}[ht]\label{fig3}
\centering
\begin{minipage}{0.45\linewidth}
    \centering
    \includegraphics[width=\linewidth]{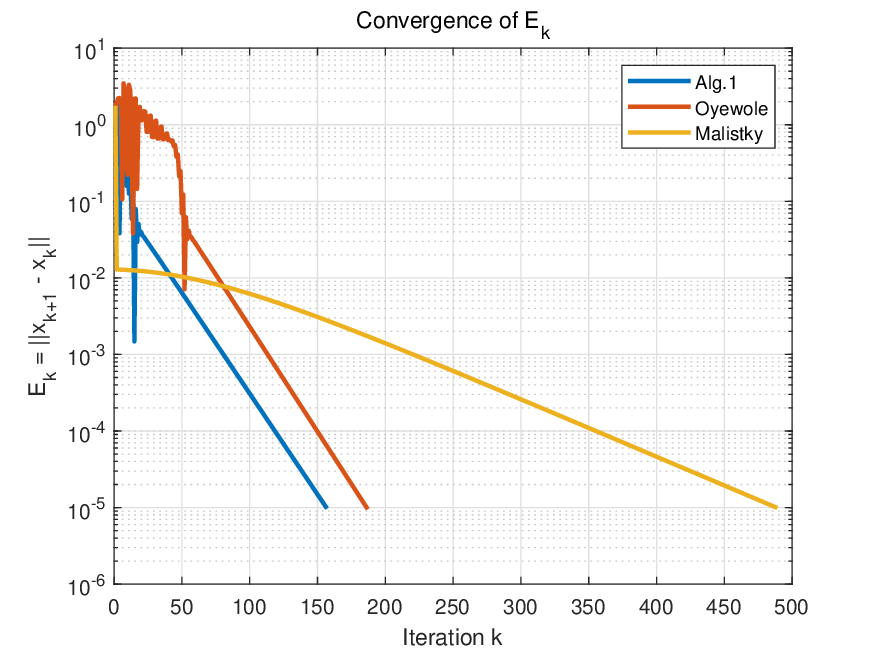}
    \label{fig:z1}
\end{minipage} \hfill
\begin{minipage}{0.45\linewidth}
    \centering
    \includegraphics[width=\linewidth]{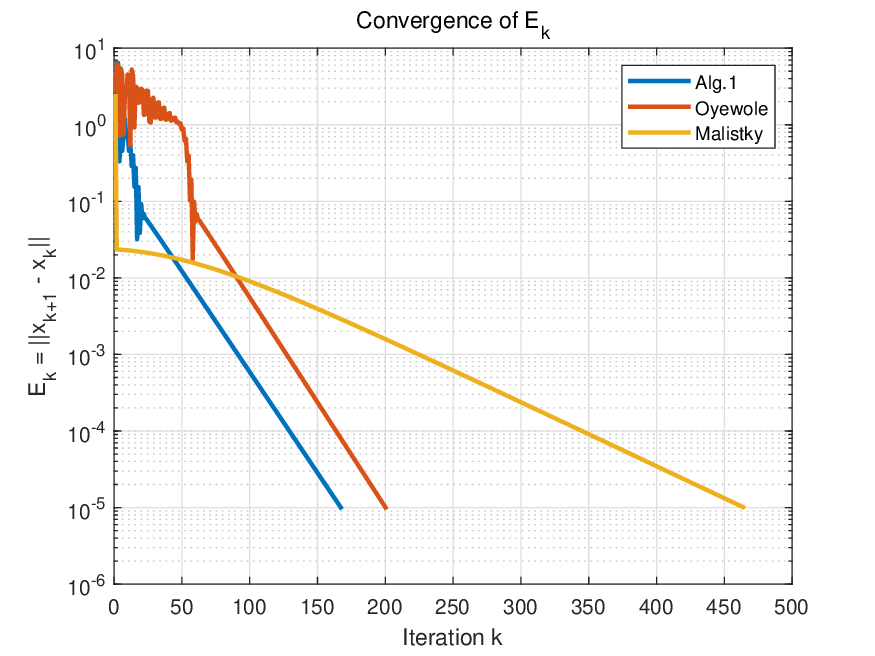}
    \label{fig:z2}
\end{minipage}
\centering
\begin{minipage}{0.45\linewidth}
    \centering
    \includegraphics[width=\linewidth]{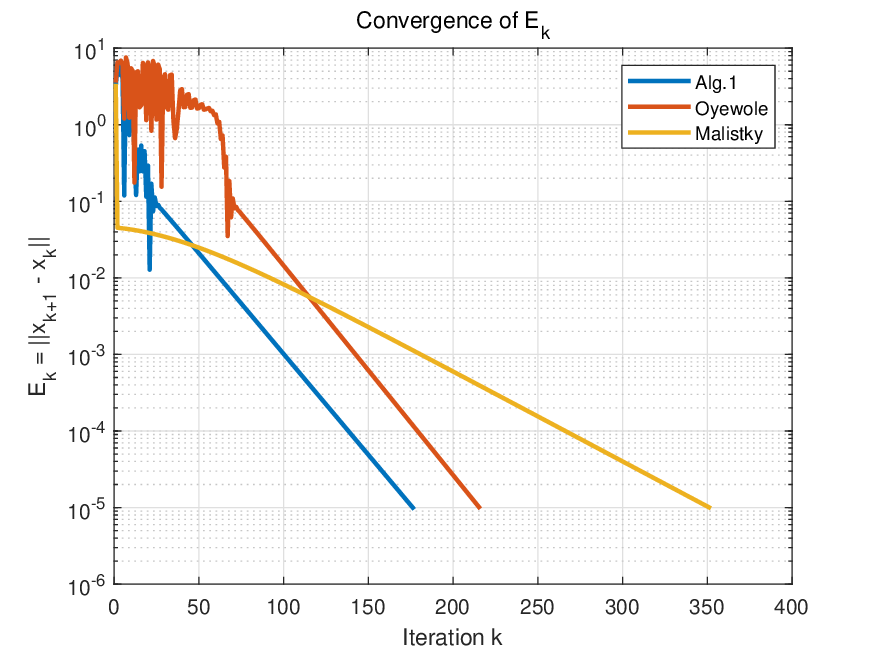}
    \label{fig:z3}
\end{minipage} \hfill
\begin{minipage}{0.45\linewidth}
    \centering
    \includegraphics[width=\linewidth]{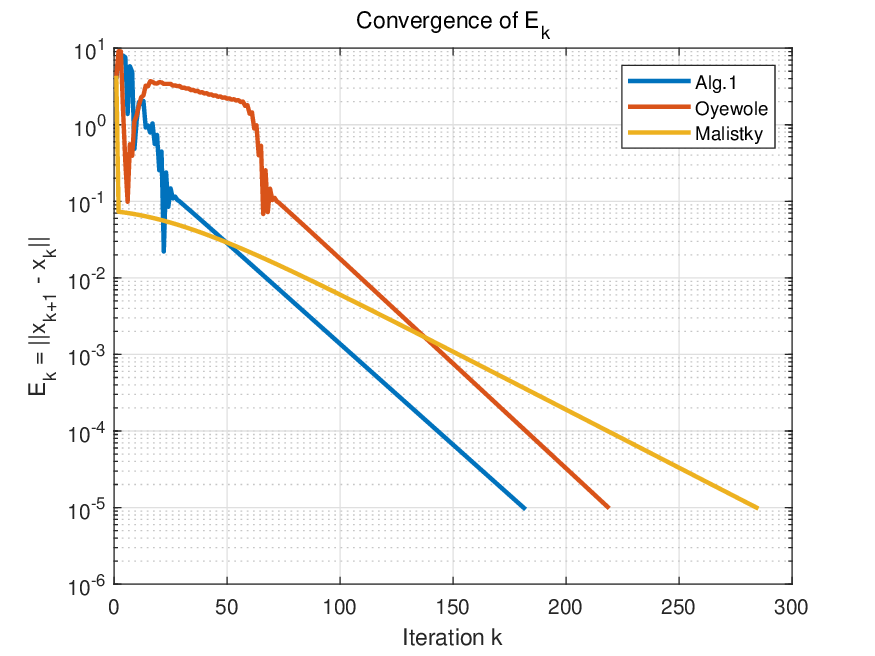}
    \label{fig:z4}
\end{minipage}
\caption{The numerical test results of Example 5.2; Top left:Case I; Top Right: Case II; Bottom left:Case III; Bottom right:Case IV}
\label{4-2}
\end{figure}

\begin{table}[ht]
\centering
\caption{Computation results for Example 5.2}
\renewcommand\arraystretch{1.5}
\begin{tabular}{>{\centering\arraybackslash}m{4.5cm}cccccc}
\toprule
\multirow{2}{*}{} & \multicolumn{3}{c}{No. of iterations} & \multicolumn{3}{c}{CPU time (sec)} \\
\cmidrule(lr){2-4} \cmidrule(lr){5-7}
 & Alg.1 & Oyewole et al. & Malitsky  & Alg.1 & Oyewole et al. & Malitsky \\
\midrule
Case I& 165 & 182 & 479 & 0.0826 & 0.0925 & 0.0969 \\
Case II & 173 & 202 & 420 & 0.0938 & 0.0109 & 0.1048 \\
Case III& 175 & 227 & 354 & 0.1050 & 0.1354 & 0.1869 \\
Case IV & 180 & 220 & 279 & 0.1250 & 0.1354 & 0.18695 \\
\bottomrule
\end{tabular}
\label{Table-1}
\end{table}

\vskip 2mm 
\textbf{Example 5.3} \quad To evaluate the performance of our Algorithm 1 with nonlipschitz operator, we consider a simple traffic equilibrium model with three parallel routes. Let \( x = (x_1, x_2, x_3)^\top \) denote the vector of path flow proportions, constrained to lie in the probability simplex \( \Delta := \{ x \in \mathbb{R}^3_+ : x_1 + x_2 + x_3 = 1 \} \). The cost (or delay) function on each path is defined as
\[
F(x) = \begin{bmatrix}
1 + x_1^q \\
1.5 + x_2^q \\
2 + x_3^q
\end{bmatrix}, \quad \text{with } q = 0.2,
\]
so that the cost of each route is an increasing function of its own flow. The traffic equilibrium corresponds to the solution of the following variational inequality:
\[
\text{Find } x^* \in \Delta \text{ such that } \langle F(x^*), y - x^* \rangle \geq 0, \quad \forall y \in \Delta.
\]
Specifically, \( \frac{\partial F_i}{\partial x_i} = q x_i^{q-1} > 0 \) for all \( x_i \in (0,1) \), while \( \frac{\partial F_i}{\partial x_j} = 0 \) for \( i \neq j \). Hence, the Jacobian of \( F \) is a positive diagonal matrix (but not symmetric), implying that \( F \) is not derived from any scalar potential. The mapping \( F \) is therefore not monotone in the classical sense, nor is it Lipschitz continuous, since \( x_i^{q-1} \to \infty \) as \( x_i \to 0^+ \) for \( q < 1 \). Nevertheless, \( F \) is pseudo-monotone on the simplex \( \Delta \): for any \( x, y \in \Delta \), if \( \langle F(x), y - x \rangle \geq 0 \), then it can be verified that \( \langle F(y), y - x \rangle \geq 0 \) holds. This follows from the fact that each \( F_i \) is strictly increasing in its own variable and independent of the others, which ensures that \( (F_i(y_i) - F_i(x_i))(y_i - x_i) \geq 0 \) for all \( i \), and hence
\[
\langle F(y) - F(x), y - x \rangle = \sum_{i=1}^3 (F_i(y_i) - F_i(x_i))(y_i - x_i) \geq 0.
\]
Thus, although \( F \) is neither symmetric nor Lipschitz, its structure yields pseudo-monotonicity, making it a valid and nontrivial test case for our Algorithm 1. We choose \( \Phi(x) = \sum_{i=1}^3 x_i \log x_i \). The first variant employs our Algorithm 1, while the second variant omits Algorithm 1 without the golden ratio technique. Both methods use adaptive step sizes with parameters \( \gamma = 0.8 \), \( \psi = \frac{1 + \sqrt{5}}{2} \), and \( \mu = 0.5 \), and initialize from \( x_0 = (0.3, 0.4, 0.3)^\top \). The stopping criterion is set to \( \|x_{k+1} - x_k\| < 10^{-4} \) or 800 iterations. 
\begin{figure}[ht]
\centering
\begin{minipage}{0.45\linewidth}
    \centering
    \includegraphics[width=\linewidth]{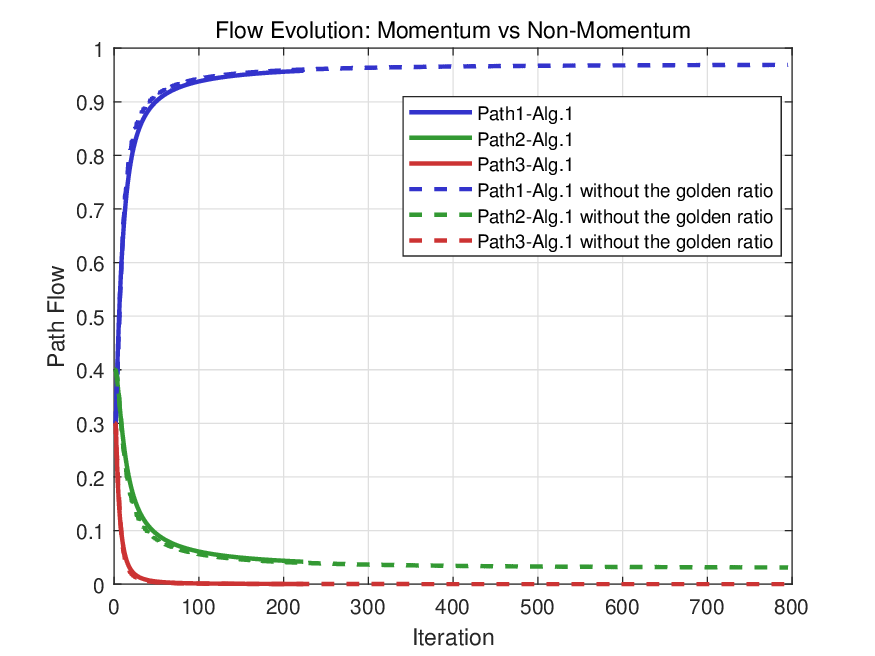}
    \label{fig:e1}
\end{minipage} \hfill
\begin{minipage}{0.45\linewidth}
    \centering
    \includegraphics[width=\linewidth]{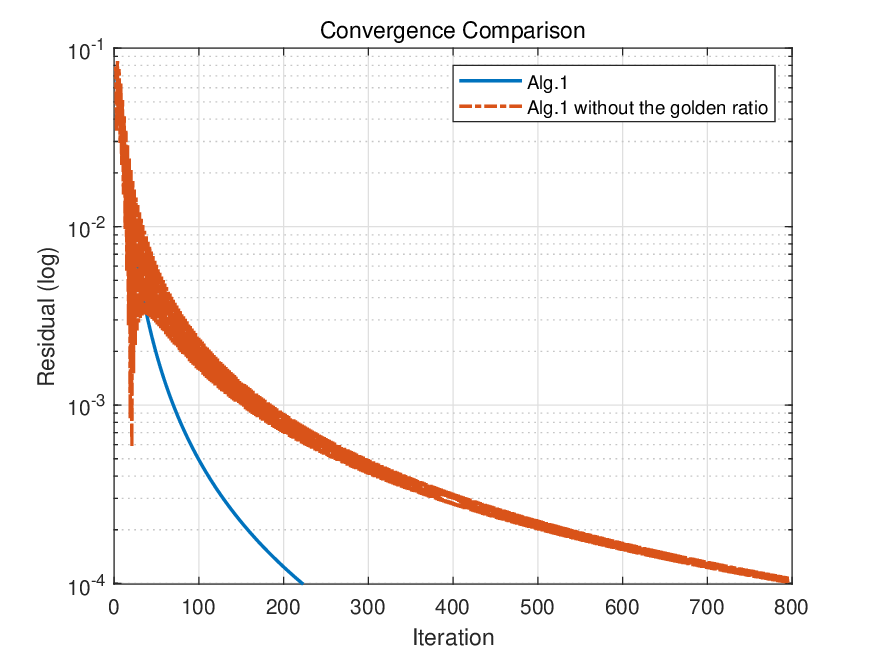}
    \label{fig:e2}
\end{minipage}
\caption{The numerical test results of Example 5.3.}
\label{fig:exp}
\end{figure}
The experimental results are shown in Figure~\ref{fig:exp}. The left panel illustrates the evolution of path flows over iterations. We observe that both methods converge to the same equilibrium allocation, where path 1 dominates due to its lowest base cost.  The right panel shows the residual errors \( \|x_{k+1} - x_k\| \) in logarithmic scale. The momentum method achieves rapid and smooth convergence within about 230 iterations, while the non-momentum variant converges slowly and exhibits oscillatory behavior. This confirms the effectiveness of using the golden ratio extrapolation to accelerate convergence in non-Lipschitz pseudo-monotone settings.

\section{Conclusion}
In this paper, we proposed and analyzed a novel forward-backward-forward (FBF) framework for solving quasimonotone variational inequalities. By extending the classical FBF dynamical system to handle quasimonotone operators, we established weak and strong convergence of the continuous trajectories under significantly relaxed assumptions. Notably, our analysis avoids reliance on strong pseudomonotonicity, sequential weak-to-weak continuity assumptions commonly required in the existing literature. In the discrete setting, we developed a new Bregman-type algorithm that employs golden-ratio-based extrapolation and a nonmonotone adaptive step-size strategy. This algorithm guarantees strong convergence under merely uniform continuity of the operator, thus significantly expanding the applicability of the method to broader classes of problems, including non-Lipschitz and infinite-dimensional settings.
In addition, we established the ergodic (time-average) convergence of the continuous trajectories based on the strong convergence result, offering a refined perspective on the long-term asymptotic behavior of the system.
Numerical experiments on infinite-dimensional and structured variational inequality problems confirm the theoretical properties and practical effectiveness of the proposed method, particularly in challenging scenarios where existing algorithms either fail to converge or require stronger conditions.

\vskip 2mm 
Although this work has yielded several promising results, it also leaves open some important questions that merit further investigation:
\vskip 2mm
(i) The existence and uniqueness of solutions to the proposed discrete dynamical system currently rely on the Lipschitz continuity of the operator \( F \). A natural question arises: can this assumption be weakened to mere uniform continuity? Moreover, while the convergence analysis has been conducted within finite-dimensional Hilbert spaces, it remains an open problem whether similar results hold in the infinite-dimensional setting.
\vskip 2mm
(ii) A fruitful direction is to investigate extensions of the proposed dynamical system to non-autonomous or stochastic settings. This includes analyzing the behavior of the trajectories when the operator \( F \) or the step-size rule depends explicitly on time, or when the system is subject to stochastic perturbations, as commonly encountered in real-world applications. Such generalizations would offer insights into the robustness, stability, and applicability of the method in more realistic scenarios.
\vskip 2mm
(iii) It is worth noting that our current ergodic (time-averaged) convergence results in the continuous-time setting rely on the strong convergence of the trajectory:
\[
x(t) \to x^*, \quad \text{as } t \to \infty.
\]
Whether the time-averaged weak convergence
\[
\bar{x}(T) := \frac{1}{T} \int_0^T x(t)\,dt \quad \rightharpoonup \quad x^*
\]
still holds under the weaker assumption that
\[
x(t) \rightharpoonup x^*
\]
without requiring strong convergence, remains an open question. Addressing this problem may require employing averaged variants of the Opial Lemma in continuous-time settings, along with a refined asymptotic analysis of the trajectory and the variational inequality structure. We leave this interesting question for future exploration.
\vskip 2mm 
These open questions not only highlight the depth and richness of the problem but also point toward fruitful directions for our future work, motivating us to further refine and generalize the proposed approach.

\section*{Declarations}

\begin{itemize}
\item Competing interests: The authors declare that they have no competing interests.
\item Availability of data and materials: The datasets generated during the current study are available from the corresponding author upon reasonable request.
\item Authors' contributions: All authors contributed equally to this work. All authors read and approved the final manuscript.
\item Funding: 
This research were supported by National Natural Science Foundation of China (No. 12261019), Natural Science Basic Research Program Project of Shaanxi Province (No. 2024JC-YBMS-019), the Fundamental Research Funds for the Central Universities and the Innovation Fund of Xidian University (No. YJSJ25009).
\end{itemize}

\end{document}